\documentclass{aart} 

\usepackage{titlesec}
\titleformat{\subsection}[runin]{\normalfont\bfseries}{\thesubsection.}{.5em}{}[.]\titlespacing{\subsection}{0pt}{2ex plus .1ex minus .2ex}{.8em}
\titleformat{\subsubsection}[runin]{\normalfont\itshape}{\thesubsubsection.}{.3em}{}[.]\titlespacing{\subsubsection}{0pt}{1ex plus .1ex minus .2ex}{.5em}

\usepackage[labelfont=sc,font=small,labelsep=period]{caption}
\setlength{\intextsep}{2em}
\setlength{\textfloatsep}{2em}

\usepackage[letterpaper, hmargin=1in, top=1.5in, bottom=1.9in, footskip=0.7in]{geometry}


\usepackage{amsmath} 
\usepackage{amssymb}
\usepackage{amsfonts}
\usepackage{latexsym}
\usepackage{amsthm}
\usepackage{amsxtra}
\usepackage{amscd}
\usepackage{bbm}
\usepackage{mathrsfs}
\usepackage{bm}

\usepackage{graphicx, color}
\usepackage[nottoc,notlof,notlot]{tocbibind} 
\usepackage{cite} 
\usepackage{xspace} 



\flushbottom
\numberwithin{equation}{section}
\numberwithin{figure}{section}
\interfootnotelinepenalty=10000 




\theoremstyle{plain} 
\newtheorem{theorem}{Theorem}[section]
\newtheorem*{theorem*}{Theorem}
\newtheorem{lemma}[theorem]{Lemma}
\newtheorem*{lemma*}{Lemma}

\newtheorem*{corollary*}{Corollary}
\newtheorem{proposition}[theorem]{Proposition}
\newtheorem*{proposition*}{Proposition}
\newtheorem{definition}[theorem]{Definition}
\newtheorem*{definition*}{Definition}

\theoremstyle{definition} 

\newtheorem*{example*}{Example}
\newtheorem{remark}[theorem]{Remark}

\newtheorem*{remark*}{Remark}
\newtheorem*{remarks*}{Remarks}


\newcommand{\f}[1]{\boldsymbol{\mathrm{#1}}} 
\newcommand{\bb}{\mathbb} 
\renewcommand{\cal}{\mathcal} 
 
\newcommand{\fra}{\mathfrak} 
\newcommand{\ol}[1]{\overline{#1} \!\,} 
\newcommand{\wh}{\widehat}
\newcommand{\wt}{\widetilde}


\renewcommand{\P}{\mathbb{P}}
\newcommand{\E}{\mathbb{E}}
\newcommand{\R}{\mathbb{R}}
\newcommand{\C}{\mathbb{C}}
\newcommand{\N}{\mathbb{N}}
\newcommand{\Z}{\mathbb{Z}}


\newcommand{\ee}{\mathrm{e}}
\newcommand{\ii}{\mathrm{i}}
\newcommand{\dd}{\mathrm{d}}
\newcommand{\col}{\mathrel{\vcenter{\baselineskip0.75ex \lineskiplimit0pt \hbox{.}\hbox{.}}}}
\newcommand*{\deq}{\mathrel{\vcenter{\baselineskip0.65ex \lineskiplimit0pt \hbox{.}\hbox{.}}}=}
\newcommand*{\eqd}{=\mathrel{\vcenter{\baselineskip0.65ex \lineskiplimit0pt \hbox{.}\hbox{.}}}}

\renewcommand{\leq}{\leqslant}
\renewcommand{\geq}{\geqslant}
\renewcommand{\epsilon}{\varepsilon}


\newcommand{\ind}[1]{\f 1 (#1)}
\newcommand{\indb}[1]{\f 1 \pb{#1}}

\newcommand{\indBB}[1]{\f 1 \pBB{#1}}

\newcommand{\p}[1]{({#1})}
\newcommand{\pb}[1]{\bigl({#1}\bigr)}
\newcommand{\pB}[1]{\Bigl({#1}\Bigr)}
\newcommand{\pbb}[1]{\biggl({#1}\biggr)}
\newcommand{\pBB}[1]{\Biggl({#1}\Biggr)}

\newcommand{\qb}[1]{\bigl[{#1}\bigr]}

\newcommand{\h}[1]{\{{#1}\}}
\newcommand{\hb}[1]{\bigl\{{#1}\bigr\}}

\newcommand{\hBB}[1]{\Biggl\{{#1}\Biggr\}}

\newcommand{\abs}[1]{\lvert #1 \rvert}
\newcommand{\absb}[1]{\bigl\lvert #1 \bigr\rvert}

\newcommand{\absbb}[1]{\biggl\lvert #1 \biggr\rvert}

\newcommand{\normbb}[1]{\biggl\lVert #1 \biggr\rVert}

\newcommand{\avg}[1]{\langle #1 \rangle}
\newcommand{\avgb}[1]{\bigl\langle #1 \bigr\rangle}


\DeclareMathOperator{\tr}{Tr}
\DeclareMathOperator{\var}{Var}

\DeclareMathOperator{\supp}{supp}

\DeclareMathOperator{\re}{Re}
\DeclareMathOperator{\im}{Im}

\DeclareMathOperator{\dist}{dist}



\begin{document}
\author{L\'aszl\'o Erd\H{o}s\footnote{IST Austria, Am Campus 1, Klosterneuburg A-3400, lerdos@ist.ac.at. On leave from Institute of Mathematics,
University of Munich. Partially supported by SFB-TR 12 Grant of the German Research Council  and by ERC 
Advanced Grant, RANMAT 338804.}
\and Antti Knowles\footnote{ETH Z\"urich, knowles@math.ethz.ch. Partially supported by Swiss National Science Foundation grant 144662.}}
\title{The Altshuler-Shklovskii Formulas for Random Band Matrices I:  the Unimodular Case}
\maketitle

\begin{abstract}
We consider the spectral statistics of large random band matrices
on mesoscopic energy scales. We show that
the correlation function of the local eigenvalue density exhibits  a universal power law behaviour
that differs from the Wigner-Dyson-Mehta statistics. 
This law had  been  predicted  in the physics literature by
Altshuler and Shklovskii \cite{AS}; it
describes the correlations of the eigenvalue density in general metallic samples with weak disorder.
Our  result  rigorously
establishes the Altshuler-Shklovskii formulas for band matrices.
In two  dimensions, where the leading term vanishes owing to an  algebraic 
cancellation, we identify the first non-vanishing term  and show that it  differs substantially
from the prediction of Kravtsov and Lerner \cite{KrLe}.  The proof is given in the current paper and its companion \cite{EK4}. 
\end{abstract}


\section{Introduction} \label{sec:intro}

The eigenvalue statistics of large random Hermitian 
matrices with independent entries are known to exhibit universal behaviour.
Wigner proved \cite{Wig} that the eigenvalue density converges (on the macroscopic scale)  to the semicircle  law  as the
dimension of the matrix tends to infinity. He also observed
that the local statistics of individual eigenvalues (e.g.\ the gap statistics)
are universal, in the sense that they depend only on the
symmetry class of the matrix but are otherwise independent of
the distribution of the matrix entries. In the Gaussian
case, the local spectral statistics were identified by Gaudin, Mehta, and Dyson
\cite{Meh}, who proved that they are governed by the celebrated sine kernel.

In this paper and its companion \cite{EK4}, 
we focus the universality of the eigenvalue density statistics on intermediate, so-called \emph{mesoscopic}, scales, which lie between the macroscopic and the local scales. 
We study
 \emph{random band matrices},  commonly used to model quantum transport in disordered media.
 Unlike the mean-field Wigner matrices, band matrices possess
 a nontrivial spatial structure. 
 Apart from the obvious mathematical interest,
an important  motivation for this question arises from physics,
 namely from the theory of conductance fluctuations  developed by 
Thouless \cite{Th}. 
In the next sections we explain the physical background 
of the problem. Thus, readers mainly interested in the mathematical aspects of our results may skip  much of the introduction.

\subsection{Metal-insulator transition}

According to the Anderson metal-insulator transition
\cite{And}, general disordered quantum systems are believed to fall into one of two very distinctive regimes. 
In the \emph{localized regime} (also called the \emph{insulator regime}), 
physical quantities depending on the position, such
as eigenvectors and resolvent entries, decay on a length scale $\ell$ (called the \emph{localization length}) that is independent
of the system size. The  unitary time evolution generated by the Hamiltonian
remains localized for all times and the local spectral statistics are Poisson.
In contrast,  in the \emph{delocalized regime} (also called  the \emph{metallic regime}),
 the localization length is comparable with the linear system size. 
The overlap of the 
 eigenvectors induces strong correlations in the local 
eigenvalue statistics, which are believed to be universal and to coincide
with those of a Gaussian matrix ensemble of the appropriate symmetry class.
Moreover, the unitary  time evolution generated by the Hamiltonian  is diffusive  for large times. 
Strongly disordered systems are in the localized regime. In the weak disorder regime, the localization 
properties depend on the dimension and on the energy.

Despite compelling theoretical arguments and numerical evidence, the Anderson
metal-insulator transition has been rigorously proved only in a few very special cases.
The basic model is the random Schr\"odinger operator, 
$-\Delta +V$,
  typically defined  on $\R^d$ or on a graph
(e.g.\ on a subset of $\Z^d$). Here $V$ is a random potential with short-range spatial correlations; for instance, in the case of a graph, $V$ is a family of independent random variables indexed by the vertices.
The localized regime is relatively well understood since
the pioneering work of Fr\"ohlich and Spencer \cite{FroSpe, FroSpe2}, followed by an alternative approach by Aizenman and Molchanov \cite{AizMol}.  The Poissonian nature of the local spectral statistics  was proved by Minami \cite{Min}.  On the other hand, the delocalized regime has seen far less progress.  With the exception of the Bethe lattice \cite{Kl, ASW, FHS}, only partial results
are available. They indicate delocalization and quantum diffusion in certain
limiting regimes \cite{ESalY1, ESalY2, ESalY3, EY00}, or in a somewhat different model where the static random potential is replaced with 
 a dynamic phonon field in a thermal state at positive temperature  \cite{FdeR, DRK}.

 Another much studied family of  models describing disordered quantum systems is random matrices.
Delocalization is well understood for random Wigner matrices \cite{ESY2, EYY1}, but, owing to their mean-field character, they are always in the delocalized regime, and hence no phase transition takes place. The local eigenvalue statistics are universal. This fundamental fact about
random matrices, also known as the Wigner-Dyson-Mehta conjecture, has been recently proved
\cite{ESY4, EYY3, EYBull} (see also \cite{TV1} for a partially alternative
argument in the Hermitian case).

\subsection{Mesoscopic statistics}

In a seminal paper \cite{AS}, Altshuler and Shklovskii
computed a new physical quantity: the  variance of the number ${\cal N}_\eta$  of eigenvalues
on a mesoscopic energy scale $\eta$ in $d$-dimensional metallic samples with disorder for $d \leq 3$; here \emph{mesoscopic} refers to scales $\eta$ that are much larger then the typical eigenvalue spacing $\delta$ but much smaller than the total (macroscopic) energy scale of the  system.
Their motivation was to study fluctuations of the conductance in mesoscopic metallic
samples; see also \cite{Alt} and \cite{LS}. The relationship between $\cal N_\eta$
and the conductance is given by a fundamental result of Thouless \cite{Th}, asserting that the
conductance of a sample of linear size $L$  is determined
by the (one-particle) energy levels in an energy band of a specific width $\eta$
around the Fermi energy. In particular, the variance of $\cal N_{\eta}$ directly contributes to the conductance fuctuations.
This specific value of $\eta$ is  given by $\eta=\max\{ \eta_c, T\}$, where
$T$ is the temperature and $\eta_c$ is the \emph{Thouless energy} \cite{Th}. In diffusive models the Thouless energy is defined
as $\eta_c \deq D/L^2$, where $D$ is the diffusion coefficient.  
(In a conductor the dynamics of the particles, i.e.\ the itinerant electrons,
 is typically diffusive.)
The Thouless energy  may also be interpreted as the inverse diffusion time, i.e.\ the time needed for the
 particle to diffuse through the sample.

As it turns out, the mesoscopic linear statistics $\cal N_\eta$ undergo a sharp transition precisely at\footnote{We use the notation $a \asymp b$ to indicate that $a$ and $b$ have comparable size. See the conventions at the end of Section \ref{sec:intro}.} $\eta \asymp \eta_c$. 
For small energy scales, $\eta\ll \eta_c$, Altshuler and Shklovskii found that the variance of $\cal N_\eta$ behaves according to\footnote{We use the notation $\avg{\cdot \,; \cdot}$ to denote the covariance and abbreviate $\var X \deq \avg{X \, ; X}$. See \eqref{def_avg} below.}
\begin{equation}\label{ASvar1}
\var \cal N_\eta \;\asymp\; \log \cal N_\eta \;\asymp\; \log L\,,
\end{equation}
as predicted by the Dyson-Mehta statistics \cite{DM}. The unusually small variance is
due to the strong correlations among eigenvalues (arising from  level repulsion).
In the opposite regime, $\eta\gg \eta_c$, the variance is typically much larger, and behaves according to 
\begin{equation}\label{ASvar2}
\var \cal N_\eta \;\asymp\; (\eta/\eta_c)^{d/2} \;=\; L^d (\eta/D)^{d/2} \qquad (d = 1,2,3)\,.
\end{equation}

The threshold $\eta_c$ may be understood by introducing the concept of (an energy-dependent) \emph{diffusion length} $\ell_\eta$,  which is the typical spatial scale on which the off-diagonal matrix entries of those observables decay
that live on an energy scale $\eta$ (e.g.\ resolvents whose spectral parameters have imaginary part $\eta$).
Alternatively, $\ell_\eta$ is the linear scale of an initially localized state evolved up to time $\eta^{-1}$. 
The diffusion length is related to the localization length $\ell$ through $\ell = \lim_{\eta \to 0} \ell_\eta$. 
Assuming that the dynamics of the quantum particle can be described by a classical diffusion
process, one can show that $\ell_\eta \asymp\sqrt{D/\eta}$
and the relation  $\eta\ll \eta_c = D/L^2$ may be written as 
$L\ll \ell_\eta$. The physical interpretation is that the sample is 
so small that the system is essentially 
mean-field from the point of view of observables on the energy scale $\eta$, 
 so that the spatial structure and dimensionality of the system are immaterial. 
The opposite regime $\eta\gg \eta_c$ corresponds to large samples, $L \gg \ell_\eta$,
where the behaviour of the system can be approximated by a diffusion that has not reached the boundary of the sample. 
These two regimes are commonly referred to as \emph{mean-field} and \emph{diffusive} regimes, respectively.

A similar transition occurs if one considers  the correlation of the number of eigenvalues
 $\cal N_\eta(E_1)$ and  $\cal N_\eta(E_2)$  around
two distinct energies $E_1 < E_2$ whose separation is much larger than the energy window $\eta$ (i.e.\ $E_2-E_1 \gg \eta$). For small samples, $\eta \ll \eta_c$, the correlation decays according to
\begin{equation}\label{AScorr1}
 \avgb{\cal N_\eta(E_1) \,; \cal N_\eta(E_2)}  \;\asymp\; (E_2-E_1)^{-2}\,.
\end{equation}
This decay holds for systems both with and without time reversal symmetry.
The decay \eqref{AScorr1}  is in agreement with the Dyson-Mehta statistics, which in the complex Hermitian case  (corresponding to a system without time reversal symmetry)  predict a correlation
$$
\pbb{\frac{\sin\pb{(E_2-E_1)/\delta}}{(E_2-E_1)/\delta}}^2
$$
for highly localized observables on the scale $\eta \asymp \delta$.
 For mesoscopic scales, $\eta \gg \delta$, the oscillations in the numerator are averaged out and may be replaced with a positive constant to yield 
\eqref{AScorr1}.  A similar formula with the same decay holds for the real symmetric case (corresponding to a system with time reversal symmetry). 
On the other hand, for large samples, $\eta\gg \eta_c$, we have
\begin{equation}\label{AScorr2}
 \avgb{{\cal N}_\eta(E_1)\,; {\cal N}_\eta(E_2)}  \;\asymp\;
(E_2-E_1)^{-2+d/2} \qquad  (d=1,3)\,;
\end{equation}
for $d=2$ the correlation vanishes to leading order.
The power laws in the energies $\eta$ and $E_2 - E_1$ given in  \eqref{ASvar2} and \eqref{AScorr2}  respectively 
 are called the
\emph{Altshuler-Shklovskii formulas}. They express the 
 variance  and the correlation of the density of states  in the regime where the diffusion approximation is valid and the spatial extent of the diffusion, $\ell_\eta$, is much less than the system size $L$. 
In contrast, the mean-field formulas \eqref{ASvar1} and \eqref{AScorr1} describe the situation where 
the diffusion has reached equilibrium. Note that the behaviours \eqref{ASvar2} and \eqref{AScorr2} 
as well as \eqref{ASvar1} and \eqref{AScorr1} are very different from the ones obtained if the 
distribution of the eigenvalues were governed by Poisson statistics; in that case, for instance,
 \eqref{AScorr1} and \eqref{AScorr2} would be zero.

From a mathematical point of view, the significance of these mesoscopic quantities 
is that their statistics are  amenable to  rigorous analysis
even in the  delocalized  regime.
In this paper we demonstrate this by proving the Altshuler-Shklovskii formulas for random band matrices.

\subsection{Random band matrices}

We consider $d$-dimensional random band matrices,
which interpolate between random Schr\"odinger operators and mean-field Wigner matrices
by varying the band width $W$; see \cite{Spe} for an overview of this idea. These matrices represent 
 quantum systems in a $d$-dimensional discrete box of side length $L$, where
the quantum transition probabilities are random and their range is of order $W \ll L$.
We scale the matrix so that its spectrum is bounded, i.e.\ the macroscopic energy scale is of order 1, and hence the eigenvalue spacing is
of order $\delta\asymp L^{-d}$. Band matrices exhibit diffusion in all dimensions $d$, with a
 diffusion coefficient
$D\asymp W^2$; see \cite{FM} for a physical argument in the general case and \cite{EK1, EK2}
 for a proof  up to certain large time scales. 
In \cite{EKYY3} it was  showed that the  resolvent entries 
with spectral parameter $z= E+\ii\eta$ decay exponentially on a scale $\ell_\eta\asymp W/\sqrt{\eta}$,
as long as this scale is smaller than the system size, $W/\sqrt{\eta} \ll L$.
(For technical reasons the proof is valid only if $L$ is not too large, $L\ll W^{1+d/4}$.)
The resolvent entries do not decay if $W/\sqrt{\eta} \gg L$, in which case the system is in the mean-field regime for observables living  on energy scales of order $\eta \ll \eta_c$.
Notice that the crossover at $W/\sqrt{\eta} \asymp L$  corresponds exactly to the crossover at 
 $\eta \asymp \eta_c$ mentioned above.

\subsection{Outline of results}

Our main result is the proof of the formulas \eqref{ASvar2} (with $D=W^2$) and \eqref{AScorr2}
for $d$-dimensional band matrices for $d\leq 3$; 
we also obtain similar results for $d=4$, 
 where the powers of $\eta$ and $E_2 - E_1$ are replaced with a logarithm. 
This rigorously justifies the asymptotics of Silvestrov \cite[Equation (40)]{Sil}, which in turn
reproduced the earlier result of \cite{AS}.  
For technical reasons, we have to restrict ourselves to the
regime $\eta \gg W^{-d/3}$. For convenience, we also assume that
 $L \gg W^{1+d/6}$, which guarantees that $L\gg \ell_\eta$ (or, equivalently, $\eta\gg \eta_c$).
Hence we work in the diffusive regime. However, our method may be easily extended to the case $L\ll \ell_\eta$ as well (see Remark \ref{rem: diffusion} and Section \ref{sec:Wigner} below).
We also show that for $d\geq 5$ the universality of the formulas \eqref{ASvar2} and \eqref{AScorr2} 
 breaks down, and the variance and the correlation functions of ${\cal N}_\eta$  depend on
the detailed structure of the band matrix. We also compute the leading correction to the density-density correlation. In summary, we find that for $d = 1,2$ the leading and subleading terms are universal, for $d = 3,4$ only the leading terms are universal, and for $d \geq 5$ the density-density correlation is not universal.

The case $d=2$ is special, since the coefficient of the 
leading term in \eqref{AScorr2} vanishes owing to an algebraic  cancellation.
The first non-vanishing term was predicted in \cite{KrLe}.
We rigorously identify this term in the regime $E_2-E_1 \gg \eta \gg W^{-2/3}$, 
and find a substantial discrepancy between it and the prediction of \cite{KrLe}.

For an outline of our proof, and the relation between this paper and its companion \cite{EK4}, see Section \ref{sec:struc}.

\subsection{Summary of previous related results}

Our analysis is valid in the mesoscopic regime, i.e.\ when $\delta \ll \eta \ll 1$,
and concerns only density fluctuations.
For completeness, we mention what was previously known in this and other regimes.

\subsubsection*{Macroscopic statistics}
In the macroscopic regime,  $\eta \asymp 1$, the quantity $\cal N_\eta$ should fluctuate on the scale $(L/W)^{d/2}$ according to \eqref{ASvar2}.
For the Wigner case, $L=W$, it has been proved that a smoothed version of  $\cal N_\eta$, 
 the linear statistics of eigenvalues $\sum_i \phi(\lambda_i) = \tr \phi(H)$,
 is asymptotically Gaussian. The first result in this direction for analytic $\phi$
was given in \cite{SS2}, and this was later extended by several authors to more general test functions; see \cite{SoWo} for the latest result.
The first central limit theorem  for matrices with a nontrivial spatial structure and for 
polynomial test functions was proved in \cite{AZ}.  Very recently, it was proved in \cite{SoshLi} for one-dimensional band matrices that, provided that $\phi\in C^1(\R)$, the quantity $\tr \phi(H)$ is asymptotically Gaussian 
with variance of order $(L/W)^{d/2}$.  For a complete list of references in this direction, see \cite{SoshLi}.

\subsubsection*{Mesoscopic statistics}

The asymptotics \eqref{AScorr1} in the completely 
mean-field case, corresponding to Wigner matrices (i.e.\ $W=L$ so that $\eta_c\asymp 1$), 
was  proved
 in \cite{Kho1, Kho2}; see the
remarks following Theorem~\ref{thm: Wigner} for more details  about this work.  
We note  that the formula \eqref{AScorr2} for random band matrices with $d=1$ was derived
in \cite{Ayadi}, using an unphysical double limit procedure,  in which the limit $L \to \infty$ was first computed for a fixed $\eta$, and subsequently the limit of small $\eta$ was taken.
Note that the mesoscopic  correlations cannot be recovered after the limit $L \to \infty$. Hence the result of \cite{Ayadi} describes only the macroscopic, and not the mesoscopic, correlations.

\subsubsection*{Local spectral statistics}

Much less is known about the local spectral statistics of random band matrices, even for $d=1$.
The Tracy-Widom law at the spectral edge was proved in \cite{So1}. Based on a computation of the 
localization length, the metal-insulator
transition is predicted to occur at $W^2\asymp L$; see \cite{FM} for a non-rigorous 
argument and \cite{Sch, EK1, EK2, EKYY3} for the best currently known lower and upper bounds.
 Hence,  the local spectral statistics are expected to be governed by the sine kernel
 from random matrix theory in the regime $W^2\gg L$.
Very recently, the sine kernel was proved \cite{Scher13} for a 
special  Hermitian Gaussian random band matrix with band width $W$ comparable with $L$.
Universality for a more general class of band matrices but with an additional tiny 
mean-field component was proved in \cite{EKYY4}. We also mention that
 the local correlations of determinants of a special  Hermitian Gaussian random band matrix
have been shown to follow the sine kernel \cite{Scher12}, up to the expected threshold $L\lesssim W^2$.

\subsection{Transition to Poisson statistics}

The diagrammatic calculation of \cite{AS}
uses the diffusion approximation, and formulas \eqref{ASvar1}--\eqref{AScorr2} are
 supposed to be valid in  the delocalized  regime. Nevertheless, our results also hold in the localized regime, in particular
even $d=1$ and for $L\gg W^8$, in which case the eigenvectors are known to be localized \cite{Sch}.
In this regime, and for $\eta\gg W^{-1/3}$, we also prove \eqref{ASvar2} and \eqref{AScorr2}.
Both formulas show  that Poisson statistics do not hold on large mesoscopic scales,
despite the system being in the localized regime.  Indeed, if $\cal N_\eta$ were Poisson-distributed, then we would have
$\var {\cal N_\eta} \asymp {\cal N_\eta} \asymp L\eta$. On the other hand, \eqref{ASvar2} gives $\var \cal N_\eta \asymp L\sqrt{\eta}/W$. We conclude that the prediction of \eqref{ASvar2} for the magnitude of $\var \cal N_\eta$ is much smaller than that predicted by Poisson statistics provided that $\eta\gg W^{-2}$. 

The fact that the Poisson statistics breaks down on
mesoscopic scales is not surprising. Indeed, the basic intuition
behind the  emergence of  Poisson statistics is that eigenvectors belonging to different eigenvalues
are exponentially localized on a scale $\ell \asymp W^2$, typically at different
spatial locations. Hence the associated eigenvalues are independent.
For larger $\eta$, however, the observables depend on many eigenvalues,
which exhibit nontrivial correlations since the supports of their eigenvectors overlap. A simple counting argument 
shows that such overlaps become significant if $\eta \gg 1/\ell$, 
 at which point correlations are expected to develop. In other words, we expect a transition
 to/from Poisson statistics at $\eta \asymp 1/\ell$. 
In the previous paragraph, we noted that \eqref{ASvar2} predicts a transition in the behaviour
 of $\cal N_\eta$ to/from Poisson statistics at $\eta \asymp W^{-2}$. Combining these observations,
 we therefore expect a transition to/from Poisson statistics for $\ell \asymp W^2$. This argument predicts the correct localization length $\ell \asymp W^2$ 
without resorting to Grassmann integration. It remains on a heuristic level, however, since our results
do not cover the full range $\eta \gg W^{-2}$.  We note that this argument may also be applied to $d \geq 2$, in which case it predicts the absence of a transition provided that $W \gg 1$.

The main conclusion of our results is that the local eigenvalue statistics, characterized
by either Poisson or sine kernel statistics, do not in general extend to
mesoscopic scales.  On mesoscopic scales, a 
different kind of universality emerges,
 which is expressed by the Altshuler-Shklovskii formulas. 

\subsubsection*{Conventions}
We use $C$ to denote a generic large positive constant, which may depend 
on some fixed parameters and whose value may change from one expression 
to the next. Similarly, we use $c$ to denote a generic small positive constant. 
We use $a \asymp b$ to mean $c a \leq b \leq C a$ for some constants $c,C > 0$. 
Also, for any finite set $A$ we use $\abs{A}$ to denote the cardinality of $A$. If the implicit constants in the usual notation $O(\cdot)$ depend on some parameters $\alpha$, we sometimes indicate this explicitly by writing $O_\alpha(\cdot)$.

\subsubsection*{Acknowledgements} We are very grateful to Alexander Altland for
detailed discussions on the physics of the problem and for providing references.

\section{Setup and Results} \label{sec: setup}

\subsection{Definitions and assumptions} \label{sec:setup1}
Fix $d \in \N$, the physical dimension of
the configuration space. For $L \in \N$ we define the discrete torus of size $L$ 
\begin{equation*}
\bb T \;\equiv\; \bb T_L^d \;\deq\; \pb{[-L/2, L/2) \cap \Z}^d\,,
\end{equation*}
and abbreviate
\begin{equation}\label{Ndef}
N \;\deq\; \abs{\bb T_L} \;=\; L^d.
\end{equation}
Let $1 \ll W \leq  L$ denote the band width, and define the deterministic matrix $S = (S_{xy})$ through
\begin{equation} \label{step S}
S_{xy} \;\deq\; \frac{\ind{1 \leq \abs{x - y} \leq W}}{M - 1}\,, \qquad M \;\deq\; \sum_{x \in \bb T} \ind{1 \leq \abs{x} \leq W}\,,
\end{equation}
where $\abs{\cdot}$ denotes the periodic Euclidean norm on $\bb T$, i.e.\ $\abs{x} \deq \min_{\nu \in \Z^d} \abs{x + L \nu}_{\Z^d}$. Note that
\begin{equation} \label{link between M and W}
M \;\asymp\; W^d\,.
\end{equation}
The fundamental parameters of our model are the linear dimension of the torus, $L$, and the band width, $W$. The quantities $N$ and $M$ are introduced for notational convenience, since most of our estimates depend naturally on $N$ and $M$ rather than $L$ and $W$. We regard $L$ as the independent parameter, and $W \equiv W_L$ as a function of $L$.

Next, let $A = A^* = (A_{xy})$ be a Hermitian random matrix whose upper-triangular\footnote{We introduce an arbitrary and immaterial total ordering $\leq$ on the torus $\bb T$.} entries $(A_{xy} \col x \leq y)$ are independent random variables with zero expectation. We consider two cases.
\begin{itemize}
\item 
\emph{The real symmetric  case ($\beta = 1$)}, where $A_{xy}$ satisfies $\P(A_{xy} = 1) = \P(A_{xy} = -1) = 1/2$.
\item
\emph{The complex Hermitian case ($\beta = 2$)}, where $A_{xy}$ is uniformly distributed on the unit circle $\bb S^1 \subset \C$.
\end{itemize}
Here the index $\beta = 1,2$ is the customary symmetry index of random matrix theory.

We define the \emph{random band matrix} $H = (H_{xy})$ through
\begin{equation}\label{HA}
H_{xy} \;\deq\; \sqrt{S_{xy}} \, A_{xy}\,.
\end{equation}
Note that $H$ is Hermitian and $\abs{H_{xy}}^2 = S_{xy}$, i.e.\ $\abs{H_{xy}}$ is deterministic. Moreover, we have for all $x$
\begin{equation} \label{normalization of S}
\sum_{y} S_{xy} \;=\; \frac{M}{M - 1}\,.
\end{equation}
With this normalization, as $N, W\to \infty$ the bulk of the spectrum of  $H/2$  lies in  $[-1,1]$ 
and the eigenvalue density is given by the Wigner semicircle law with density
\begin{equation} \label{def_sc_density}
\nu(E) \;\deq\; \frac{2}{\pi}\sqrt{1-E^2} \qquad \text{for} \quad E \in [-1,1]\,.
\end{equation}

Let $\phi$ be a smooth, integrable, real-valued function on $\R$ satisfying $\int \phi(E) \, \dd E \neq 0$. 
We call such functions $\phi$ \emph{test functions}. We also require that our test functions $\phi$ satisfy one of the two following conditions.
\begin{itemize}
\item[\textbf{(C1)}]
$\phi$ is the Cauchy kernel
\begin{equation} \label{Cauchy}
\phi(E) \;=\; \im \frac{2}{E - \ii} \;=\; \frac{2}{E^2 + 1}\,.
\end{equation}
\item[\textbf{(C2)}]
For every $q > 0$ there exists a constant $C_q$ such that
\begin{equation} \label{non_Cauchy}
\abs{\phi(E)}  \;\leq\; \frac{C_q}{1 + \abs{E}^q}\,.
\end{equation}
\end{itemize}

A typical example of a test function $\phi$ satisfying \textbf{(C2)} is the Gaussian $\phi(E) = \sqrt{2 \pi} \, \ee^{-E^2 / 2}$. We introduce the rescaled test function $\phi^\eta(E) \deq \eta^{-1} \phi(\eta^{-1} E)$. We shall be interested in correlations of observables depending on $E \in (-1,1)$ of the form
\begin{equation*}
Y^\eta_\phi(E) \;\deq\; \frac{1}{N} \sum_i \phi^\eta(\lambda_i - E) \;=\; \frac{1}{N} \tr \phi^\eta (H/2 - E)\,,
\end{equation*}
where $\lambda_1, \dots, \lambda_N$ denote the eigenvalues of $H/2$. 
(The factor $1/2$ is a mere convenience, chosen because, as noted above, the asymptotic spectrum of $H/2$ is the interval $[-1,1]$.)
The quantity $Y^\eta_\phi(E)$ 
is the smoothed local density of states around the energy $E$ on the scale $\eta$.
We always choose
\begin{equation*}
\eta \;=\; M^{-\rho}
\end{equation*}
for some fixed $\rho \in (0,1/3)$, and we frequently drop the index $\eta$ from our notation. The strongest
results are for large $\rho$, so that one should think of $\rho$ being slightly less than $1/3$. 

We are interested in the correlation function of the local densities of states, $Y^\eta_{\phi_1}(E_1)$ and  $Y^\eta_{\phi_2}(E_2)$, around two energies
$E_1 \leq E_2$. We shall investigate two regimes: $\eta \ll E_2 - E_1$ and $E_1 = E_2$. In the former regime, 
we prove that the correlation decay in the energy difference $E_2-E_1$ is universal 
(in particular, independent of $\eta$, $\phi_1$, and $\phi_2$), and we compute the correlation function explicitly. In the latter regime, we prove that the variance has a universal dependence on $\eta$, and depends on $\phi_1$ and $\phi_2$ via their inner product in a homogeneous Sobolev space.

The case \textbf{(C2)} for our test functions is the more important  one, since we are typically interested in the statistics of eigenvalues contained in an interval of size $\eta$. The Cauchy kernel from the case \textbf{(C1)} has a heavy tail, which introduces unwanted correlations arising from the overlap of the test functions and not from the long-distance correlations that we are interested in.
Nevertheless, we give our results also for the special case \textbf{(C1)}. We do this for two reasons. First, the case \textbf{(C1)} is pedagogically useful,  since it results in a considerably simpler computation of the main term 
 (see \cite[Section 3]{EK4} for more details). 
Second, the case \textbf{(C1)} is often the only one considered in the physics literature (essentially because it corresponds to the imaginary part of the resolvent of $H$). Hence, our results in particular decouple the correlation effects arising from the heavy tails of the test functions from those arising from genuine mesoscopic correlations. As proved in Theorem \ref{thm: Theta 1} below, the effect of the heavy tail is only visible in the leading nonzero corrections for $d = 2$.

For simplicity, throughout the following we assume that both of our test functions satisfy \textbf{(C1)} or both satisfy \textbf{(C2)}. 
Since the covariance is bilinear,
one may also consider more general test functions that are linear combinations of the cases \textbf{(C1)} and \textbf{(C2)}.

\begin{definition}
Throughout the following we use the quantities $E_1, E_2 \in (-1,1)$ and
\begin{equation*}
E \;\deq\; \frac{E_1 + E_2}{2}\,, \qquad \omega \;\deq\; E_2 - E_1
\end{equation*}
interchangeably. Without loss of generality we always assume that $\omega \geq 0$.
\end{definition}

For the following we choose and fix a positive constant $\kappa$. We always assume that
\begin{equation} \label{D leq kappa}
E_1, E_2 \;\in\; [-1 + \kappa, 1 - \kappa] \,, \qquad \omega \;\leq\; c_*
\end{equation}
for some small enough positive constant $c_*$ depending on $\kappa$. These restrictions are required since the nature of the correlations changes near the spectral edges $\pm 1$. Throughout the following we regard the constants $\kappa$ and $c_*$ as fixed and do not track the dependence of our estimates on them.

We now state our results on the density-density correlation for band matrices in the diffusive
regime (Section \ref{sec:band_results}). The proofs are given in the current paper and its companion \cite{EK4}.  As a reference, we also state similar results for Wigner matrices, corresponding to the mean-field regime (Section \ref{sec:Wigner}).

\subsection{Band matrices} \label{sec:band_results}

Our first theorem gives the leading behaviour of the density-density correlation function in terms of a function $\Theta^\eta_{\phi_1, \phi_2}(E_1, E_2)$, which is explicit but has a complicated form.
In the two subsequent theorems we determine the asymptotics of this function in two physically relevant regimes, where its form simplifies substantially. We use the abbreviations
\begin{equation} \label{def_avg}
\avg{X} \;\deq\; \E X\,, \qquad \avg{X \mspace{1mu} ; Y} \;\deq\; \E (XY) - \E X \, \E Y\,.
\end{equation}

\begin{theorem}[Density-density correlations] \label{thm: main result}
Fix $\rho \in (0, 1/3)$ and $d \in \N$, and set $\eta \deq M^{-\rho}$. Suppose that the test functions $\phi_1$ and $\phi_2$ satisfy either both \textbf{(C1)} or both \textbf{(C2)}.
Suppose moreover that
\begin{equation} \label{LW_assump}
W^{1 + d/6} \;\leq\; L \;\leq\; W^C
\end{equation}
for some constant $C$.

Then there exist a constant $c_0 > 0$ and a function $\Theta_{\phi_1,\phi_2}^\eta(E_1, E_2)$ -- which is given explicitly in \eqref{def:theta} and \eqref{common expression for V(D)} below, and whose asymptotic behaviour is derived in Theorems \ref{thm: Theta 2} and \ref{thm: Theta 1} below -- such that, for any $E_1, E_2$ satisfying \eqref{D leq kappa} for small enough $c_* > 0$,  the local density-density correlation satisfies
\begin{equation} \label{EY_result}
\frac{\avg{Y^\eta_{\phi_1}(E_1) \, ; Y^\eta_{\phi_2}(E_2)}}{\avg{Y^\eta_{\phi_1}(E_1)} \avg{Y^\eta_{\phi_2}(E_2)}} \;=\; \frac{1}{(LW)^d} \pB{\Theta_{\phi_1,\phi_2}^\eta(E_1,E_2) + O \pb{M^{-c_0} R_2(\omega + \eta)}}\,,
\end{equation}
where we defined
\begin{equation} \label{def_R2}
R_2(s) \;\deq\; 1 + \ind{d = 1} s^{-1/2} + \ind{d = 2} \abs{\log s}\,.
\end{equation}

Moreover, if $\phi_1$ and $\phi_2$ are analytic in a strip containing the real axis (e.g.\ as in the case \textbf{(C1)}), we may replace the upper bound $L \leq W^C$ in \eqref{LW_assump} $L \leq \exp(W^c)$ for some small constant $c > 0$.
\end{theorem}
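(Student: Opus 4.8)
The proof is distributed over the present paper and its companion \cite{EK4} (see the overview in Section \ref{sec:struc}). The plan is to reduce the density--density correlation to a correlation of traces of resolvents, to expand the latter into a leading ``diffusive'' term plus controllable errors, and to identify that leading term with the explicit function $\Theta_{\phi_1,\phi_2}^\eta$. For the reduction, write $G(z) \deq (H/2 - z)^{-1}$ and let $m \equiv m(z)$ be the Stieltjes transform of the semicircle law \eqref{def_sc_density}. In the case \textbf{(C1)} one has directly $Y^\eta_\phi(E) = \tfrac{2}{N}\,\im \tr G(E + \ii\eta)$, so $\avg{Y^\eta_{\phi_1}(E_1)\,;Y^\eta_{\phi_2}(E_2)}$ is a linear combination of imaginary parts of $\avgb{\tr G(z_1)\,;\tr G(z_2)}$ at the points $z_j = E_j + \ii\eta$. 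In the case \textbf{(C2)} I would use a Helffer--Sj\"ostrand formula $Y^\eta_\phi(E) = \tfrac{1}{\pi N}\int_\C \partial_{\bar w}\wt\phi^\eta(w - E)\,\tr G(w)\,\dd^2 w$ with an almost-analytic extension $\wt\phi^\eta$ of $\phi^\eta$; since $\partial_{\bar w}\wt\phi^\eta$ vanishes to high order on the real axis, the integral is effectively carried by $\im w \asymp \eta$, and the covariance reduces to a double integral of $\avgb{\tr G(z_1)\,;\tr G(z_2)}$ over $z_1,z_2$ with $\im z_j \asymp \eta$ and $\abs{z_1 - z_2} \lesssim \omega + \eta$. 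In either case it remains to expand $\avgb{\tr G(z_1)\,;\tr G(z_2)}$ uniformly in this range of spectral parameters.

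The expansion is the technical core, carried out in \cite{EK4}. Its input is the local law for band matrices of \cite{EKYY3}: $G_{xy}(z) \approx \delta_{xy}m(z)$, with fluctuation control and exponential off-diagonal decay on the scale $\ell_\eta \asymp W/\sqrt\eta$, which is $\ll L$ precisely because of the lower bound $L \geq W^{1+d/6}$ in \eqref{LW_assump} (the diffusive regime $\eta \gg \eta_c$). Iterating the self-consistent (Dyson-type) equation for $G$, the fluctuating parts of $\tr G(z_1)$ and $\tr G(z_2)$ couple only through the variance profile $S$; resumming the resulting chains of couplings (``ladder diagrams'') yields a leading contribution built from the propagator $\pb{1 - m(z_1)m(z_2)S}^{-1}$, and differentiating the associated generating expression in $z_1$ and $z_2$ and substituting it into the integrals of the previous step produces exactly the function $\Theta_{\phi_1,\phi_2}^\eta(E_1,E_2)$ defined in \eqref{def:theta}--\eqref{common expression for V(D)}. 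All remaining diagrams must then be shown to contribute at most $M^{-c_0}$ times the main term; this is the main obstacle, and it is where the restriction $\rho < 1/3$ enters decisively: it ensures $\eta \gg M^{-1/3} \asymp W^{-d/3}$, which is (roughly) the smallest scale on which the per-order gain furnished by the off-diagonal decay of $G$ -- together with the Ward-identity-type cancellations built into the renormalized expansion, rather than crude power counting -- still beats the combinatorial proliferation of terms.

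Finally, the sizes of the main and error terms come from the Fourier analysis of $S$. Since $S$ is convolution on $\bb T_L^d$ against the normalized indicator of the ball of radius $W$, it is diagonalized by the torus Fourier modes, with multiplier $\wh S(p) = 1 - c W^2 \abs{p/L}^2 + \cdots$ for small $p$; because $\abs{m(E + \ii 0)} = 1$ for $E \in (-1,1)$, the traces of powers of $\pb{1 - m(z_1)m(z_2)S}^{-1}$ that enter both $\Theta$ and the error reduce to Fourier sums comparable to $\int \dd^d q\,\pb{W^2 q^2 + \omega + \eta}^{-1}$: bounded for $d \geq 3$, of size $\abs{\log(\omega + \eta)}$ for $d = 2$, and of size $(\omega + \eta)^{-1/2}$ for $d = 1$ -- precisely the enhancement recorded by $R_2$ in \eqref{def_R2}. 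Tracking this factor through the expansion gives the error $O\pb{M^{-c_0} R_2(\omega + \eta)}$; the prefactor $(LW)^{-d} = (NM)^{-1}$ comes from the two factors $N^{-1}$ in $Y^\eta_\phi$ combined with the scaling of the leading Fourier sum, while the denominator $\avg{Y^\eta_{\phi_1}(E_1)}\avg{Y^\eta_{\phi_2}(E_2)}$ is of order one -- equal, up to negligible errors, to $\nu(E_1)\nu(E_2)\int\phi_1\int\phi_2$ -- again by the local law. For test functions analytic in a fixed strip the Helffer--Sj\"ostrand contour above can be kept at distance of order one from the real axis, up to an error exponentially small in $L/\ell_\eta$, so that the only place where a power of $L$ multiplies the error is already suppressed by $\ee^{-cL/\ell_\eta}$, which absorbs any $L^C$; this is what permits replacing $L \leq W^C$ by $L \leq \exp(W^c)$. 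The role of the present paper, besides the reduction above, is the algebraic identification of $\Theta$ and the derivation of its asymptotics in Theorems \ref{thm: Theta 2} and \ref{thm: Theta 1}.
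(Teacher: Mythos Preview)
Your proposal describes a resolvent-based strategy (Helffer--Sj\"ostrand representation, Dyson-type iteration of the self-consistent equation for $G$, local law from \cite{EKYY3} as the main input). This is \emph{not} what the paper does, and it is not what \cite{EK4} does either. The paper's method is the Chebyshev/nonbacktracking expansion of Section~\ref{sec: main argument}: one writes $\phi^\eta(H/2-E)=\sum_n H^{(n)}\,2\re(\psi^\eta*\gamma_n)(E)$ via \eqref{Chebyshev expansion of propagator}--\eqref{claim about gamma n}, expands the covariance $\avg{\tr H^{(n_1)};\tr H^{(n_2)}}$ as a sum over pairings of edges of two chains, collapses ladder subdiagrams into skeleton graphs, and identifies the eight dumbbell skeletons $D_1,\dots,D_8$ as the leading term $\cal V_{\rm main}$, which \emph{defines} $\Theta$ through \eqref{def:theta} and \eqref{common expression for V(D)}. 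The propagator $(1-\alpha S)^{-1}$ with $\alpha=\ee^{\ii(A_1-A_2)}$ arises not from iterating a Dyson equation but from summing the geometric series in the ladder multiplicities $b_\sigma$; your $(1-m(z_1)m(z_2)S)^{-1}$ is morally the same object, but the route is different. The restriction $\rho<1/3$ is not a vague balance between ``off-diagonal decay of $G$'' and combinatorics: it is the sharp $2/3$-rule of Lemma~\ref{lem:2/3_rule}, which bounds the number of independent vertex labels in a skeleton by $\tfrac23\abs{\Sigma}+O(1)$ and therefore forces $(M\eta)^{-\abs\Sigma}M^{2\abs\Sigma/3}\ll 1$.

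You also have the division of labor between the two papers reversed. The present paper does the error estimates (Propositions~\ref{prop:large_Sigma} and~\ref{prop:small_Sigma}, under simplifications \textbf{(S1)}--\textbf{(S3)}), while the precise computation of the leading term $\cal V_{\rm main}$ and the removal of \textbf{(S1)}--\textbf{(S3)} are deferred to \cite{EK4}. The local law of \cite{EKYY3}/\cite{EKYY4} enters only peripherally, in Lemma~\ref{lem:EY}, to evaluate the denominator $\E Y^\eta_{\phi_i}(E_i)$. Finally, your explanation of the analytic improvement is off: there is no contour deformation at distance $O(1)$ from the real axis. The mechanism is Proposition~\ref{prop: expansion with trunction}: for analytic $\phi_i$ the truncation errors \eqref{gamma - g}--\eqref{F - wt F} improve from $C_qM^{-q}$ to $\exp(-M^c)$, and this is what absorbs the factor $N$ when $L\leq\exp(W^c)$ rather than $L\leq W^C$.
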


We shall prove that the error term in \eqref{EY_result} is smaller than the main term $\Theta$ for all $d \geq 1$. The main term $\Theta$ has a simple, and universal, explicit form only for $d \leq 4$. Why $d=4$ is the critical dimension for the universality of the  correlation decay is explained in Section \ref{sec: heuristic leading term} below. The two following theorems give the leading behaviour of the function $\Theta$ for $d \leq 4$ in the two regimes $\omega = 0$ and $\omega \gg \eta$. In fact, one may also compute the subleading corrections to $\Theta$. These corrections turn out to be universal for $d \leq 2$ but not for $d \geq 3$; see Theorem \ref{thm: Theta 1} and the remarks following it.


In order to describe the leading behaviour of the variance, i.e.\ the case $\omega = 0$, we introduce the Fourier transform
\begin{equation*}
\phi(E) \;=\; \int_{\R} \dd t \, \ee^{-\ii E t} \, \wh \phi(t)\,, \qquad \wh \phi(t) \;=\; \frac{1}{2 \pi} \int_{\R}
 \dd E \, \ee^{\ii E t} \, \phi(E) \,.
\end{equation*}
For $d \leq 4$ we define the quadratic form $V_d$ through
\begin{equation} \label{def_V_d}
V_d(\phi_1, \phi_2) \;\deq\; \int_\R \dd t \, \abs{t}^{1 - d/2} \, \ol {\wh \phi_1(t)} \, \wh \phi_2(t) \qquad (d \leq 3) \,, \qquad \qquad V_4(\phi_1,\phi_2) \;\deq\; 2 \ol {\wh \phi_1(0)} \, \wh \phi_2(0)\,.
\end{equation}
Note that $V_d(\phi_1, \phi_2)$ is real since both $\phi_1$ and $\phi_2$ are.  In the case \textbf{(C1)} we have the explicit values
\begin{equation*}
V_0(\phi_1, \phi_2) \;=\; \frac{1}{2}  \,, \quad \;
V_1(\phi_1, \phi_2) \;=\; \frac{\sqrt{\pi}}{2 \sqrt{2}}\,, \quad \;
V_2(\phi_1, \phi_2) \;=\; 1\,, \quad \;
V_3(\phi_1, \phi_2) \;=\; \sqrt{2 \pi}\,, \quad \;
V_4(\phi_1, \phi_2) \;=\; 2\,.
\end{equation*}
For the following statements of results, we recall the density $\nu(E)$ of the semicircle law from \eqref{def_sc_density}, and remind the reader of the index $\beta = 1,2$ describing the symmetry class of $H$.

\begin{theorem}[The leading term $\Theta$ for $\omega = 0$] \label{thm: Theta 2}
Suppose that the assumptions in the first paragraph of Theorem \ref{thm: main result} hold, and let $\Theta_{\phi_1,\phi_2}^\eta(E_1,E_2)$ be the function from Theorem \ref{thm: main result}. Suppose in addition that $\omega = 0$. Then there exists a constant $c_1 > 0$ such that the following holds for $E = E_1 = E_2$  satisfying \eqref{D leq kappa}.
\begin{enumerate}
\item
For $d =1,2,3$ we have
\begin{equation} \label{Theta3D0}
\Theta_{\phi_1,\phi_2}^\eta(E,E) \;=\; \frac{(d + 2)^{d/2} }{2 \beta \pi^{2 + d} \nu(E)^4} \pbb{\frac{\eta}{\nu(E)}}^{d/2 - 2} \pb{V_d(\phi_1, \phi_2) + O(M^{-c_1})}\,.
\end{equation}
\item
For $d = 4$ we have
\begin{equation} \label{Theta4D0}
\Theta_{\phi_1,\phi_2}^\eta(E,E) \;=\; \frac{36 }{\beta \pi^6 \nu(E)^4}
 \pb{   V_4(\phi_1, \phi_2) \abs{\log\eta} +
O(1)}\,.
\end{equation}
\end{enumerate}
\end{theorem}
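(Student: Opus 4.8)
The proof is an asymptotic analysis of the explicit formula for $\Theta^\eta_{\phi_1,\phi_2}$ provided by \eqref{def:theta} and \eqref{common expression for V(D)}. Written out, that formula is a sum over the dual lattice $\bb T^* \deq \frac{2\pi}{L}\bb T$ together with an integral over a frequency variable dual to the energy, weighted by $\ol{\wh\phi_1}$ and $\wh\phi_2$; its building blocks are the Stieltjes transform $m$ of the semicircle law \eqref{def_sc_density} evaluated at spectral parameters with imaginary part of order $\eta$, and the Fourier transform $\wh S(p) = \sum_x S_{0x}\ee^{\ii p\cdot x}$ of the band profile \eqref{step S}. The plan is: (i) specialise to $\omega = 0$, where all spectral parameters collapse onto the conjugate pair $E \pm \ii\eta$; (ii) extract the leading part of the summand, which is the square of a continuum diffusion propagator, the remaining parts being smaller by a factor $\eta$ and hence absorbed into the error; (iii) localise the momentum sum and frequency integral, replace the sum by an integral, and evaluate the resulting continuum integrals; (iv) collect the constants.

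Steps (ii)--(iii) rest on two elementary expansions valid near the relevant scales. First, under our normalisation $\abs m$ is constant on the spectrum, so $1 - \abs{m(E+\ii\eta)}^2 = c(E)\,\eta + O(\eta^2)$ with $c(E) \asymp \nu(E)^{-1}$; combined with $\wh S(p) = 1 - \tfrac{W^2}{2(d+2)}\abs p^2 + O(W^4\abs p^4)$ -- the coefficient $\tfrac1{2(d+2)}$ being a direct computation for the step profile \eqref{step S} and the source of the factor $(d+2)^{d/2}$ in \eqref{Theta3D0} -- the relevant denominator acquires the diffusive form $c(E)\eta + \tfrac{W^2}{2(d+2)}\abs p^2 + (\text{higher order})$, with a mass of order $\eta/\nu(E)$ and a diffusion constant of order $W^2$. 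It follows that the momentum sum localises to $\abs p = O(\sqrt\eta / W)$ and the frequency integral to frequencies of order $\eta^{-1}$; on the complements the relevant propagators are bounded, the numerator carries powers of $\wh S(p)$ which decay for $\abs p \gg 1/W$, and the resulting tails contribute only a relative error that is a negative power of $M$. On the surviving region I would replace the sum over $\bb T^*$ by the corresponding integral over $\R^d$; this Riemann-sum approximation is legitimate because that region contains $\asymp (L\sqrt\eta/W)^d$ lattice points, a quantity that tends to infinity precisely under the standing hypotheses $L \geq W^{1+d/6}$ of \eqref{LW_assump} and $\eta = M^{-\rho}$ with $\rho < 1/3$ -- this is the only place those two assumptions enter -- and the $L^d$ produced by the lattice density of states cancels the factor $N^{-2} = L^{-2d}$ carried by $\avg{Y^\eta_{\phi_1};Y^\eta_{\phi_2}}$. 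The dichotomy \textbf{(C1)} versus \textbf{(C2)} plays no role: both conditions guarantee enough decay of $\wh\phi_j$ for the frequency integrals to converge and for the Taylor expansions in the frequency variable to be valid.

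For $d \leq 3$ the continuum expression factorises. The momentum integral is the convergent Gaussian-type integral $\int_{\R^d}(1+\abs q^2)^{-2}\,\dd q = \pi^{d/2}\Gamma(2-d/2)$, which, together with the rescaling $p \mapsto (\sqrt{c(E)\eta}/W)\,q$, produces the exponent $(\eta/\nu(E))^{d/2-2}$; the remaining frequency integration collapses the dependence on the test functions to $\int_\R \abs t^{1-d/2}\,\ol{\wh\phi_1(t)}\,\wh\phi_2(t)\,\dd t = V_d(\phi_1,\phi_2)$ of \eqref{def_V_d} -- the weight $\abs t^{1-d/2}$ being the diffusive return exponent $\abs t^{-d/2}$ times the length $\abs t$ of the diagonal slice of the two frequency variables. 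Assembling these with the overall $\beta^{-1}$ (a single diffusion term for $\beta = 2$, together with its cooperon partner for $\beta = 1$) and the remaining numerical and $\nu(E)$-dependent constants coming from the external vertices gives \eqref{Theta3D0}. For $d = 4$ the momentum integral is only logarithmically divergent and must be cut off at the microscopic scale $\abs p \asymp 1/W$, where the expansion of $\wh S$ breaks down; the ratio of this ultraviolet scale to the infrared scale $\sqrt\eta/W$ produces $\abs{\log\eta}$ with coefficient $(d+2)^{d/2}\big|_{d=4} = 36$. Equivalently, in the frequency picture the weight is $\abs t^{-1}$, whose integral against $\ol{\wh\phi_1}\,\wh\phi_2$ diverges at $t = 0$; cutting off at the microscopic scale and replacing the test functions by their values at the origin yields $V_4(\phi_1,\phi_2)\abs{\log\eta} = 2\,\ol{\wh\phi_1(0)}\,\wh\phi_2(0)\,\abs{\log\eta}$ and hence \eqref{Theta4D0}. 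In this borderline dimension the momentum and frequency integrations cannot be disentangled as cleanly -- naively each would contribute its own logarithm -- so a short joint analysis of the marginal region is needed to see that only a single $\abs{\log\eta}$ survives.

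The main obstacle I expect is the \emph{quantitative} sum-to-integral passage: one must show that the total error -- the lattice sum outside the diffusive window, the Riemann-sum discretisation on a region that shrinks with $\eta$, and the truncation of the Taylor expansions of $\wh S$, $m$, and $\wh\phi_j$ -- is bounded by a fixed negative power $M^{-c_1}$, uniformly for $E \in [-1+\kappa, 1-\kappa]$, and that the competition between these errors closes under the single constraint $\rho < 1/3$. The second, softer, difficulty is the borderline dimension $d = 4$, where the factorisation fails and the two incipient logarithms must be separated by hand. Everything else -- the Gaussian integral, the identification of the $\phi$-dependent factor with $V_d$, and the bookkeeping of the constants $(d+2)^{d/2}$, $\pi^{-(2+d)}$, $\nu(E)^{-4}$, $\beta^{-1}$ -- is routine if somewhat lengthy.
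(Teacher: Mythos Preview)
Your approach is essentially that of the paper: compare the heuristic in Section~\ref{sec: heuristic leading term}, with the rigorous computation deferred to the companion paper \cite{EK4}. One internal inconsistency is worth flagging. You claim both that the momentum integral is $\int_{\R^d}(1+\abs{q}^2)^{-2}\,\dd q$ (a pure number, after rescaling by a fixed real mass $c(E)\eta$) and that the subsequent frequency integration carries the weight $\abs{t}^{1-d/2}$; these cannot both be true, since a $t$-independent mass would leave no $t$-dependence for the $q$-integral to produce. The resolution (and the paper's actual route) is to first pass to the Fourier dual $t$ of the energy-convolution variable $v$ via $(\ii v+a)^{-2}=\int_0^\infty t\,\ee^{-at-\ii vt}\,\dd t$, after which the $q$-integral is genuinely Gaussian, $\int_{\R^d}\ee^{-t\,q\cdot Dq}\,\dd q\propto t^{-d/2}$, and the remaining $t$-integral is exactly $V_d$. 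Relatedly, the ``collapse onto $E\pm\ii\eta$'' you invoke is specific to \textbf{(C1)} through \eqref{Cauchy case identity}; for \textbf{(C2)} the convolution with $\psi_i^\eta$ is genuine and the mass in the propagator is $\ii v$ (complex, $v$-dependent), not a fixed real $c(E)\eta$. This also dissolves your worry about two logarithms in $d=4$: in the $t$-picture there is a single divergent integral $\int t^{-1}\,\ol{\wh e(\eta t)}\,\dd t$, cut off from below by the breakdown of the diffusion approximation, and it contributes one $\abs{\log\eta}$.
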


In order to describe the behaviour of $\Theta$ in the regime $\omega \gg \eta$, for $d = 1,2,3$ we introduce the constants 
\begin{equation}\label{def_K13}
K_d \;\deq\;  2 \re  \int_{\R^d} \frac{\dd x}{(\ii+\abs{x}^2)^2}\,;
\end{equation}
explicitly, 
\begin{equation*} 
K_1 = - \frac{\pi}{\sqrt{2}}\,, \qquad  K_2 = 0 \,,\qquad
K_3 = \sqrt{2} \pi^2\,.
\end{equation*}

\begin{theorem}[The leading term $\Theta$ in the regime $\omega \gg \eta$] \label{thm: Theta 1}
Suppose that the assumptions in the first paragraph of Theorem \ref{thm: main result} hold, and let $\Theta_{\phi_1,\phi_2}^\eta(E_1,E_2)$ be the function from Theorem \ref{thm: main result}.  Suppose in addition that
\begin{equation} \label{eta_Delta_2}
\eta \;\leq\; M^{-\tau} \omega
\end{equation}
for some arbitrary but fixed $\tau > 0$.  Then there exists a constant $c_1 > 0$ such that the following holds for $E_1,E_2$ satisfying \eqref{D leq kappa} for small enough $c_* > 0$. 

\begin{enumerate}
\item
For $d = 1, 2, 3$ we have
\begin{equation} \label{Theta_23}
\Theta_{\phi_1,\phi_2}^\eta(E_1,E_2) \;=\; \frac{(d + 2)^{d/2}}{2 \beta \pi^{2 + 3 d / 2}\nu(E)^4} \pbb{\frac{\omega}{\nu(E)}}^{d/2 - 2} \pB{K_d + O \pb{\sqrt{\omega} + M^{-c_1}}}\,.
\end{equation}
\item
For $d = 2$  \eqref{Theta_23} does not identify 
the leading term since  $K_2=0$. The leading nonzero correction to the
vanishing leading term is 
\begin{equation}\label{C1d2}
\Theta_{\phi_1,\phi_2}^\eta(E_1,E_2) \;=\; \frac{8}{\beta \pi^5 \nu(E)^4}
\pbb{\pi \nu(E) \, \frac{\eta } {\omega^2 + 4 \eta^2}  - \frac{\abs{\log \omega}}{3} + O(1)}
\end{equation} 
 in the  case \textbf{(C1)}
and 
\begin{equation}\label{C2d2}
\Theta_{\phi_1,\phi_2}^\eta(E_1,E_2) \;=\; \frac{8}{\beta \pi^5 \nu(E)^4}
\pbb{  - \frac{\abs{\log \omega}}{3} + O(1)}\, 
\end{equation}
in the case \textbf{(C2)}.
\item
For $d = 4$ we have
\begin{equation} \label{Theta_4}
\Theta_{\phi_1,\phi_2}^\eta(E_1, E_2) \;=\; \frac{36}{\beta \pi^6 \nu(E)^4} \pb{ \abs{\log \omega} + O (1)
}\,.
\end{equation}
\end{enumerate}
\end{theorem}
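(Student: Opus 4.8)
The proof is an asymptotic analysis, in the regime $\omega \gg \eta$, of the explicit formula for $\Theta^\eta_{\phi_1,\phi_2}(E_1,E_2)$ recorded in \eqref{def:theta} and \eqref{common expression for V(D)}. That formula has the structure of a sum over the momenta $p$ dual to $\bb T$ of a diffusion-propagator factor $\pb{c_\eta + \ii c_\omega + \lambda(p)}^{-2}$ -- together with its complex conjugate, i.e.\ the diffuson and cooperon contributions, the latter present only for $\beta = 1$ -- weighted by factors built from $\wh\phi_1$ and $\wh\phi_2$; here $\lambda(p) \deq 1 - \wh S(p)/\wh S(0)$ is the dispersion of the step matrix $S$, the shift $c_\eta$ is proportional to $\eta$, and $c_\omega$ is proportional to $\omega$. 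The plan is first to localise the singular $\omega$-dependence. Since $c_\eta \asymp \eta \leq M^{-\tau}\omega$, the propagator has size $\asymp \omega^{-2}$ precisely on the small-momentum set $\abs p \lesssim W^{-1}\sqrt\omega$, and it is this region that produces the leading power of $\omega$; on it the test-function weights may, for generic $d$, be frozen at their value at $p = 0$, with corrections of relative size $O(\omega)$, which is why the leading term in parts (i) and (iii) carries no test-function dependence. Throughout one uses the bulk hypothesis \eqref{D leq kappa} and the standard expansion of the Stieltjes transform $m(E + \ii\eta)$ near the real axis to produce the explicit prefactors involving $\nu(E)$ and $\beta$.

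Next I would replace the momentum sum by an integral over $\R^d$. This is legitimate in the diffusive regime $L \gg W/\sqrt\omega$ -- guaranteed by $L \geq W^{1+d/6}$ together with $\eta \leq M^{-\tau}\omega$ -- because the relevant momenta $\abs p \asymp W^{-1}\sqrt\omega$ are large compared with the lattice spacing $2\pi/L$ but small compared with $1$: Poisson summation then yields an error that is super-polynomially small in $L$ for analytic test functions and polynomially small, with a gain $M^{-c_1}$, otherwise, while on the relevant scale $\lambda(p) = b_S\abs p^2 + O(\abs p^4)$ with $b_S \asymp W^2$. Rescaling $p = (c_\omega/b_S)^{1/2} q$ extracts the prefactor $\omega^{d/2-2}$ and identifies the remaining dimensionless integral, after collecting the constants from \eqref{def:theta}, with the constant $K_d$ of \eqref{def_K13}. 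For $d = 1,3$ this integral converges absolutely, and the errors from the $\eta$-part of the propagator, from the $O(\abs p^4)$ terms of $\lambda$, from the large-momentum region, and from the freezing of the test-function weights are all of relative size $O(\sqrt\omega + M^{-c_1})$; this yields the $d = 1,3$ case of part (i), the genuine size $O(\sqrt\omega)$ of the leading correction being that of the quartic term of $\lambda$.

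For $d = 4$ the same computation applies, except that the rescaled integral $\int_{\R^4}(\ii + \abs q^2)^{-2}\,\dd q$ diverges logarithmically at large momentum; cutting it off at the natural band scale $\abs p \asymp 1$ rather than at infinity turns the would-be constant into $\abs{\log\omega} + O(1)$, which is part (iii). (This is also why $V_4$ in Theorem \ref{thm: Theta 2} is the zero-mode value $2\,\ol{\wh\phi_1(0)}\,\wh\phi_2(0)$.) The case $d = 2$, part (ii), is the delicate one. Here the leading continuum integral is purely imaginary, $\re\int_{\R^2}(\ii + \abs q^2)^{-2}\,\dd q = \re(\pi/\ii) = 0$, which is the source of the algebraic cancellation $K_2 = 0$: the leading term vanishes identically, so the answer is governed by the first non-vanishing correction, which receives contributions from several a priori comparable sources, each of which must be computed exactly. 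These are: (a) the subleading term of the dispersion, $\lambda(p) = b_S\abs p^2 + b_S'\abs p^4 + \cdots$, which upon substitution produces a logarithmically divergent two-dimensional integral whose ultraviolet cutoff at the band scale and infrared cutoff at $\abs p \asymp W^{-1}\sqrt\omega$ yield, after collecting constants, the universal term $-\tfrac13\abs{\log\omega}$; (b) the test-function-dependent part of \eqref{common expression for V(D)}, which in the case \textbf{(C1)} contributes the additional term $\pi\nu(E)\,\eta/(\omega^2 + 4\eta^2)$ -- the heavy Cauchy tails make $\phi_1^\eta$ and $\phi_2^\eta$ overlap even at separation $\omega \gg \eta$, and this term is morally $\pi\nu(E)$, the local density of states, times $\phi_1^\eta \star \phi_2^\eta(\omega)$ -- whereas for \textbf{(C2)} the rapid decay of $\wh\phi_i$ makes it of size $O(1)$ or smaller; and (c) the finite-size and boundary corrections to the sum-to-integral replacement, which must be shown to be $O(1)$.

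The principal difficulty is thus the two-dimensional bookkeeping. Because the generic leading term vanishes identically one must track cancellations one order further, and the stated answer emerges only after showing that among all the subleading effects exactly two survive -- the universal $-\tfrac13\abs{\log\omega}$ from the lattice dispersion, and, in the case \textbf{(C1)} only, the $\eta/(\omega^2 + 4\eta^2)$ tail term -- and that the former is genuinely independent of the admissible choice of \textbf{(C2)} test functions, even though the test-function weights re-enter the computation at this order. By contrast, once the continuum approximation and the rescaling $p = (c_\omega/b_S)^{1/2}q$ are in place, the cases $d = 1,3$ of part (i) and part (iii) reduce to identifying the explicit constants and bounding the error terms, and are comparatively routine.
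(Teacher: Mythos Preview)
Your proposal is correct and follows essentially the same route as the paper. In this paper the proof of Theorem \ref{thm: Theta 1} is very short (Section \ref{sec:conclusion}): one defines $\Theta$ by \eqref{def:theta}, invokes Proposition \ref{prop: main} (i)--(ii) for the asymptotics of $\cal V_{\rm main}$, divides by the denominator via Lemma \ref{lem:EY}, and plugs in the explicit constants \eqref{Dconst} and \eqref{Qconst}; the actual asymptotic analysis of $\cal V_{\rm main}$ (Proposition \ref{prop: leading term}) is deferred to the companion paper. Your sketch is precisely that deferred computation, and it matches both the heuristic outline in Section \ref{sec: heuristic leading term} and the structure of the companion-paper argument: Fourier-diagonalise $S$, identify the near-singular resolvent $(1 - \alpha S)^{-2}$ with $\alpha \approx 1 - \ii \omega / \nu$, pass from the lattice sum to a continuum integral, rescale to extract $\omega^{d/2-2}$ and the constant $K_d$, and for $d=2,4$ track the logarithm from the marginal integrals.

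One small refinement for the $d=2$ bookkeeping: the coefficient in front of $\abs{\log\omega}$ is $(Q-1)$ in the general-profile version (Proposition \ref{prop: main} (ii)), where $Q$ is the fourth-moment quantity \eqref{def_Q}; only the $Q$ part comes from the quartic term of the dispersion as in your source (a), while the $-1$ arises from a separate subleading expansion (of the phase $\ee^{\ii(A_1-A_2)}$ and of the prefactors). For the step profile $Q_0 = 2/3$, whence the $-1/3$. Your identification of the extra \textbf{(C1)} term as the Cauchy-tail overlap is exactly the mechanism the paper isolates.
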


Note that the leading non-zero terms in the expressions \eqref{Theta3D0}, \eqref{Theta4D0}, \eqref{Theta_23}--\eqref{Theta_4} 
are much larger than the additive error term in \eqref{EY_result}. Hence,
Theorems \ref{thm: main result} and \ref{thm: Theta 2} give a proof of the first Altshuler-Shklovskii formula, \eqref{ASvar2}.
Similarly, Theorems \ref{thm: main result} and \ref{thm: Theta 1} give a proof of the second Altshuler-Shklovskii formula, \eqref{AScorr2}.

The additional term in \eqref{C1d2} as compared to \eqref{C2d2} originates from the heavy Cauchy tail in the test functions $\phi_1, \phi_2$ at large distances. In Theorem \ref{thm: Theta 1} (ii), we give the leading correction, of order $\abs{\log \omega}$, to the vanishing main term for $d = 2$. Similarly, for $d = 1$ one can also derive the leading correction to the nonzero main term (which is of order $\omega^{-3/2}$). This correction turns out to be of order $\omega^{-1/2}$; we omit the details.

\begin{remark}
The leading term  in \eqref{EY_result}  originates from the  so-called one-loop diagrams in the terminology of physics.
The next-order term after the vanishing leading term for $d=2$ (recall that to $K_2=0$) 
 was first computed by Kravtsov and Lerner \cite[Equation (13)]{KrLe}.
They found that  for $\beta = 1$ 
 it is of order $ (LW)^{-2} W^{-2} \omega^{-1}$
and for $\beta = 2$  even smaller, of order $ (LW)^{-2} W^{-4} \omega^{-1}$.
Part (ii) of Theorem \ref{thm: Theta 1} shows that, at least in the regime $\omega\gg \eta \gg M^{-1/3} = W^{-2/3}$,
the  true  behaviour is much larger. The origin of this term is a more precise
computation of the one-loop diagrams, in contrast to \cite{KrLe} where the authors
attribute the next-order term to the two-loop diagrams. (See \cite[Section 3]{EK4} for more details.) 
\end{remark}

\begin{remark}
If the distribution of the eigenvalues $\lambda_i$ of $H/2$ were governed by Poisson statistics, the behaviour of the covariance \eqref{EY_result} would be very different. Indeed, suppose that $\{\lambda_i\}$ is a stationary Poisson point process with intensity $N$. Then, setting $Y^\eta_\phi(E) \deq \frac{1}{N} \sum_{\alpha} \phi^\eta(\lambda_i - E)$  and supposing that $\int \phi_1 = \int \phi_2 = 1$,  we find
\begin{equation*}
\frac{\avg{Y^\eta_{\phi_1}(E_1) \, ; Y^\eta_{\phi_2}(E_2)}}{\avg{Y^\eta_{\phi_1}(E_1)} \avg{Y^\eta_{\phi_2}(E_2)}} \;=\; \frac{1}{N\eta} \big( \phi_1\ast \wt \phi_2\big) \pbb{\frac{E_2-E_1}{\eta}}
  \;=\; \frac{1}{N} \big( \phi_1^\eta\ast \wt \phi_2^\eta)(\omega) \,,
\end{equation*}
where $\wt\phi_2 (x) \deq \phi_2(-x)$.
This is in stark contrast to \eqref{Theta3D0}, \eqref{Theta4D0}, \eqref{Theta_23}, and \eqref{Theta_4}. In particular, in the case $\omega \gg \eta$ the behaviour of the covariance on $\omega$ depends on the tails of $\phi_1$ and $\phi_2$, unlike in \eqref{Theta_23} and \eqref{Theta_4}.
Hence, if $\phi_1$ and $\phi_2$ are compactly supported then the covariance for the Poisson process is zero, while for the eigenvalue process of a band matrix
it has a power law decay in $\omega$.
\end{remark}

\begin{remark}\label{rem1}
We emphasize that Theorem \ref{thm: main result} is true under the sole restrictions \eqref{D leq kappa} on $\omega$ and $E_1,E_2$. 
However, the leading term $\Theta$ only has a simple and universal form in the two (physically relevant) regimes $\omega = 0$ and
 $\eta \ll \omega \ll 1$ of Theorems \ref{thm: Theta 2} and \ref{thm: Theta 1}. If neither of these conditions holds, 
the expression for $\Theta$ is still explicit but much more cumbersome and opaque. It is given by the sum of the values of eight (sixteen for $\beta = 1$)  skeleton graphs, after a ladder resummation; these skeleton graphs are referred to as the ``dumbbell skeletons'' $D_1, \dots, D_8$ in Section \ref{sec: main argument} below, and are depicted in Figure \ref{fig: dumbbell} below. (They are the analogues of the \emph{diffusion} and \emph{cooperon} Feynman diagrams in the physics literature.) 
\end{remark}

\begin{remark} \label{rem: diffusion}
The upper bound in the 
assumption \eqref{LW_assump} is technical and can be relaxed.
The lower bound in \eqref{LW_assump}, however, is a natural restriction, and is related to the \emph{quantum diffusion} generated by the band matrix $H$. In \cite{EK1}, it was proved that the propagator $\abs{(\ee^{-\ii t H/2})_{x0}}$ behaves diffusively for $1 \ll t \ll M^{1/3} \asymp W^{d/3}$, whereby the spatial extent of the diffusion is $x \asymp \sqrt{t} W \ll W^{1+d/6}$. Similarly, in \cite{EKYY3}, it was proved that the resolvent $\absb{(H/2 - E - \ii \eta)^{-1}_{x0}}^2$ has a nontrivial profile on the scale $x\asymp \eta^{-1/2} W$.
(Note that $\eta$ is the conjugate variable to $t$, i.e.\ the time evolution up to time $t$
describes the same regime as the resolvent with a spectral parameter $z$ whose
imaginary part is $\eta \asymp 1/t$.)
Since in Theorem \ref{thm: main result} we assume that $\eta \gg M^{-1/3} \gg W^{-d/3}$, the condition \eqref{LW_assump} simply states that the diffusion profile associated with the spectral resolution $\eta$ does not reach the edge of the torus $\bb T$. Thus, the lower bound in \eqref{LW_assump} imposes a 
 regime in which boundary effects are irrelevant. Hence  we are in the diffusive regime -- a basic
assumption  of  the Altshuler-Shklovskii formulas \eqref{ASvar2} and 
\eqref{AScorr2}. 
\end{remark}

\subsection{A remark on  Wigner matrices} \label{sec:Wigner}

Our method can easily be applied to the case where the lower bound 
in \eqref{LW_assump} is not satisfied. In this case, however, the leading behaviour $\Theta^\eta(E_1,E_2)$ is modified
by boundary effects. To illustrate this phenomenon, we state the analogue of Theorems \ref{thm: main result}--\ref{thm: Theta 1} 
for the case of  $W = L$. In this case,  the physical dimension $d$ in Section \ref{sec:setup1} is irrelevant.
 The off-diagonal entries of $H$ are all identically distributed, i.e.\ $H$ is a standard Wigner matrix (neglecting the irrelevant diagonal entries), and we have $M = N - 1$, and $S = N (N - 2)^{-1} (\f e \f e^* - N^{-1})$ where $\f e \deq N^{-1/2} (1,1, \dots, 1)^*$. In particular, $H$ is a mean-field model in which the geometry of $\bb T$ plays no role; the effective dimension is $d=0$. In this case \eqref{EY_result} remains valid, and we get the following result.

\begin{theorem}[Theorems \ref{thm: main result}--\ref{thm: Theta 1} for Wigner matrices] \label{thm: Wigner}
Suppose that $W = L = N$. Fix $\rho \in (0, 1/3)$ and set $\eta \deq N^{-\rho}$. Suppose that the test functions $\phi_1$ and $\phi_2$ satisfy either both \textbf{(C1)} or both \textbf{(C2)}. Then there exists a constant $c_0 > 0$ and a function $\wt \Theta_{\phi_1,\phi_2}^\eta(E_1, E_2)$ such that for any $E_1,E_2$ satisfying \eqref{D leq kappa} for small enough $c_* > 0$  the following holds.

\begin{enumerate}
\item
The local density-density correlations satisfy
\begin{equation}
\frac{\avg{Y^\eta_{\phi_1}(E_1) \, ; Y^\eta_{\phi_2}(E_2)}}{\avg{Y^\eta_{\phi_1}(E_1)} \avg{Y^\eta_{\phi_2}(E_2)}} \;=\; \frac{1}{N^2} \pB{\wt \Theta_{\phi_1,\phi_2}^\eta(E_1,E_2) + O \pb{N^{-c_0} (\omega + \eta)^{-1}}} \,.
\end{equation}
\item
If \eqref{eta_Delta_2} holds then
\begin{equation*}
\wt \Theta_{\phi_1, \phi_2}^\eta(E_1,E_2) \;=\; \frac{4}{\beta \pi^4  \nu(E)^4} \, \frac{1}{\omega^2} \pB{
-1 + O \pb{\sqrt{\omega} + N^{-\tau/2}}}\,.
\end{equation*}
\item
If $\omega = 0$ then
\begin{equation*}
\wt \Theta_{\phi_1, \phi_2}^\eta(E,E) \;=\; \frac{2}{\beta \pi^{4} \nu(E)^4} \, \frac{1}{\eta^2} \, \pb{V_0(\phi_1, \phi_2) + O(N^{-c_0})}\,,
\end{equation*}
where $V_0$ was defined in \eqref{def_V_d}. 
\end{enumerate}
\end{theorem}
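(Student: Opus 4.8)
The plan is to observe that Theorem~\ref{thm: Wigner} is nothing but the $W = L$ case --- equivalently, the effective dimension $d = 0$ --- of Theorems~\ref{thm: main result}, \ref{thm: Theta 1} and \ref{thm: Theta 2}, and to run the arguments of Section~\ref{sec: main argument} and of \cite{EK4} with the degenerate covariance matrix $S = N (N-2)^{-1} \pb{\f e \f e^* - N^{-1}}$, recording the simplifications. Since this $S$ has essentially a single nontrivial eigenvalue, the diffusion generated by $H$ has only the zero mode: there is no momentum integration, and the ladder resummation that in the band case produces the diffusion propagator $\propto (\abs{p}^2 + \eta)^{-1}$ and the dimension-dependent powers of $\eta$ and $\omega$ collapses to a scalar geometric series, summable in closed form. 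Evaluating this resummation on the dumbbell skeletons $D_1, \dots, D_8$ produces the function $\wt\Theta^\eta_{\phi_1,\phi_2}(E_1,E_2)$ of part~(i).

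First I would reduce to the covariance of resolvent traces. For test functions of type \textbf{(C1)} one has the exact identity $Y^\eta_\phi(E) = \frac{2}{N} \im \tr (H/2 - E - \ii\eta)^{-1}$, so that $\avgb{Y^\eta_{\phi_1}(E_1)\,;\, Y^\eta_{\phi_2}(E_2)}$ is a fixed linear combination of the four quantities $\avgb{\tr G(z_1)\,;\, \tr G(z_2)}$ with $z_a \in \{E_a + \ii\eta,\, E_a - \ii\eta\}$ and $G(z) \deq (H/2 - z)^{-1}$. For type \textbf{(C2)} I would instead use the Helffer--Sjöstrand formula to express $Y^\eta_\phi(E)$ as an integral of $\frac{1}{N}\tr G(z)$ against $\bar\partial \wt\phi^\eta_E(z)$, where $\wt\phi^\eta_E$ is a quasi-analytic extension of $\phi^\eta(\cdot - E)$ concentrated on $\abs{\im z} \lesssim \eta$ with rapidly decaying $\bar\partial$; the covariance of the $Y$'s then becomes a double contour integral of $\avgb{\tr G(z_1)\,;\,\tr G(z_2)}$.

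The second step is to compute $N^2 \avgb{\frac{1}{N}\tr G(z_1)\,;\, \frac{1}{N}\tr G(z_2)}$. Running the graphical expansion of Section~\ref{sec: main argument} with the above $S$ and performing the (scalar) ladder resummation yields, uniformly for $z_1, z_2$ in the relevant region, an explicit rational function of $m(z_1), m(z_2)$ and their derivatives (with $m = m_{sc}$ the Stieltjes transform of the semicircle law of $H/2$), with the additive error $O\pb{N^{-c_0}(\omega + \eta)^{-1}}$ of part~(i); this is the mesoscopic, quantitative form of the classical Wigner covariance formula $N^2 \avgb{\tr G(z_1)\,;\,\tr G(z_2)} \to \frac{2}{\beta}\, \partial_{z_1}\partial_{z_2} \log \frac{m(z_1) - m(z_2)}{z_1 - z_2}$, cf.\ \cite{Kho1, Kho2}. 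Substituting this into the integral over the test-function contour(s) and letting $\eta = N^{-\rho} \to 0$ gives parts~(ii) and~(iii). In the regime $\omega \gg \eta$ the enhancement comes entirely from the mixed pairs $\avgb{\tr G(z_1)\,;\,\tr G(\bar z_2)}$, whose kernel has a $(z_1 - \bar z_2)^{-2}$ singularity; since this kernel varies on the scale $\omega \gg \eta$, the test functions enter only through their masses $\int \phi_i$, which cancel against those in $\avg{Y_1}\avg{Y_2}$ --- this is why the answer in part~(ii) is $\phi$-independent and equals $\frac{4}{\beta \pi^4 \nu(E)^4}\, \omega^{-2}\pb{-1 + o(1)}$, the constant being produced by $\im m(E + \ii 0) = \pi\nu(E)$ and $\abs{m'(E+\ii 0)}$, and the factor $1/\beta$ by the additional cooperon contribution for $\beta = 1$. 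For $\omega = 0$ the same $(z_1 - \bar z_2)^{-2}$ kernel produces $\eta^{-2}$, but now the $\phi_i$-dependence survives: by Parseval it assembles into the homogeneous Sobolev pairing $V_0(\phi_1,\phi_2) = \int_\R \dd t\, \abs{t}\, \ol{\wh\phi_1(t)}\, \wh\phi_2(t)$ of \eqref{def_V_d}, which is part~(iii).

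The main obstacle depends on how self-contained one wishes to be. Leaning on the band-matrix proof, the work is bookkeeping: one must check that the error estimates of Section~\ref{sec: main argument} and \cite{EK4} survive the degenerate geometry --- precisely the regime that the lower bound in \eqref{LW_assump} was imposed to exclude --- so every step relying on boundedness of the diffusion propagator away from the zero mode, or on the scale separation $W \ll L$, must be revisited; in fact the degeneration is benign, since with a trivial configuration space there is no room for the diffusion profile to reach the boundary. For a self-contained proof, the real point is to establish the resolvent covariance formula with a quantitative $O(N^{-c_0})$ error \emph{uniformly down to the mesoscopic scale} $\eta = N^{-\rho}$, $\rho < 1/3$ --- a quantitative, mesoscopic strengthening of \cite{Kho1, Kho2}, in which the exponent $1/3$ and the error rate trace back to the same inputs as in the band case --- and, for type \textbf{(C2)} test functions, to control the Helffer--Sjöstrand reconstruction and to identify the limiting quadratic form as $V_0$; the type \textbf{(C1)} case, by contrast, reduces to a direct residue computation. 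I expect this last identification, the emergence of $V_0$ for general test functions, to require the most care in a self-contained treatment.
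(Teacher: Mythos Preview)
Your opening paragraph is exactly what the paper does: it states that the proof of Theorem~\ref{thm: Wigner} proceeds along the same lines as Theorems~\ref{thm: main result}--\ref{thm: Theta 1}, with the simple form of $S$ making everything easier, and omits the details. Your observation that the ladder resummation collapses to a scalar geometric series (since $S$ has essentially one nontrivial eigenvalue and $\tr S^b \approx 1$ for $b \geq 1$) is the right mechanism, and the dumbbell evaluation \eqref{common expression for V(D)} indeed reduces to closed-form sums.

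However, the detailed route you then sketch --- passing through resolvent trace covariances and, for \textbf{(C2)}, a Helffer--Sj\"ostrand representation --- is \emph{not} the paper's approach. The paper never represents $Y^\eta_\phi$ as a contour integral of $\tr G(z)$; it works directly with the Chebyshev/nonbacktracking expansion \eqref{calculation of phi eta} of $\phi^\eta(H/2-E)$, for both \textbf{(C1)} and \textbf{(C2)}, and the test function enters only through the coefficients $\wh\phi(\eta t)$ in $\wt\gamma_n$. In particular, there is no separate ``resolvent covariance formula'' step and no Helffer--Sj\"ostrand reconstruction; the graphical expansion of Section~\ref{sec: main argument} acts on $\avgb{\tr H^{(n_1)};\tr H^{(n_2)}}$, not on $\avgb{\tr G(z_1);\tr G(z_2)}$. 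Your approach is closer in spirit to \cite{Kho1, Kho2} and is perfectly legitimate --- for \textbf{(C1)} the two coincide via \eqref{Cauchy case identity}, and your identification of the $(z_1-\bar z_2)^{-2}$ kernel and of $V_0$ as the limiting bilinear form is correct --- but it buys you a cleaner connection to the classical Wigner covariance literature at the cost of an extra layer (the $\bar\partial$-integral) that the paper's uniform treatment of \textbf{(C1)} and \textbf{(C2)} avoids. Also note a small slip: the phrase ``running the graphical expansion of Section~\ref{sec: main argument}'' on the resolvent traces is not quite right as stated, since that expansion is tailored to the nonbacktracking powers $H^{(n)}$; you would either have to re-expand $G(z)$ in that basis (which is just case~\textbf{(C1)} again) or set up a separate moment expansion.
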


The proof of Theorem \ref{thm: Wigner} proceeds along the same lines as that of Theorems \ref{thm: main result}--\ref{thm: Theta 1}. In fact, the simple form of $S$ results in a much easier proof; we omit the details. We remark that a result analogous to parts (i) and (ii) of Theorem \ref{thm: Wigner}, in the case where $\phi_1 = \phi_2$ are given by \eqref{Cauchy}, was derived in \cite{Kho1, Kho2}. More precisely, in \cite{Kho1}, the authors assume that $H$ is a GOE matrix and derive (i) and (ii) of Theorem \ref{thm: Wigner} for any $0 < \rho < 1$; in \cite{Kho2}, they extend these results to arbitrary Wigner matrices under the additional constraint that $0 < \rho < 1/8$. Moreover, results analogous to (iii) for the Gaussian Circular Ensembles were proved in \cite{Sosh00}. More precisely, in \cite{Sosh00} it is proved that in Gaussian Circular Ensembles the appropriately scaled mesoscopic linear statistics $Y^\eta_\phi(E)$ with $1/N \ll \eta \ll 1$ are asymptotically Gaussian with variance proportional to $V_0(\phi,\phi)$. We remark that for random band matrices the mesoscopic linear statistics also satisfy a Central Limit Theorem; see \cite[Corollary 2.6]{EK4}.

\subsection{Structure of the proof}\label{sec:struc}

The starting point of the proof is to use the Fourier transform to
rewrite $\tr \phi^\eta(H/2-E)$, the spectral density on scale $\eta$, 
in terms of $\ee^{\ii tH}$ up to times $\abs{t}\lesssim \eta^{-1}$. The large-$t$ behaviour
of this unitary group has been extensively analysed in \cite{EK1, EK2}
by developing a graphical expansion method which we also use in this paper. 
 The main difficulty is to control highly oscillating sums. 
Without any resummation, the sum of the absolute values of
 the summands diverges exponentially in $L$, although their actual sum remains bounded. 
The leading divergence in this expansion is removed using a resummation that is implemented by 
 expanding $\ee^{\ii tH}$ in terms of Chebyshev polynomials of $H$ instead of  powers of $H$. 
 This step, motivated by \cite{FS}, is algebraic and requires the  deterministic condition 
 $\abs{H_{xy}}=1$.
(The removal of this condition is possible, but requires substantial
 technical efforts that mask the essence of
the argument; see Section \ref{sec:outlook}). In the jargon of diagrammatic 
perturbation theory, this resummation step corresponds to the self-energy renormalization.

The goal of  \cite{EK1, EK2} was to show that the unitary propagator  $\ee^{\ii tH}$ can be
described by a diffusive equation on large space and time scales. This analysis 
identified only the leading behaviour of $\ee^{\ii tH}$, which was sufficient
to prove quantum diffusion emerging from the unitary time evolution.
The quantity studied in the current paper -- the
local density-density correlation -- is considerably more difficult to analyse
because it arises from higher-order terms of $\ee^{\ii tH}$ than the quantum diffusion.
Hence, not only does the leading term have to be computed more precisely,
 but the error estimates also require a much more delicate analysis. 
 In fact,  we have to perform a second algebraic resummation procedure,
where oscillatory sums corresponding
to families of specific subgraphs, the so-called ladder subdiagrams, are
 bundled together and computed with high
precision.
Estimating individual ladder graphs in absolute value is not affordable: a term-by-term estimate is possible only 
after this second renormalization step.
Although the expansion in nonbacktracking powers of $H$ is the same as in \cite{EK1, EK2}, our proof in fact has little in common with that of \cite{EK1, EK2}; the only similarity is the basic graphical language. In contrast to \cite{EK1, EK2}, almost all of the work in this paper involves controlling 
oscillatory sums, both in the error estimates and in the computation of the main term.

The complete proof is given in the current paper and its companion \cite{EK4}.
In order to highlight the key ideas, the current paper contains the proof
assuming three important simplifications,  given precisely in \textbf{(S1)}--\textbf{(S3)} in Section \ref{sec_41} below.
They concern certain specific terms in the multiple summations arising from our diagrammatic  expansion. 
 Roughly, these simplifications amount
 to only dealing with typical summation label configurations (hence ignoring exceptional label coincidences) and restricting the summation over all partitions to a summation over pairings. As explained in \cite{EK4}, dealing with exceptional label configurations and non-pairings requires significant additional efforts, which are however largely unrelated to the essence of the argument presented in the current paper. How to remove these simplifications, and hence complete the proofs, is explained in \cite{EK4}. 
In addition,  the precise calculation of the leading term is also given in \cite{EK4}; 
in the current paper we give a sketch of the calculation 
(see Section \ref{sec: heuristic leading term} below).

We close this subsection by noting that the restriction $\rho< 1/3$ for the exponent of $\eta = M^{-\rho}$  is technical 
 and stems from a fundamental combinatorial fact  that underlies our proof---the so-called \emph{2/3-rule}.
The 2/3-rule was introduced in \cite{EK1, EK2} and is stated in the current context in Lemma \ref{lem:2/3_rule} below. In \cite[Section 11]{EK1}, it was shown that the 2/3-rule is sharp, and is in fact saturated for a large family of graphs. For more details on the 2/3-rule and how it leads the the restriction on $\rho$, we refer to the end Section \ref{sec:large_Sigma} below.

\subsection{Outlook and generalizations} \label{sec:outlook}

We conclude this section by summarizing some extensions of our results from the companion paper \cite{EK4}. First, our results easily extend from the two-point correlation functions of  \eqref{EY_result} to arbitrary $k$-point correlation functions of the form
\begin{equation*}
\E \prod_{i = 1}^k \pBB{\frac{Y^\eta_{\phi_i}(E_i) - \E Y^\eta_{\phi_i}(E_i)}{\E Y^\eta_{\phi_i}(E_i)}}\,.
\end{equation*}
In \cite[Theorem 2.5]{EK4}, we prove that the joint law of the smoothed densities $Y^\eta_{\phi_i}(E_i)$
 is asymptotically Gaussian with covariance matrix $(\Theta^\eta_{\phi_i, \phi_j}(E_i, E_j))_{i,j}$, given by the Altshuler-Shklovskii formulas. This result may be regarded as a Wick theorem for the mesoscopic densities, i.e.\ a central limit theorem for the mesoscopic linear statistics of
eigenvalues. In particular, if $E_1 = \cdots = E_k$, the finite-dimensional marginals of the process $(Y^\eta_\phi(E))_\phi$ converge (after an appropriate affine transformation) to those of a Gaussian process with covariance $V_d(\cdot\,, \cdot)$.

Second, in \cite[Section 2.4]{EK4} we introduce a general family of band matrices, where we allow the second moments $S_{xy} = \E \abs{H_{xy}}^2$ and $T_{xy} = \E H_{xy}^2$ to be arbitrary translation-invariant matrices living on the scale $W$.
In particular, we generalize  the sharp step profile from \eqref{step S}  and relax  the deterministic condition $\abs{A_{xy}} = 1$.  Note that we allow $T_{xy}$ to be arbitrary up to the trivial constraint $\abs{T_{xy}} \leq S_{xy}$, thus embedding the real symmetric matrices and the complex Hermitian matrices into a single  large family of band matrices. In particular, this generalization allows us
to probe the transition from $\beta = 1$ to $\beta = 2$  by rotating the entries of $H$ or by scaling $T_{xy}$.
Note that $S=T$ corresponds to the real symmetric case, while $T=0$ corresponds to
a complex Hermitian case where the real and imaginary parts of the matrix elements
are uncorrelated and have the same variance. We can combine this rotation and scaling
into a two-parameter family of models; roughly, we 
consider $T_{xy} \approx (1 - \varphi) \ee^{\ii \lambda} S_{xy}$ where $\varphi,\lambda \in [0,1]$ are real parameters.  We 
 show that the mesoscopic statistics described by Theorems \ref{thm: Theta 2} and \ref{thm: Theta 1} take on a more complicated form in the case of the general band matrix model; they depend on the additional parameter $\sigma = \lambda^2 + \varphi$, which also characterizes the transition from $\beta = 1$ (small $\sigma$) to $\beta = 2$ (large $\sigma$). 
We refer to \cite[Section 2.4]{EK4} for the details.

\section{The renormalized path expansion}

Since the left-hand side of \eqref{EY_result} is invariant under the scaling $\phi \mapsto \lambda \phi$ for $\lambda \neq 0$, we assume without loss of generality that $\int \dd E \, \phi_i(E) = 2 \pi$ for $i = 1,2$. We shall make this assumption throughout the proof without further mention.

\subsection{Expansion in nonbacktracking powers}

We expand $\phi^\eta(H / 2 - E)$ in nonbacktracking powers $H^{(n)}$ of $H$, defined through
\begin{equation} \label{def: nb}
H^{(n)}_{x_0 x_n} \;\deq\; \sum_{x_1, \dots, x_{n - 1}} H_{x_0 x_1} \cdots H_{x_{n - 1} x_n} \prod_{i = 0}^{n - 2} 
\ind{x_i \neq x_{i + 2}}\,.
\end{equation}
From \cite{EK1}, Section 5, we find that
\begin{equation} \label{H^n and U_n}
H^{(n)} \;=\; U_n(H/2) - \frac{1}{M - 1} U_{n - 2}(H / 2)\,,
\end{equation}
where $U_n$ is the $n$-th Chebyshev polynomial of the second kind, defined through
\begin{equation} \label{definition of Un}
U_n(\cos \theta) \;=\; \frac{\sin (n + 1) \theta}{\sin \theta}\,.
\end{equation}
The identity \eqref{H^n and U_n} first appeared in \cite{FS}.
Note that it requires the deterministic condition $\abs{A_{xy}} = 1$ on the entries of $H$.  However, our basic approach still works even if this condition is not satisfied; in that case the proof is more complicated due to the presence of a variety of error terms in \eqref{H^n and U_n}. See \cite[Section 5.3]{EK4} for more details.

From \cite{EK1}, Lemmas 5.3 and 7.9, we recall the expansion in nonbacktracking powers of $H$.
\begin{lemma} \label{lemma: Uexp}
For $t \geq 0$ we have
\begin{equation} \label{Chebyshev expansion of propagator}
\ee^{-\ii t H/2} \;=\; \sum_{n \geq 0} a_n(t) H^{(n)}\,,
\end{equation}
where
\begin{equation} \label{def of an}
a_n(t) \;\deq\; \sum_{k \geq 0} \frac{\alpha_{n + 2k}(t)}{(M - 1)^k}\,, \qquad \alpha_k(t) \;\deq\; 2 (- \ii)^k 
\frac{k+1}{t} J_{k + 1}(t)
\end{equation}
and $J_\nu$ denotes the $\nu$-th Bessel function of the first kind.
\end{lemma}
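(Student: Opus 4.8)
The plan is to derive \eqref{Chebyshev expansion of propagator} from two essentially independent ingredients: a classical scalar expansion of $\ee^{-\ii t x}$ in Chebyshev polynomials of the second kind, and the algebraic inversion of the relation \eqref{H^n and U_n}. Since $H = H^*$, the spectral theorem reduces the operator statement to the scalar identity
\begin{equation*}
\ee^{-\ii t x} \;=\; \sum_{n \geq 0} \alpha_n(t) \, U_n(x) \qquad (x \in \R)\,,
\end{equation*}
with $\alpha_n(t)$ as in \eqref{def of an}; once this is known one substitutes $x = H/2$ and rewrites each $U_n(H/2)$ in terms of the nonbacktracking powers $H^{(m)}$ using \eqref{H^n and U_n}.

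To prove the scalar identity I would start from the Jacobi--Anger expansion $\ee^{\ii z \cos\theta} = \sum_{m \in \Z} \ii^m J_m(z) \ee^{\ii m\theta}$, multiply both sides by $\ee^{\ii\theta} - \ee^{-\ii\theta} = 2\ii\sin\theta$, shift the summation index, and apply the recursion $J_{m-1}(z) + J_{m+1}(z) = (2m/z) J_m(z)$ together with $J_{-m}(z) = (-1)^m J_m(z)$ to collapse the two-sided sum to $\sin\theta \, \ee^{\ii z\cos\theta} = -2\ii \sum_{m \geq 1} \ii^m (m/z) J_m(z) \sin(m\theta)$. Taking $z = -t$, dividing by $\sin\theta$, relabelling $m = n+1$, and recognizing $\sin((n+1)\theta)/\sin\theta = U_n(\cos\theta)$ from \eqref{definition of Un} then gives exactly $\ee^{-\ii t\cos\theta} = \sum_{n \geq 0} \alpha_n(t) U_n(\cos\theta)$ after the simplification $2\ii(-\ii)^{n+1} = 2(-\ii)^n$ of the powers of $\ii$; the case $t = 0$ is recovered in the limit, with $\alpha_0(0) = 1$ and $\alpha_n(0) = 0$ for $n \geq 1$. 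The identity, a priori valid only for $x = \cos\theta \in [-1,1]$, extends to all $x \in \C$ because the elementary bound $\abs{J_{n+1}(t)} \leq (\abs{t}/2)^{n+1}/(n+1)!$ yields $\abs{\alpha_n(t)} \leq (\abs{t}/2)^n / n!$, so that $\sum_n \alpha_n(t) U_n(x)$ converges absolutely and locally uniformly in $x$; in particular it may be applied to the spectrum of $H/2$ — which need not be contained in $[-1,1]$ — with no restriction, giving $\ee^{-\ii tH/2} = \sum_{n \geq 0} \alpha_n(t) U_n(H/2)$ in operator norm.

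It remains to invert \eqref{H^n and U_n}. Reading that relation as $U_n(H/2) = H^{(n)} + (M-1)^{-1} U_{n-2}(H/2)$ and iterating -- with the convention that $H^{(m)}$ and $U_m(H/2)$ vanish for $m < 0$, so that \eqref{H^n and U_n} holds uniformly in $n \geq 0$ -- yields the geometric inversion $U_n(H/2) = \sum_{k \geq 0} (M-1)^{-k} H^{(n-2k)}$. Substituting this into $\ee^{-\ii tH/2} = \sum_n \alpha_n(t) U_n(H/2)$, interchanging the two sums -- legitimate since $\abs{\alpha_n(t)}$ decays super-exponentially in $n$ while $\norm{U_m(H/2)}$, and hence $\norm{H^{(m)}}$, grows at most exponentially in $m$ -- and setting $m = n - 2k$ produces
\begin{equation*}
\ee^{-\ii tH/2} \;=\; \sum_{m \geq 0} \pbb{\sum_{k \geq 0} \frac{\alpha_{m+2k}(t)}{(M-1)^k}} H^{(m)} \;=\; \sum_{m \geq 0} a_m(t) \, H^{(m)}\,,
\end{equation*}
which is \eqref{Chebyshev expansion of propagator}. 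The argument is thus largely mechanical; the only points that require genuine care are the exact bookkeeping of signs and combinatorial factors in the Bessel manipulations of the middle step, and checking via the decay of $\alpha_n(t)$ that every rearrangement of the (double) series is justified. These, rather than any conceptual obstacle, are where the proof must be executed carefully.
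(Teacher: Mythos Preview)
Your proof is correct and complete. The paper itself does not prove this lemma at all --- it simply recalls it from \cite{EK1}, Lemmas 5.3 and 7.9. What you wrote is essentially the standard derivation (and in fact the one used in \cite{EK1}): the Jacobi--Anger expansion gives $\ee^{-\ii t x} = \sum_n \alpha_n(t) U_n(x)$, and the geometric inversion of \eqref{H^n and U_n} converts $U_n(H/2)$ into a finite sum of nonbacktracking powers. Your bookkeeping of the Bessel identities and signs is right, and the convergence justifications (via $\abs{\alpha_n(t)} \leq (\abs{t}/2)^n / n!$ against at most exponential growth of $\norm{U_n(H/2)}$) are exactly what is needed to extend the scalar identity beyond $[-1,1]$ and to legitimize the double-sum interchange.

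One small remark: the convention you impose, that $U_m(H/2) = 0$ for $m < 0$, is the right one for making \eqref{H^n and U_n} hold at $n = 0$ (since the analytic continuation $U_{-2}(\cos\theta) = -1$ would not work there); you state this, so there is no gap.
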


Throughout the following we denote by $\arcsin$ the analytic branch of $\arcsin$ extended to the real axis by continuity from the upper half-plane. 
The following coefficients will play a key role in the expansion.
For $n \in \N$ and $E \in \R$ define
\begin{equation*}
\gamma_n(E) \;\deq\; \int_0^\infty \dd t \, \ee^{\ii E t}\, a_n(t)\,.
\end{equation*}

\begin{lemma}
We have
\begin{equation} \label{claim about gamma n}
\gamma_n(E)
\;=\; \frac{2 (-\ii)^n \ee^{\ii (n+1) \arcsin E}}{1 - (M - 1)^{-1} \ee^{2\ii \arcsin E}}\,.
\end{equation}
\end{lemma}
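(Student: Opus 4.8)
The plan is to compute $\gamma_n(E) = \int_0^\infty \dd t \, \ee^{\ii E t} a_n(t)$ directly from the series representation of $a_n$ in \eqref{def of an}. First I would substitute the definition $a_n(t) = \sum_{k \geq 0} (M-1)^{-k} \alpha_{n+2k}(t)$ and interchange the sum over $k$ with the time integral (justified for $\im E > 0$, then extended to the real axis by the continuity convention on $\arcsin$ and the fact that $|M-1|^{-1} < 1$ provides the necessary geometric decay). This reduces the problem to computing the single Laplace-type integral
\begin{equation*}
\int_0^\infty \dd t \, \ee^{\ii E t} \, \alpha_m(t) \;=\; 2(-\ii)^m (m+1) \int_0^\infty \dd t \, \ee^{\ii E t} \, \frac{J_{m+1}(t)}{t}
\end{equation*}
for each $m = n + 2k$.

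The key analytic input is the classical formula for the Laplace/Fourier transform of $J_\nu(t)/t$: for $\nu \geq 1$ one has $\int_0^\infty \ee^{\ii E t} t^{-1} J_\nu(t) \, \dd t = \nu^{-1} \ee^{\ii \nu \arcsin E}$ (valid for $\im E > 0$, with the branch of $\arcsin$ fixed by continuity from the upper half-plane, and extended to real $E \in (-1,1)$). Applying this with $\nu = m+1$ gives $\int_0^\infty \ee^{\ii E t} \alpha_m(t) \, \dd t = 2(-\ii)^m \ee^{\ii(m+1)\arcsin E}$. I would either cite this Bessel transform identity from a standard reference (e.g.\ Watson or Gradshteyn--Ryzhik) or derive it quickly from the generating-function identity $\ee^{\ii t \sin\theta} = \sum_\nu \ee^{\ii\nu\theta} J_\nu(t)$ together with the integral $\int_0^\infty \ee^{\ii E t} t^{-1}(\,\cdot\,)$; alternatively, one can recognize that $\gamma_n$ is the generating function obtained by taking the Laplace transform of \eqref{Chebyshev expansion of propagator} and matching against the resolvent $(H/2 - E)^{-1}$, whose spectral representation involves exactly $\ee^{\ii\arcsin E}$ through the semicircle law.

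Having the per-term value, I substitute back:
\begin{equation*}
\gamma_n(E) \;=\; \sum_{k \geq 0} \frac{2(-\ii)^{n+2k} \ee^{\ii(n+2k+1)\arcsin E}}{(M-1)^k} \;=\; 2(-\ii)^n \ee^{\ii(n+1)\arcsin E} \sum_{k \geq 0} \pbb{\frac{-\ee^{2\ii\arcsin E}}{M-1}}^k\,,
\end{equation*}
using $(-\ii)^{2k} = (-1)^k$. Summing the geometric series (whose ratio has modulus $(M-1)^{-1} < 1$ for $E$ real in the bulk, since $|\ee^{2\ii\arcsin E}| = 1$ there) yields $(1 + (M-1)^{-1}\ee^{2\ii\arcsin E})^{-1}$. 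To match the stated form \eqref{claim about gamma n}, which has a minus sign in the denominator, I would note the sign bookkeeping: the companion term in \eqref{H^n and U_n} and the precise definition of $\alpha_k$ with the factor $(-\ii)^k$ need to be tracked carefully, and a sign in the geometric ratio should be reconciled against the convention $\alpha_k(t) = 2(-\ii)^k (k+1) t^{-1} J_{k+1}(t)$ — this is the one place where I would be most careful, since an overlooked factor of $\ii^2$ or a misattributed branch of $\arcsin$ would flip the sign. The main obstacle is thus not conceptual but is the rigorous justification of the two interchanges of sum and integral (convergence of $\sum_k$ and of the oscillatory $t$-integral at infinity), which I would handle by first working with $\im E > 0$ where everything converges absolutely, obtaining \eqref{claim about gamma n} as an identity of analytic functions there, and then passing to the boundary $E \in (-1+\kappa, 1-\kappa)$ using that both sides extend continuously (the right-hand side manifestly, the left-hand side by the decay of $a_n(t)$ inherited from Bessel asymptotics $J_\nu(t) = O(t^{-1/2})$).
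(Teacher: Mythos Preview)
Your approach is essentially identical to the paper's proof: expand $a_n$ via \eqref{def of an}, apply the Bessel identity $\int_0^\infty t^{-1}\ee^{\ii E t}J_\nu(t)\,\dd t = \nu^{-1}\ee^{\ii\nu\arcsin E}$ (which the paper obtains from Gradshteyn--Ryzhik 6.693.1--2 together with analytic continuation), and sum the resulting geometric series. Regarding the sign you flag: the paper's own computation \eqref{computation of ft of an} simply writes $(-\ii)^n$ in place of $(-\ii)^{n+2k}$ and thereby lands on the minus sign in the denominator without further comment, so the $(-1)^k$ you carefully track is not visible there either---this is a bookkeeping wrinkle in the stated formula rather than a defect in your method.
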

\begin{proof}
Using \eqref{def of an} we find
\begin{equation} \label{computation of ft of an}
\gamma_n(E) \;=\; \sum_{k = 0}^\infty \frac{\int_0^\infty \dd t \, \ee^{\ii E t} \, \alpha_{n + 2k}(t)}{(M - 1)^k} \;=\; \sum_{k = 0}^\infty \frac{2 (-\ii)^n \ee^{\ii (n + 2k +1) \arcsin E}}{(M - 1)^k}\,,
\end{equation}
where in the second step we used the identity
\begin{align} \label{key integral}
\int_0^\infty \dd t \; t^{-1} \ee^{\ii E t} J_\nu(t) \;=\; \frac{1}{\nu} \, \ee^{\ii \nu \arcsin E}\,,
\end{align}
which is an easy consequence of \cite[Formulas 6.693.1--6.693.2]{GR07} and analytic continuation. This concludes the proof.
\end{proof}

Define
\begin{equation} \label{def_F_eta}
F_{\phi_1, \phi_2}^\eta(E_1,E_2) \;\equiv\; F^\eta(E_1,E_2) \;\deq\; \avgb{\tr \phi_1^\eta (H/2 - E_1) \,; \tr \phi_2^\eta (H/2 - E_2)}\,,
\end{equation}
where we used the notation \eqref{def_avg}.
Note that the left-hand side of \eqref{EY_result} may be written as
\begin{equation}\label{ThetatoF}
\frac{\avg{Y^\eta_{\phi_1}(E_1) \, ; Y^\eta_{\phi_2}(E_2)}}{\avg{Y^\eta_{\phi_1}(E_1)} \avg{Y^\eta_{\phi_2}(E_2)}} \;=\; \frac{1}{N^2} \frac{ F^\eta(E_1,E_2)}{\E Y^\eta_{\phi_1}(E_1) \, \E Y^\eta_{\phi_2}(E_2)}\,.
\end{equation}
The expectations in the denominator are easy to compute using the local semicircle law for band matrices; see
Lemma~\ref{lem:EY} below. Our main goal is to compute $ F^\eta(E_1,E_2)$.

Throughout the following we use the abbreviation
\begin{equation}\label{phipsi}
\psi(E) \;\deq\; \phi(-E)\,,
\end{equation}
and define $\psi^\eta$, $\psi_i$, and $\psi_i^\eta$ similarly similarly in terms of $\phi^\eta$, $\phi_i$, and $\phi_i^\eta$.
We also use the notation
\begin{equation} \label{def of convolution}
( \varphi * \chi )(E) \;\deq\; \frac{1}{2 \pi} \int \dd E' \, \varphi(E - E') \, \chi(E')
\end{equation}
to denote convolution.  The normalizing factor $(2 \pi)^{-1}$ is chosen so that $\wh {\varphi * \chi} = \wh \varphi \, \wh \chi$. Observe that
\begin{equation} \label{claim about gamma}
(\psi^\eta * \gamma_n)(E) \;=\; \int_0^\infty \dd t \, \ee^{\ii E t} \, \wh \phi(\eta t)\, a_n(t)\,.
\end{equation}
We note that in the case where $\phi(E) = \frac{2}{E^2 + 1}$, we have $\wh \phi(t) = \ee^{-\abs{t}}$. Hence \eqref{claim about gamma} implies
\begin{equation} \label{Cauchy case identity}
(\psi^\eta * \gamma_n)(E) \;=\; \int_0^\infty \dd t \, \ee^{\ii (E + \ii \eta) t} \, a_n(t) \;=\; \gamma_n(E + \ii \eta)\,.
\end{equation}
This may also be interpreted using the identity
\begin{equation*}
\frac{1}{\pi} \int \dd E' \, \ee^{\ii n \arcsin E'} \frac{\eta}{(E -E')^2 + \eta^2} \;=\; \ee^{\ii n \arcsin(E + \ii \eta)}\,.
\end{equation*}
We now return to the case of a general real $\phi$. Since $\phi$ is real, we have $\ol{\wh \phi(t)} = \wh \phi(-t)$.  We may therefore use Lemma \ref{lemma: Uexp} and Fourier transformation to get
\begin{multline} \label{calculation of phi eta}
\phi^\eta (H/2 - E) \;=\; \int_{-\infty}^\infty \dd t \,  \wh \phi(\eta t) \, \ee^{-\ii t (H/2 - E)}
\;=\; 2 \re \int_0^\infty \dd t \, \wh \phi(\eta t)   \, \ee^{\ii t E}  \, \ee^{-\ii t H/2}
\\
=\; 2 \re  \sum_{n = 0}^\infty H^{(n)} \int_0^\infty \dd t \, \wh \phi(\eta t)  \, \ee^{\ii t E} a_n(t)
\;=\; \sum_{n = 0}^\infty H^{(n)} \, 2 \re (\psi^\eta * \gamma_n)(E)\,,
\end{multline}
where $\re$ denotes the Hermitian part of a matrix, i.e.\ $\re A \deq (A + A^*)/2$, and in the last step we used \eqref{claim about gamma} and the fact that 
$H^{(n)}$ is Hermitian.
We conclude that
\begin{equation} \label{expansion without trunction}
F^\eta(E_1, E_2) \;=\; \sum_{n_1, n_2 \geq 0} 2 \re \pb{(\psi_1^\eta * \gamma_{n_1})(E_1)} \, 2 \re \pb{(\psi_2^\eta * \gamma_{n_2})(E_2)} \,  \avgb{\tr H^{(n_1)}\,; \tr H^{(n_2)}}\,.
\end{equation}

Because the combinatorial estimates of Section \ref{sec: main argument} deteriorate rapidly for $n \gg \eta^{-1}$, it is essential to cut off the terms $n > M^\mu$ in the expansion \eqref{expansion without trunction}, where $\rho < \mu < 1/3$. Thus, we choose a cutoff exponent $\mu$ satisfying $\rho < \mu < 1/3$. All of the estimates in this paper depend on $\rho, \mu$, and $\phi$; we do not track this dependence. The following result gives the truncated version of \eqref{expansion without trunction}, whereby the truncation is done in $n_i$ and in the support of $\wh \phi_i$.

\begin{proposition}[Path expansion with truncation] \label{prop: expansion with trunction}
Choose $\mu < 1/3$ and $\delta > 0$ satisfying $2 \delta < \mu - \rho < 3 \delta $. Define
\begin{equation} \label{definition of gamma tilde}
\wt \gamma_n(E, \phi) \;\deq\; \int_0^{M^{\rho + \delta}} \dd t \, \ee^{\ii E t} \, \wh \phi(\eta t)\, a_n(t)
\end{equation}
and
\begin{equation} \label{truncated series}
\wt F^\eta(E_1,E_2) \;\deq\; \sum_{n_1 + n_2 \leq M^\mu} 2 \re \pb{\wt \gamma_{n_1}(E_1,\phi_1)} \, 2 \re \pb{\wt \gamma_{n_2}(E_2, \phi_2)} \, \avgb{\tr H^{(n_1)}\,; \tr H^{(n_2)}}\,.
\end{equation}
Let $q > 0$ be arbitrary. Then for any $n\in \N$ and recalling \eqref{phipsi} we have the estimates
\begin{equation} \label{gamma - g}
\abs{(\psi_i^\eta * \gamma_{n})(E_i) - \wt \gamma_{n}(E_i, \phi_i)} \;\leq\; C_q M^{-q} \qquad (i = 1,2)
\end{equation}
and
\begin{equation} \label{F - wt F}
\absb{F^\eta(E_1,E_2) - \wt F^\eta(E_1,E_2)} \;\leq\; C_q N^2 M^{-q}.
\end{equation}
Moreover, for all $q > 0$ we have 
\begin{equation} \label{bound on gamma}
\absb{\wt \gamma_{n}(E_i, \phi_i)} +  \absb{(\psi_i^\eta * \gamma_{n})(E_i)} \;\leq\; \min \hb{C, C_q (\eta n)^{-q}}\,.
\end{equation}

If $\phi_1$ and $\phi_2$ are analytic in a strip containing the real axis, the factors $C_q M^{-q}$ on the right-hand sides of \eqref{gamma - g} and \eqref{F - wt F} may be replaced with $\exp(-M^c)$ for some $c > 0$, and the factor $C_q (\eta n)^{-q}$ on the right-hand side of \eqref{bound on gamma} by $\exp(- (\eta n)^c)$.
\end{proposition}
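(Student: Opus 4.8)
The plan is to establish the three estimates \eqref{gamma - g}, \eqref{bound on gamma}, \eqref{F - wt F} in that logical order, the heart of the matter being \eqref{bound on gamma}. The common starting point is the identity \eqref{claim about gamma}, which expresses $(\psi_i^\eta * \gamma_n)(E_i)$ as the oscillatory integral $\int_0^\infty \dd t\, \ee^{\ii E_i t}\, \wh\phi_i(\eta t)\, a_n(t)$, so that $\wt\gamma_n(E_i,\phi_i)$ of \eqref{definition of gamma tilde} is exactly this integral restricted to $t \leq M^{\rho+\delta}$. For \eqref{gamma - g} I would bound the tail $\int_{M^{\rho+\delta}}^\infty$ directly: on this range $\eta t \geq M^\delta$, so the rapid decay of $\wh\phi_i$ coming from the smoothness of $\phi_i$ (and its exponential decay $\absb{\wh\phi_i(s)} \leq C\ee^{-c\abs s}$ when $\phi_i$ is analytic in a strip) beats the at most polynomial size of $a_n(t)$. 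For the latter I record the uniform bound $\absb{a_n(t)} \leq C$, together with the fact that $a_n(t)$ is super-exponentially small in $n$ once $n \gtrsim t$; both follow from the series \eqref{def of an} and the elementary Bessel estimate $\absb{J_\nu(t)} \leq (t/2)^\nu/\nu!$, which is tiny for $\nu \gg t$. Hence the tail integral is $\leq C_q \int_{M^{\rho+\delta}}^\infty (\eta t)^{-q}\,\dd t \leq C_q M^{-\delta q}$, which is $\leq C_{q'} M^{-q'}$ after renaming. The boundedness halves of \eqref{bound on gamma} are immediate from \eqref{claim about gamma n}: for real $E$ one has $\im\arcsin E \geq 0$, hence $\absb{\ee^{2\ii\arcsin E}} \leq 1$ and $\absb{\gamma_n(E)} \leq 2/(1-(M-1)^{-1}) \leq 4$, whence $\absb{(\psi_i^\eta*\gamma_n)(E_i)} \leq 4\norm{\phi_i}_1/(2\pi) \leq C$, and $\absb{\wt\gamma_n(E_i,\phi_i)} \leq C$ follows via \eqref{gamma - g}.

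The main work is the rapid decay $\absb{(\psi_i^\eta*\gamma_n)(E_i)} \leq C_q(\eta n)^{-q}$. In the case \textbf{(C1)} it is essentially free: by \eqref{Cauchy case identity}, $(\psi^\eta*\gamma_n)(E) = \gamma_n(E+\ii\eta)$, and since $\im\arcsin(E+\ii\eta) = \eta(1-E^2)^{-1/2} + O(\eta^2) \geq c\eta$ for $E \in [-1+\kappa, 1-\kappa]$ and $\eta$ small, the explicit formula yields $\absb{\gamma_n(E+\ii\eta)} \leq 4\ee^{-c(n+1)\eta}$, which is stronger than claimed. For general $\phi_i$ satisfying \textbf{(C2)} I would instead integrate by parts repeatedly in $(\psi_i^\eta*\gamma_n)(E_i) = \frac{1}{2\pi}\int \psi_i^\eta(E_i-E')\,\gamma_n(E')\,\dd E'$, writing $\gamma_n(E') = g_n(E')\,\ee^{\ii(n+1)\arcsin E'}$ with $g_n$ a smooth amplitude bounded, with its derivatives, uniformly in $n$ and $M \geq 3$, and using $\frac{\dd}{\dd E'}\ee^{\ii(n+1)\arcsin E'} = \ii(n+1)(1-E'^2)^{-1/2}\ee^{\ii(n+1)\arcsin E'}$. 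Each integration by parts gains a factor $(n+1)^{-1}$ and moves a derivative onto $\psi_i^\eta$, at the cost of $\eta^{-1}$; after $q$ steps this produces $(\eta n)^{-q}$, the derivatives landing on the amplitude contributing only $O_q(1)$ on the region $\abs{E'-E_i} \leq \kappa/2$, where $\arcsin$ is analytic with controlled derivatives. The complementary region $\abs{E'-E_i} > \kappa/2$, where $\arcsin$ is singular, contributes $\leq C_q\eta^q$ by the rapid decay of $\psi_i^\eta$ and $\absb{\gamma_n} \leq 4$, since $E_i$ is bounded away from $\pm 1$ by \eqref{D leq kappa}; this is the only place that restriction enters. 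If $\phi_i$ is analytic in a strip one instead shifts the contour of the $E'$-integration (or of the $t$-integration in \eqref{claim about gamma}) off the real axis by $\sim\eta$, which produces the exponential factor $\ee^{-c(n+1)\eta}$ from $\gamma_n$ and hence the improved bound. Combined with boundedness this gives $\absb{(\psi_i^\eta*\gamma_n)(E_i)} \leq \min\hb{C,\, C_q(\eta n)^{-q}}$, and the same for $\wt\gamma_n$ via \eqref{gamma - g}, completing \eqref{bound on gamma}.

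For \eqref{F - wt F} I would compare the full series \eqref{expansion without trunction} with \eqref{truncated series}, splitting the error into two pieces. First, on $n_1+n_2 \leq M^\mu$ with $(\psi_i^\eta*\gamma_{n_i})(E_i)$ replaced by $\wt\gamma_{n_i}(E_i,\phi_i)$: each term is bounded, using \eqref{gamma - g}, boundedness, and the crude estimate $\absb{\avgb{\tr H^{(n_1)}\,; \tr H^{(n_2)}}} \leq N^2(n_1+n_2+1)^C$, by $CN^2 C_q M^{-q}(n_1+n_2+1)^C$; the trace estimate holds with overwhelming probability via \eqref{H^n and U_n} and $\absb{U_n(x)} \leq n+1$ for $\abs x \leq 1$ on the event $\norm{H/2} \leq 1+o(1)$, the exceptional event being negligible thanks to the deterministic bound $\norm{H} \leq \max_x\sum_y\sqrt{S_{xy}} \leq C\sqrt M$. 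Summing over the $\leq M^{2\mu}$ such terms gives $\leq N^2 C_q M^{2\mu(C+1)-q} \leq N^2 C_{q'}M^{-q'}$. Second, the genuine tail $\sum_{n_1+n_2>M^\mu}$ of \eqref{expansion without trunction}: whenever $n_1+n_2 > M^\mu$ at least one index, say $n_1$, exceeds $M^\mu/2$, so $\eta n_1 \geq \frac12 M^{\mu-\rho} \geq \frac12 M^{2\delta}$ by the constraint $\mu-\rho > 2\delta$. Bounding that factor by $C_p(\eta n_1)^{-p}$ with $p$ large and the other by $\min\hb{C,\, C_r(\eta n_2)^{-r}}$ with $r$ merely a fixed power large enough for the $n_2$-sum to converge against $(n_2+1)^C$, one finds the tail is $\leq N^2 C M^{\mu(C+1)+\rho r - p(\mu-\rho)} \leq N^2 C_{q'}M^{-q'}$ once $p$ is large: only the \emph{large} index needs a high power of the decay, so the unbounded prefactor $M^{\rho p}$ from $\eta^{-p}$ is overcome by $M^{-p(\mu-\rho)}$, while the small index contributes only the fixed power $M^{\rho r}$. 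Indices beyond every polynomial in $N$, where the probabilistic norm bound is unavailable, are handled against the deterministic $\norm{H^{(n)}} \leq (C\sqrt M)^n$ using that the integrand in \eqref{claim about gamma} is then super-exponentially small, the relevant $t$'s satisfying $t \lesssim \eta^{-1} \ll n$.

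I expect \eqref{bound on gamma}, and in particular keeping the integration-by-parts bookkeeping uniform in $n$ and $M$ while disposing of the singular region near $\pm 1$, to be the main obstacle; given \eqref{bound on gamma} and the elementary observation $\mu-\rho > 2\delta > 0$, which converts ``large $n$'' into a gain of arbitrarily high order, the estimates \eqref{gamma - g} and \eqref{F - wt F} are routine. The analytic-strip improvements throughout amount to replacing the polynomial decay of $\wh\phi_i$ by exponential decay in the $t$-truncation (turning $M^{-q}$ into $\ee^{-M^c}$) and replacing integration by parts by contour shifting in \eqref{bound on gamma} (turning $(\eta n)^{-q}$ into $\ee^{-(\eta n)^c}$).
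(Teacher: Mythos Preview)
The paper itself does not prove this proposition; it is deferred in one line to \cite[Appendix A]{EK4}. Your outline is therefore not competing with anything in the present paper, and the overall architecture you propose --- estimate the $t$-tail via decay of $\wh\phi_i$ for \eqref{gamma - g}, use the explicit formula \eqref{claim about gamma n} for the bounded part of \eqref{bound on gamma}, exploit the $n$-oscillation of $\gamma_n$ for the decaying part, and then feed everything into \eqref{expansion without trunction} for \eqref{F - wt F} --- is the natural one and almost certainly what \cite{EK4} does. Two of your ingredients are cleanly correct: the uniform bound $\abs{a_n(t)}\le C$ does follow from $\sum_k\abs{\alpha_k(t)}^2=1$ (Parseval for the Chebyshev expansion of $e^{-itx}$), and the reduction $|Y_i-\wt Y_i|\le N\eta^{-1}\int_{|s|>M^\delta}|\wh\phi_i|$ via the spectral calculus disposes of the $\gamma\to\wt\gamma$ replacement without touching any trace bound.

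There is, however, a soft spot in your integration-by-parts proof of the decay in \eqref{bound on gamma} for case \textbf{(C2)}. After $q$ integrations by parts on the region $\abs{E'-E_i}\le\kappa/2$ you correctly obtain $C_q(\eta n)^{-q}$, but the complementary region, where you invoke only $\abs{\gamma_n}\le 4$ and the decay of $\psi_i^\eta$, contributes a term of order $C_p\eta^{p-1}$ that is small in $\eta$ but \emph{does not decay in $n$}. For $n$ large (say $n\gg\eta^{-2}$) this exceeds $C_q(\eta n)^{-q}$, so the two pieces do not combine to the claimed bound uniformly in $n$. The fix is to stay in the $t$-representation \eqref{claim about gamma}: split at $t=n/2$, use $\abs{a_n(t)}\le C(t/2)^n/n!\le C(e/4)^n$ on $[0,n/2]$ and $\abs{a_n(t)}\le C$ together with $\int_{\eta n/2}^\infty\abs{\wh\phi_i}\le C_q(\eta n)^{-q}$ on $[n/2,\infty)$; after absorbing one stray factor $\eta^{-1}$ by increasing $q$ by one (legitimate since $\eta^{-1}\le(\eta n)$ on the relevant range $\eta n\ge 1$), this gives the bound cleanly. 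A related issue affects your \eqref{F - wt F} argument: the probabilistic trace bound $\abs{\tr H^{(n)}}\le N(n+1)^C$ relies on $\norm{H/2}\le 1+O(M^{-2/3})$, which only keeps $\abs{U_n(1+\epsilon)}$ polynomial for $n\lesssim M^{1/3}$; beyond that the bound deteriorates exponentially while your coefficient bound is merely polynomial. The clean route is the one you already hint at: once $n>M^\mu/2$, the \emph{truncated} coefficient $\wt\gamma_n$ is bounded by $C(M^{\rho+\delta}/n)^n/n!$ via the Bessel estimate (since the $t$-integral only runs to $M^{\rho+\delta}\ll n$), and this super-exponential factor beats the deterministic $\norm{H^{(n)}}\le (C\sqrt M)^n$ once one first passes from $F^\eta$ to $\avg{\wt Y_1;\wt Y_2}$ by the spectral-calculus step above, so that only $\wt\gamma$'s appear in the tail.
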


The proof of Proposition \ref{prop: expansion with trunction} is given in \cite[Appendix A]{EK4}. 

\subsection{Heuristic calculation of the leading term} \label{sec: heuristic leading term}

At this point we make a short digression to outline how we compute the leading term of  $F^\eta(E_1,E_2)$.  The precise calculation is given in the companion paper \cite{EK4}.  In Section \ref{sec: main argument} below, we express the right-hand side of \eqref{expansion without trunction} as a sum of terms indexed by graphs, reminiscent of Feynman graphs in perturbation theory.
We prove that the leading contribution is given by a certain set of relatively simple graphs, which we call the \emph{dumbbell skeletons}. Their value $\cal V_{\rm{main}}$ may be explicitly computed and is essentially given by
\begin{equation} \label{skeletons_approx}
 \cal V_{\rm{main}} \;\approx\; \sum_{b_1, b_2,b_3,b_4 = 0}^\infty 2 \re \pb{\gamma_{2 b_1 + b_3 + b_4} * \psi_1^\eta}(E_1) \, 2 \re \pb{\gamma_{2 b_2 + b_3 + b_4} * \psi^\eta_2}(E_2) \tr S^{b_3 + b_4}
\end{equation}
(see \eqref{common expression for V(D)} below for the precise statement). 
The summations represent ``ladder subdiagram resummations'' in the terminology of graphs.
Proving that the contribution of all other graphs is negligible, and hence that \eqref{skeletons_approx} gives the leading behaviour of \eqref{expansion without trunction}, represents the main work, and is done in Sections \ref{sec: main argument}. Assuming that this approximation is valid, we compute \eqref{skeletons_approx} as follows. We use
\begin{equation}\label{2re2re}
(2 \re x_1) (2 \re x_2) = 2  \re (x_1 \ol x_2 + x_1 x_2)
\end{equation}
on the right-hand side of \eqref{skeletons_approx}, and only consider the first resulting term; the second one 
will turn out to be subleading in the regime $\omega, \eta \ll \kappa$, owing to a phase cancellation. Recalling the definition of $\gamma_n$
from \eqref{computation of ft of an},  we find that the summations over $b_1, \dots, b_4$ are simply geometric series, so that
\begin{equation} \label{F_eta_approx}
F^\eta(E_1, E_2) \;\approx\; 2 \re \pBB{ 4 \frac{\ee^{\ii A_1}}{1 + \ee^{2 \ii A_1}} \frac{\ee^{-\ii A_2}}{1 + \ee^{-2 \ii A_2}} \tr \frac{\ee^{\ii (A_1 - A_2)} S}{\pb{1 - \ee^{\ii (A_1 - A_2)} S}^2}} * \psi_1^{\eta}(E_1) * \psi_2^{\eta}(E_2)\,,
\end{equation}
where we abbreviated $A_i \deq \arcsin E_i$, and wrote, by a slight abuse of notation, $(\varphi * \chi)(E) \equiv \varphi(E) * \chi(E)$.

In order to understand the behaviour of this expression, we make
some basic observations about the spectrum of $S$.
Since $S$ is translation invariant, i.e.\ $S_{x y} = S_{x - y \, 0}$, it may be diagonalized by Fourier transformation,
\begin{equation*}
\wh S_W(q) \;\deq\; \sum_{x \in \bb T}\ee^{-\ii q \cdot x / W}  S_{x0} \;\approx\; \int \ee^{-\ii q \cdot x} f(x) \, \dd x\,,
\end{equation*}
 where $f$ is the normalized indicator function of the unit ball in $\R^d$; in the last step we used the definition of $S$ and a Riemann sum approximation. (Note that, since $S$ lives on the scale $W$, it is natural to rescale the argument $q$ of the Fourier transform by $W^{-1}$.) From this representation it is not hard to see that $S \geq -1 + c$ for some constant $c > 0$. Moreover, for small $q$ we may expand $\wh S_W(q)$ to get $\wh S_W(q) \approx 1 - q \cdot D q$, where we defined the covariance matrix\footnote{To avoid confusion, we remark that this $D$ differs from the $D$ used in the introduction by a factor of order $W^{-2}$.} $D_W \equiv D = (D_{ij})$ of $S$ through
\begin{equation} \label{definition of D 2}
D_{ij} \;\deq\; \frac{1}{2}\sum_{x \in \bb T} \frac{x_i x_j}{W^2} S_{x0}\,.
\end{equation}
We deduce that $S$ has a simple eigenvalue at $1$, with associated eigenvector $(1,1, \dots, 1)$, and all remaining eigenvalues lie in the interval $[-1 + c, 1 - c (W/L)^2]$ for some small constant $c > 0$. Therefore the resolvent on the right-hand side of \eqref{F_eta_approx} is near-singular (hence yielding a large contribution) for $\ee^{\ii (A_1 - A_2)} \approx 1$. This implies that the leading behaviour of \eqref{F_eta_approx} is governed by small values of $q$ in Fourier space.

We now outline the computation of \eqref{F_eta_approx} in more detail. 
Let us first focus on the regime $\omega \gg \eta$, i.e.\ the regime from Theorem \ref{thm: Theta 1}. Thus, the function $\psi_i^\eta$ may be approximated by $2 \pi$ times a delta function, so that the convolutions may be dropped. What therefore remains is the calculation of the trace.
We write
$$ 
\alpha \;\deq\; \ee^{\ii (A_1 - A_2)} \;\approx\; 1 -\ii \omega (1 - E^2)^{-1/2} \;=\; 1-\ii \frac{2\omega}{\pi \nu}\,, \qquad \nu \;\equiv\; \nu(E)\,,
$$ 
in the regime $\omega\ll 1$. We use the Fourier representation of $S$ and only consider the contribution of small values of $q$.  After some elementary computations we get, for $d \leq 3$,
\begin{multline*}
\tr \frac{S}{(1 - \alpha S)^2} \;\approx\; \frac{L^d}{W^d} \int_{\R^d} \dd q \, \frac{\wh S_W(q)}{(1 - \alpha \wh S_W(q))^2} \;\approx\; \frac{L^d}{W^d}  \int_{\R^d} \frac{\dd q}{(1 - \alpha + q \cdot D q)^2}
\\
\approx\; \frac{L^d}{W^d}  \frac{1}{\sqrt{\det D}}  \pbb{\frac{2\omega}{\pi\nu}}^{d/2 - 2} \int_{\R^d} \frac{\dd q}{(\ii + q^2)^2}\,.
\end{multline*}
 A similar calculation may be performed for $d = 4$, which results in a logarithmic behaviour in $\omega$.
This yields the right-hand sides of \eqref{Theta_23} and \eqref{Theta_4}. 
For $d \leq 4$  the main contribution arises from the regime $q\approx 0$ and is therefore universal. 
If $d \geq 5$ the leading contribution to \eqref{skeletons_approx} arises from all values of $q$.
 While \eqref{skeletons_approx} may still be computed for $d \geq 5$, it loses its universal character and depends on the whole function $\wh S_W(q)$.

In the regime $\omega = 0$, i.e.\ the regime from Theorem \ref{thm: Theta 2}, we introduce $e \deq \psi_1 * \phi_2$ (recall \eqref{phipsi}) and write (for simplicity setting $E_1 = E_2 = 0$ and $d \leq 3$)
\begin{align*}
\tr \frac{\ee^{\ii (A_1 - A_2)} S}{\pb{1 - \ee^{\ii (A_1 - A_2)} S}^2} * \psi_1^{\eta}(E_1) * \psi_2^{\eta}(E_2)
&\;\approx\;
\int_{\R} \dd v \, e^\eta(v) \tr \frac{S}{\pb{1 - (1 - \ii v) S}^2}
\\
&\;\approx\; \frac{L^d}{W^d} \int_\R \dd v \, e^\eta(v) \int_{\R^d} \dd q \frac{1}{\pb{\ii v + q \cdot D q}^2}
\\
&\;=\; \frac{C L^d}{W^d} \int_{\R^d} \dd q  \, \int_0^\infty \dd t \, \ee^{-t q \cdot D q} \, t \, \ol {\wh e(\eta t)}
\\
&\;=\; \frac{C L^d}{W^d \sqrt{\det D}} \int_0^\infty \dd t \, t^{1 - d/2} \, \ol {\wh e(\eta t)}\,,
\end{align*}
where in the third step we used an elementary identity of Fourier transforms. 
 From this expression it will be easy to conclude  \eqref{Theta3D0}, 
and an analogous calculation for $d = 4$ yields \eqref{Theta4D0}.

\section{Proof of Theorems \ref{thm: main result}--\ref{thm: Theta 1}  for $\beta = 2$}
 \label{sec: main argument}

In this section we prove Theorems \ref{thm: main result}--\ref{thm: Theta 1} by computing the limiting behaviour of $\wt F^\eta(E_1, E_2)$.
For simplicity, throughout this section we assume that we are in the complex Hermitian case, $\beta = 2$. The real symmetric case, $\beta = 1$, can be handled by a simple extension of the arguments of this section, and is presented in Section \ref{sec:sym}. 

Due to the independence of the matrix entries (up to the Hermitian symmetry), the expectation of a product of matrix entries in \eqref{truncated series} can be computed simply by counting how many times a matrix entry (or its conjugate) appears. We therefore group these factors according to the equivalence relation $H_{xy} \sim H_{uv}$ if $\{x, y\}=\{ u, v\}$ as (unordered) sets. Since $\E H_{xy}=0$, every block of the associated partition must contain at least two elements; otherwise the corresponding term is zero. If $H$ were Gaussian, then by Wick's theorem only partitions with blocks of size exactly two (i.e.\ \emph{pairings}) would contribute. Since $H$ is not Gaussian, we have to do deal with blocks of arbitrary size; nevertheless, the pairings yield the main contribution.

In order to streamline the presentation and focus on the main ideas of the proof, in the current paper we do not deal with certain errors resulting from partitions that contain a block of size greater than two (i.e.\ that are not pairings), and from some exceptional coincidences among summation indices. 
Ignoring these issues results in three simplifications, denoted by {\bf (S1)}--{\bf (S3)} below, to the argument. Throughout the proof we use the letter $\cal E$ to denote any error term arising from these simplifications. 
In the companion paper \cite{EK4}, we show that 
the error terms $\cal E$ are indeed negligible; see Proposition \ref{prop:calE} below. 
The proof of Proposition \ref{prop:calE} is presented in a separate paper, as it requires a different argument from the one in the current paper.

In order clarify our main argument, it is actually helpful to generalize the assumptions on the matrix of variances $S$. This more general setup is also used in the generalized band matrix model 
 analysed in \cite{EK4} and outlined in Section \ref{sec:outlook}. 
 Instead of \eqref{step S}, we set
\begin{equation} \label{general S}
S_{xy} \;\deq\; \frac{1}{M - 1} f \pbb{\frac{[x - y]_L}{W}}\,, \qquad M \;\deq\; \sum_{x \in \bb T} f \pbb{\frac{x}{W}}\,,
\end{equation}
where $[x]_L$ denotes the representative of $x \in \Z^d$ in $\bb T$, and 
$f \col \R^d \to \R$ is an even, bounded, nonnegative, piecewise\footnote{We say that $f$ is \emph{piecewise $C^1$} if there exists a finite collection of disjoint open sets $U_1, \dots, U_n$ 
 with piecewise $C^1$ boundaries,  whose closures cover $\R^d$, such that $f$ is $C^1$ on each $U_i$.} $C^1$ function, such that $f$ and $\abs{\nabla f}$ are integrable. We also assume that
\begin{equation} \label{moment of f}
\int \dd x \, f(x) \, \abs{x}^{4 + c} \;<\; \infty
\end{equation}
for some $c > 0$.

Note that $M$ and $W$ satisfy \eqref{link between M and W}. 
We introduce the covariance matrices of $S$ (see also \eqref{definition of D 2}) and $f$, defined through
\begin{equation} \label{definition of D}
D_{ij} \;\deq\; \frac{1}{2}\sum_{x \in \bb T} \frac{x_i x_j}{W^2} S_{x0} \,, \qquad (D_0)_{ij} \;\deq\; \frac{1}{2} \int_{\R^d} x_i x_j f(x) \, \dd x\,.
\end{equation}
It is easy to see that $D = D_0 + O(W^{-1})$.
We always assume that
\begin{equation} \label{D_bounded}
c \;\leq\; D_0 \;\leq\; C 
\end{equation}
in the sense of quadratic forms, for some positive constants $c$ and $C$. Note that, since \eqref{D_bounded} holds for $D_0$, it also holds for $D$ for large enough $W$.
In the case \eqref{step S} we have the explicit diagonal form 
\begin{equation}\label{Dconst}
D_0 \;=\; \frac{1}{2 (d+2)} \, I_{d}  \,.
\end{equation}
In addition, for $d = 2$ we introduce the quantities
\begin{equation} \label{def_Q}
Q \;\deq\; \frac{1}{32} \sum_{x \in \bb T} S_{x0} \, \absbb{ D^{-1/2} \, \frac{x}{W}}^4\,, \qquad
Q_0 \;\deq\; \frac{1}{32} \int_{\R^2} \absb{D_0^{-1/2} x}^4 \, f(x) \, \dd x\,,
\end{equation}
which also depend on the fourth moments of $S$ and $f$ respectively. 
 (Here $\abs{\cdot}$ denotes the Euclidean norm on $\R^2$.)  As above, it is easy to see that $Q = Q_0 + O(W^{-1})$.
In the case \eqref{step S} we have the explicit form 
\begin{equation} \label{Qconst}
Q_0 \;=\; \frac{2}{3}\,.
\end{equation}

The main result of this section is summarized in the following Proposition \ref{prop: main}, which establishes the leading asymptotics of $\wt F^\eta(E_1,E_2)$, defined in \eqref{truncated series}, 
for small $\omega = E_2 - E_1$. 
Once Proposition \ref{prop: main} is proved, our main results, Theorems \ref{thm: main result}--\ref{thm: Theta 1} will follow easily (see Section \ref{sec:conclusion}). Recall the definition of $R_2$ from \eqref{def_R2}. 
\begin{proposition} \label{prop: main}
Suppose that the assumptions of the first paragraph of Theorem \ref{thm: main result} as well as the assumptions on $H$ made in \eqref{general S}--\eqref{D_bounded} 
hold. Then there is a constant $c_0 > 0$ such that, for any $E_1,E_2$ satisfying \eqref{D leq kappa} for small enough $c_* > 0$, we have
\begin{equation*}
\wt F^\eta(E_1, E_2) \;=\; \cal V_{\rm{main}} + \frac{N}{M} O \pb{M^{-c_0} R_2(\omega + \eta)} \,,
\end{equation*}
where the leading contribution $\cal V_{\rm{main}} \equiv \cal (V_{\rm{main}})^\eta_{\phi_1, \phi_2}(E_1,E_2)$ satisfies the following estimates.
\begin{enumerate}
\item
Suppose that \eqref{eta_Delta_2} holds. Then for $d = 1, 2, 3$  we have
\begin{equation} \label{V3C2}
\cal V_{\rm{main}} \;=\;
\frac{ (2/\pi)^{d/2}}{\nu(E)^2 \sqrt{\det D}}\pbb{\frac{L}{2 \pi W}}^d \pbb{\frac{\omega}{\nu(E)}}^{d/2 - 2} \pB{K_d + O \pb{\omega^{1/2} + M^{-\tau/2}}}
\end{equation}
where $K_d$ was defined in \eqref{def_K13}. Moreover, for $d = 4$ we have
\begin{equation} \label{V4C2}
\cal V_{\rm{main}} \;=\; \frac{8}{\nu(E)^2\sqrt{\det D}} \pbb{\frac{L}{2 \pi  W}}^d
 \pb{\abs{\log \omega}  + O (1)}\,.
\end{equation}
\item
Suppose that \eqref{eta_Delta_2} holds and that $d = 2$. If $\phi_1$ and $\phi_2$ satisfy \textbf{(C1)} then
\begin{equation} \label{V2C1}
\cal V_{\rm{main}} \;=\; \frac{8}{\pi\nu(E)^2 \sqrt{\det D}} \pbb{\frac{L}{2 \pi W}}^2  \pBB{\frac{\pi\eta \nu(E)  } {\omega^2 + 4 \eta^2} +
 \p{Q - 1} \abs{\log \omega} + O(1)}\,,
\end{equation}
and if $\phi_1$ and $\phi_2$ satisfy \textbf{(C2)} then
\begin{equation} \label{V2C2}
\cal V_{\rm{main}} \;=\; \frac{8}{\pi \nu(E)^2 \sqrt{\det D}} \pbb{\frac{L}{2 \pi W}}^2 \pb{  (Q - 1) \abs{\log \omega} + O(1)}\,.
\end{equation}
\item
Suppose that $\omega = 0$. Then the exponent $\mu$ from Proposition \ref{prop: expansion with trunction} may be chosen so that there exists an exponent $c_1 > 0$ such that for $d =1,2,3$ we have
\begin{equation} \label{V3C2D0}
\cal V_{\rm{main}} \;=\; \frac{2^{d/2}}{ \nu(E)^2\sqrt{\det D}} \pbb{\frac{L}{2 \pi W}}^d
  \pbb{\frac{\eta}{\nu(E)}}^{d/2 - 2} \pb{ V_d(\phi_1, \phi_2) + O(M^{-c_1})}
\end{equation}
and for $d = 4$ we have
\begin{equation} \label{V4C2D0}
\cal V_{\rm{main}} \;=\; \frac{4}{\nu(E)^2 \sqrt{\det D}} \pbb{\frac{L}{2 \pi W}}^4 \, \pb{V_4(\phi_1, \phi_2) \abs{\log \eta} + O(1)}\,.
\end{equation}
\end{enumerate}
\end{proposition}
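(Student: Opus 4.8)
The plan is to prove Proposition~\ref{prop: main} by a detailed graphical analysis of the truncated expansion $\wt F^\eta(E_1,E_2)$ from \eqref{truncated series}. The first step is to expand each factor $\tr H^{(n_i)}$ as a sum over closed nonbacktracking paths of length $n_i$ on $\bb T$, using the definition \eqref{def: nb}, and then to take the expectation of the product. Since (for $\beta = 2$) the entries of $H$ are independent up to the Hermitian symmetry and have mean zero, $\avgb{\tr H^{(n_1)}\,;\tr H^{(n_2)}}$ is a sum over pairs of paths whose edges, grouped by the relation identifying $H_{xy}$ with $H_{uv}$ whenever $\{x,y\}=\{u,v\}$, form a partition all of whose blocks have size at least two and which \emph{connects} the two loops (otherwise the covariance vanishes). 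Invoking the simplifications \textbf{(S1)}--\textbf{(S3)}---the error terms $\cal E$ arising from them being proved negligible in \cite{EK4}, see Proposition~\ref{prop:calE}---we may restrict to pairings and to generic (non-coinciding) summation labels, so that each surviving term is indexed by a graph in which the two loops are glued along pairs of oppositely traversed edges.

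The second step is to identify, among these graphs, the ones carrying the leading contribution. Organizing the pairings shows that each graph reduces, after contracting maximal \emph{ladders} (runs of consecutive parallel rungs), to a \emph{skeleton}, and the skeletons contributing at leading order are precisely the eight (sixteen when $\beta=1$) \emph{dumbbell skeletons} $D_1,\dots,D_8$ of Figure~\ref{fig: dumbbell}---two loops joined by a single ladder bridge, the analogues of the diffusion and cooperon diagrams. I expect this step to be the main obstacle: the individual graph values are highly oscillatory, and a term-by-term bound in absolute value diverges (exponentially in $L$), so one must exploit the \emph{2/3-rule} (Lemma~\ref{lem:2/3_rule})---which uses the nonbacktracking structure to gain a factor $M^{-1/3}$ per vertex beyond a spanning tree---together with the geometric-series structure produced by the ladder resummation, in order to bound the non-dumbbell graphs and reach the stated error $\tfrac{N}{M}\,O\pb{M^{-c_0} R_2(\omega+\eta)}$. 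It is this mechanism that forces the restriction $\rho<\mu<1/3$.

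The third step is the explicit ladder resummation of the dumbbell skeletons. Summing the rung indices along the bridge and the internal loop indices turns $\cal V_{\mathrm{main}}$ into the geometric-series expression \eqref{common expression for V(D)}, heuristically \eqref{skeletons_approx}; using \eqref{2re2re} and discarding the second, phase-cancelling (hence subleading in the regime $\omega,\eta\ll\kappa$) term leads to \eqref{F_eta_approx}. One then diagonalizes the translation-invariant matrix $S$ by Fourier transformation, using that $\wh S_W(q)$ has a simple maximum equal to $1$ at $q=0$ with $\wh S_W(q)\approx 1-q\cdot D q$, while all other eigenvalues of $S$ lie in $[-1+c,1-c(W/L)^2]$; consequently the resolvent $\ee^{\ii(A_1-A_2)}S\big(1-\ee^{\ii(A_1-A_2)}S\big)^{-2}$, with $A_i\deq\arcsin E_i$, is near-singular only near $q=0$, and the trace reduces to an integral over $\R^d$.

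The final step is to carry out these integrals. With $\alpha=\ee^{\ii(A_1-A_2)}\approx 1-\frac{2\ii\omega}{\pi\nu(E)}$ in the regime $\omega\ll1$, one obtains $\tr\frac{S}{(1-\alpha S)^2}\approx (L/W)^d(\det D)^{-1/2}\pbb{\frac{2\omega}{\pi\nu(E)}}^{d/2-2}\int_{\R^d}\dd q\,(\ii+\abs{q}^2)^{-2}$, which for $d=1,3$ yields the constants $K_d$ of \eqref{def_K13} and for $d=4$ a factor $\abs{\log\omega}$; combining with the prefactors produces \eqref{V3C2} and \eqref{V4C2}. For $\omega=0$ one keeps the convolutions with $\psi_i^\eta$, uses an elementary Fourier identity to rewrite the $q$-integral as $\int_0^\infty\dd t\,t^{1-d/2}\,\ol{\wh e(\eta t)}$ with $e\deq\psi_1*\phi_2$, and recognizes $V_d(\phi_1,\phi_2)$, giving \eqref{V3C2D0}--\eqref{V4C2D0}. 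Part~(ii), i.e.\ $d=2$ with $\omega\gg\eta$, needs extra care because $K_2=0$: one expands $\wh S_W(q)$ to fourth order, which brings in the constant $Q$ of \eqref{def_Q}, and in case \textbf{(C1)} one must additionally retain the contribution of the heavy Cauchy tail of $\psi_i^\eta$, which survives as the term $\frac{\pi\eta\nu(E)}{\omega^2+4\eta^2}$ in \eqref{V2C1}; this is precisely what distinguishes \eqref{V2C1} from \eqref{V2C2}.
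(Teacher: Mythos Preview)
Your outline is correct and follows the paper's own strategy: graphical expansion into connected pairings (with simplifications \textbf{(S1)}--\textbf{(S3)} deferred to \cite{EK4}), collapse to skeletons, identification of the dumbbell skeletons $D_1,\dots,D_8$ as leading, and Fourier analysis of the resolvent $\alpha S(1-\alpha S)^{-2}$ to extract the asymptotics of $\cal V_{\rm main}$.

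The one structural element you gloss over is that the paper does \emph{not} treat all non-dumbbell skeletons by a single mechanism combining the 2/3-rule with oscillations. It splits $\fra S^* = \fra S^\leq_K \sqcup \fra S^>_K$ at a fixed cutoff $K$. For large skeletons ($\abs{\Sigma}>K$) the 2/3-rule alone, with a brute term-by-term bound (no oscillation), already gives $\sum_{\Sigma\in\fra S^>_K}\abs{\cal V(\Sigma)}\leq C_K N M^{-2}$, since the combinatorial growth $\sim m!$ is beaten by $M^{(\mu-1/3)m}$. For small skeletons the 2/3-rule is too weak by itself: here one must sum the geometric series over each $b_\sigma$ to produce resolvents $Z(\alpha S)$, distinguish \emph{domestic} bridges ($\sigma\subset E(\cal C_i)$, with $\alpha=-\ee^{\pm 2\ii A_i}$ bounded away from $1$) from \emph{connecting} bridges ($\alpha=\ee^{\ii(A_1-A_2)}$, near-singular), and use the graph-theoretic fact (Lemmas~\ref{lem:saturated2/3}--\ref{lem:non_saturated2/3}) that any non-dumbbell skeleton either fails to saturate the 2/3-rule (gaining $M^{-1/3}$) or contains a domestic bridge (reducing the exponent of the large factor $M^\mu$ or $R_2$ by one). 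This dichotomy---and specifically the domestic-bridge argument---is the reason the dumbbells are singled out, and it is the part of your Step~2 that would need the most work to make rigorous.
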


The rest of this section is devoted to the proof of Proposition \ref{prop: main}.

\subsection{Introduction of graphs} \label{sec_41}

In order to express the nonbacktracking powers of $H$ in terms of the entries of $H$, it is convenient to index the two multiple summations arising from \eqref{def: nb} when plugged into \eqref{truncated series} using a graph.  We note that a similar graphical language was developed in \cite{EK1}, and many of basic definitions from Sections \ref{sec_41} and \ref{sec_42} (such as bridges, ladders, and skeletons) are similar to those from \cite{EK1}.  We introduce a directed graph $\cal C(n_1, n_2) \deq \cal C_1(n_1) \sqcup \cal C_2(n_2)$ defined as the disjoint union of a directed chain $\cal C_1(n_1)$ with $n_1$ edges and a directed chain $\cal C_2(n_2)$ with $n_2$ edges. Throughout the following, to simplify notation we often omit the arguments $n_1$ and $n_2$ from the graphs $\cal C$, $\cal C_1$, and $\cal C_2$. For an edge $e \in E(\cal C)$, we denote by $a(e)$ and $b(e)$ the initial and final vertices of $e$. Similarly, we denote by $a(\cal C_i)$ and $b(\cal C_i)$ the initial and final vertices of the chain $\cal C_i$. We call vertices of degree two \emph{black} and vertices of degree one \emph{white}. See Figure \ref{fig: cal C} for an illustration of $\cal C$ and for the convention of the orientation.

\begin{figure}[ht!]
\begin{center}
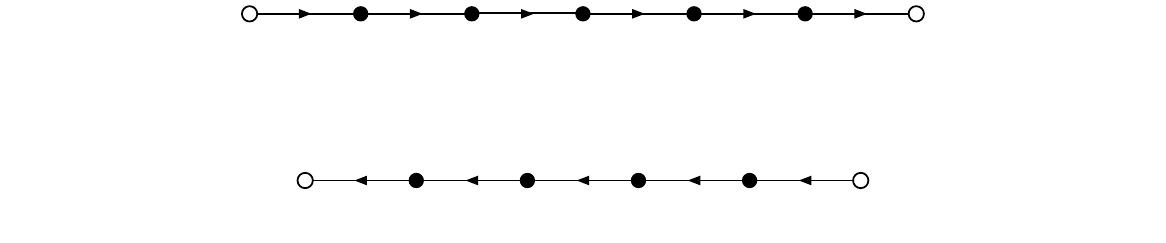
\end{center}
\caption{The graph $\cal C = \cal C_1 \sqcup \cal C_2$. Here we chose $n_1 = 6$ and $n_2 = 5$. We indicate the orientation of the chains $\cal C_1$ and $\cal C_2$ using arrows. In subsequent pictures, we systematically drop the arrows to avoid clutter, but we consistently use this orientation when drawing graphs. \label{fig: cal C}} 
\end{figure}

We assign a label $x_i \in \bb T$ to each vertex $i \in V(\cal C)$, and write $\f x = (x_i)_{i \in V(\cal C)}$.
For an edge $e \in E(\cal C)$ define the associated pairs of ordered and unordered labels
\begin{equation*}
x_{e} \;\deq\; (x_{a(e)}, x_{b(e)}) \,, \qquad [x_e] \;\deq\; \{x_{a(e)}, x_{b(e)}\}\,.
\end{equation*}
Using the graph $\cal C = \cal C(n_1, n_2)$ we may now write the covariance
\begin{equation} \label{exp_HH_sumx}
\avgb{\tr H^{(n_1)}\,; \tr H^{(n_2)}} \;=\; \E \qb{\pb{\tr H^{(n_1)}}\, \pb{\tr H^{(n_2)}}} - \E \pb{\tr H^{(n_1)}} \E \pb{\tr H^{(n_2)}} \;=\; \sum_{\f x \in {\bb T}^{V(\cal C)}} I(\f x) A(\f x)\,,
\end{equation}
where we introduced
\begin{equation} \label{definition of A}
A(\f x) \;\deq\;
\E \pBB{\prod_{e \in E(\cal C)} H_{x_e}} - \E \pBB{\prod_{e \in E(\cal C_1)} H_{x_e}} \E \pBB{\prod_{e \in E(\cal C_2)} H_{x_e}}
\end{equation}
and the indicator function 
\begin{equation} \label{def_I_ind}
I(\f x) \;\deq\; I_0(\f x) \prod_{\substack{i,j \in V(\cal C) \col \\ \dist(i,j) = 2}} \ind{x_i \neq x_j}\,, \qquad
I_0(\f x) \;\deq\; \ind{x_{a(\cal C_1)} = x_{b(\cal C_1)}} \ind{x_{a(\cal C_2)} = x_{b(\cal C_2)}}\,.
\end{equation}
The indicator function $I_0(\f x)$ implements the fact that the final and initial vertices of each chain have the same label, while $I(\f x)$ in addition implements the nonbacktracking condition.  When drawing $\cal C$ as in Figure \ref{fig: cal C}, we draw vertices of $\cal C$ with degree two using black dots, and vertices of $\cal C$ with degree one using white dots. The use of two different colours also reminds us that each black vertex $i$ gives rise to a nonbacktracking condition in $I(\f x)$, constraining the labels of the two neighbours of $i$ to be distinct.

In order to compute the expectation in \eqref{definition of A}, we decompose the label configurations $\f x$ according to partitions of $E(\cal C)$.
\begin{definition} \label{def:partitions}
We denote by $\fra P(U)$ for the set of partitions of a set $U$ and by $\fra M(U) \subset \fra P(U)$ the set of pairings (or matchings) of $U$.  (In the applications below the set $U$ will be either $E(\cal C)$ or $V(\cal C)$.) We call blocks of a pairing \emph{bridges}.  Moreover, for a label configuration $\f x \in \bb T^{V(\cal C)}$ we define the partition $P(\f x) \in \fra P(E(\cal C))$ as the partition of $E(\cal C)$ generated by the equivalence relation $e \sim e'$ if and only if $[x_e] = [x_{e'}]$. 
\end{definition}

Hence we may write
\begin{equation} \label{introduction of Gamma}
\sum_{\f x} I(\f x) A(\f x) \;=\; \sum_{\Pi \in \fra P(E(\cal C))} \sum_{\f x} \ind{P(\f x) = \Pi} I(\f x) A(\f x)\,.
\end{equation}
At this stage we introduce our first simplification.
\begin{enumerate}
\item[{\bf (S1)}]
We only keep the pairings  $\Pi \in \fra M(E(\cal C))$ in the summation \eqref{introduction of Gamma}.
\end{enumerate}
Using Simplification {\bf (S1)}, we write
\begin{equation} \label{sum over pairings after simplification}
\sum_{\f x} I(\f x) A(\f x) \;=\; \sum_{\Pi \in \fra M(E(\cal C))} \sum_{\f x} \ind{P(\f x) = \Pi} I(\f x) A(\f x) + \cal E\,.
\end{equation}
Here, as explained at the beginning of this section, we use the symbol $\cal E$ to denote an error term that arises from any simplification that we make. All such error terms are in fact negligible, as recorded in Proposition \ref{prop:calE} below, and proved in the companion paper \cite{EK4}. We use the symbol $\cal E$ without further comment throughout the following to denote such error terms arising from any of our simplifications {\bf (S1)}--{\bf (S3)}. 

Fix $\Pi \in \fra M(E(\cal C))$. In order to analyse the term resulting from the first term of \eqref{definition of A}, we write
\begin{equation} \label{sum over pairings after simplification 2}
\ind{P(\f x) = \Pi}  \, \E \pBB{\prod_{e \in E(\cal C)} H_{x_e}}
\;=\; \ind{P(\f x) = \Pi}  \, \pBB{\prod_{\{e, e'\} \in \Pi} \ind{x_e \neq x_{e'}} S_{x_e}}\,,
\end{equation}
where we used that $H_{x_e}$ and $H_{x_{e'}}$ are independent if $[x_e] \neq [x_{e'}]$  as well as $\E H_{xy}^2 = 0$ and $\E H_{xy} H_{yx} = S_{xy}$. Note that the indicator function $\ind{P(\f x) = \Pi}$ imposes precisely two things: first, if $e$ and $e'$ belong to the same bridge of $\Pi$ then $[x_e] = [x_{e'}]$ and, second, if $e$ and $e'$ belong to different bridges of $\Pi$ then $[x_e] \neq [x_{e'}]$. The second simplification that we make neglects the second restriction, hence eliminating interactions between the labels associated with different bridges.
\begin{enumerate}
\item[{\bf (S2)}]
After taking the expectation, we replace the indicator function $\ind{P(\f x) = \Pi}$ with the larger indicator function $\prod_{\{e,e'\} \in \Pi} \ind{[x_e] = [x_{e'}]}$.
\end{enumerate}
Thus we have
\begin{equation} \label{sum over pairings after simplification 3}
\ind{P(\f x) = \Pi}  \, \E \pBB{\prod_{e \in E(\cal C)} H_{x_e}}
\;=\;
\pBB{\prod_{\{e, e'\} \in \Pi} \ind{[x_e] = [x_{e'}]} \ind{x_e \neq x_{e'}} S_{x_e}} + \cal E\,.
\end{equation}
A similar analysis may be used for the term resulting from the second term of \eqref{definition of A} to get
\begin{multline} \label{sum over pairings after simplification 4}
\ind{P(\f x) = \Pi}  \, \E \pBB{\prod_{e \in E(\cal C_1)} H_{x_e}} \E \pBB{\prod_{e \in E(\cal C_2)} H_{x_e}}
\\
=\;\pBB{\prod_{\pi \in \Pi} \indb{\abs{\pi \cap E(\cal C_1)} \neq 1}} \pBB{\prod_{\{e, e'\} \in \Pi} \ind{[x_e] = [x_{e'}]} \ind{x_e \neq x_{e'}} S_{x_e}} + \cal E\,,
\end{multline}
where we used that if any bridge $\pi \in \Pi$ intersects both $E(\cal C_1)$ and $E(\cal C_2)$ then the left-hand side vanishes since $\E H_{xy} = 0$. 

Next, we note that
\begin{equation} \label{def_I_sigma}
\ind{[x_e] = [x_{e'}]} \ind{x_e \neq x_{e'}} \;=\; \ind{x_{a(e)} = x_{b(e')}} \ind{x_{a(e')} = x_{b(e)}} \;\eqd\; J_{\{e,e'\}}(\f x)\,.
\end{equation}
Plugging \eqref{sum over pairings after simplification 3} and \eqref{sum over pairings after simplification 4} back into \eqref{sum over pairings after simplification} therefore yields
\begin{equation} \label{expansion in terms of P S}
\avgb{\tr H^{(n_1)}\,; \tr H^{(n_2)}} \;=\; \sum_{\Pi \in \fra M_c(E(\cal C))} \sum_{\f x} I(\f x) \pBB{\prod_{\{e, e'\} \in \Pi} J_{\{e,e'\}}(\f x) \, S_{x_e}} + \cal E\,,
\end{equation}
where we introduced the \emph{subset of connected pairings} of $E(\cal C)$
\begin{equation} \label{conn_pair}
\fra M_c(E(\cal C)) \;\deq\; \hb{\Pi \in \fra M(E(\cal C)) \col \text{there is a $\pi \in \Pi$ such that } \pi \cap E(\cal C_1) \neq \emptyset \text{ and } \pi \cap E(\cal C_2) \neq \emptyset}\,.
\end{equation}
The formula \eqref{expansion in terms of P S} provides the desired expansion in terms of pairings.

A pairing $\Pi$ may be conveniently represented graphically by drawing a line (or \emph{bridge}) joining the edges $e$ and $e'$ whenever $\{e,e'\} \in \Pi$. See Figure \ref{fig: pairing} for an example.

\begin{figure}[ht!]
\begin{center}
\includegraphics{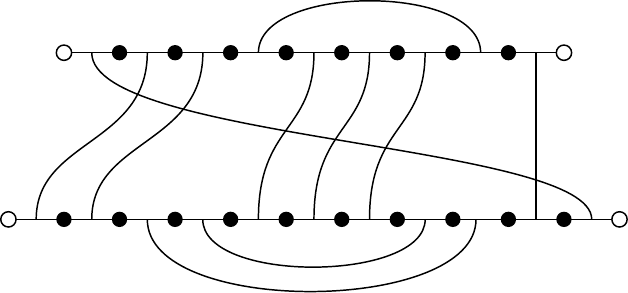}
\end{center}
\caption{A pairing of edges. \label{fig: pairing}} 
\end{figure}

The following notations will prove helpful. We introduce the set of all connected pairings,
\begin{equation*}
\fra M_c \;\deq\; \bigsqcup_{\substack{n_1, n_2 \geq 0 \col \\ n_1 + n_2 \text{ even}}} \fra M_c \pb{E(\cal C(n_1, n_2))}\,.
\end{equation*}
\begin{definition} \label{def:C_Gamma}
With each pairing $\Pi \in \fra M_c$ we associate its underlying graph $\cal C(\Pi)$, and regard $n_1$ and $n_2$ as functions on $\fra M_c$ in self-explanatory notation. We also frequently abbreviate $V(\Pi) \equiv V(\cal C(\Pi))$, and refer to $V(\Pi)$ as the vertices of $\Pi$.
\end{definition}

Next, we observe that the indicator function
\begin{equation} \label{indicator function behind Q}
\ind{x_{a(\cal C_1)} = x_{b(\cal C_1)}} \ind{x_{a(\cal C_2)} = x_{b(\cal C_2)}} \prod_{\pi \in \Pi} J_\pi(\f x)
\end{equation}
in \eqref{expansion in terms of P S} constrains some labels of $\f x$ to coincide.
We introduce a corresponding partition $Q(\Pi) \in \fra P (V(\Pi))$ of the vertices of $\Pi$, whereby $i$ and $j$ are in the same block of $Q(\Pi)$ if and only if $x_i$ and $x_j$ are constrained to be equal by \eqref{indicator function behind Q}. Equivalently, we define $Q(\Pi)$ as the finest partition of $V(\Pi)$ with the following properties.
\begin{enumerate}
\item
$a(e)$ and $b(e')$ belong to the same block of $Q(\Pi)$ whenever $\{e,e'\} \in \Pi$. (Note that, by symmetry, $a(e')$ and $b(e)$ also belong to the same block.)
\item
$a(\cal C_1)$ and $b(\cal C_1)$ belong to the same block of $Q(\Pi)$.
\item
$a(\cal C_2)$ and $b(\cal C_2)$ belong to the same block of $Q(\Pi)$.
\end{enumerate}
Graphically, the first condition means that the two vertices on either side of a bridge are constrained to have the same label. See Figure \ref{fig: Q} for an illustration of $Q(\Pi)$. We emphasize that we constantly have to deal with two different partitions. Taking the expectation originally introduced a  partition on the edges, which, after Simplification {\bf (S1)}, is in fact a pairing. This pairing, in turn, induces constraints on the labels that are assigned to vertices; more precisely, it forces the labels of certain vertices to coincide. Together with the coincidence of the first and last labels on $\cal C_1$ and $\cal C_2$, imposed by taking the trace, this defines a partition on the vertices. Depending on $\f x$ it may happen that more labels coincide than required by $Q(\Pi)$; the partition $Q(\Pi)$ encodes the minimal set of constraints. 
We therefore call $Q(\Pi)$ the \emph{minimal vertex partition induced by $\Pi$}.
Notice that, by construction, $Q(\Pi)$ does not depend on $\f x$.
\begin{figure}[ht!]
\begin{center}
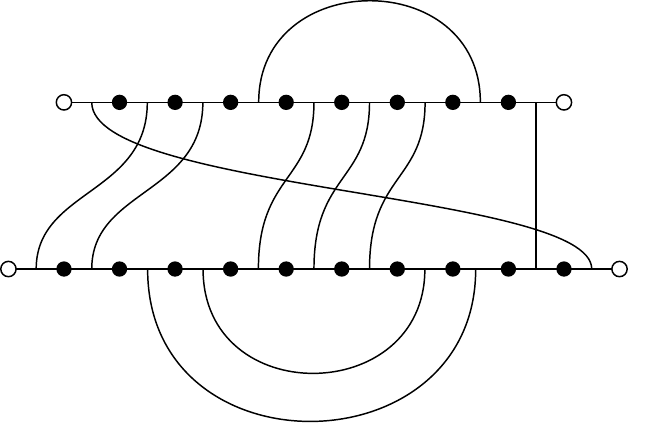
\end{center}
\caption{The pairing $\Pi$ from Figure \ref{fig: pairing}, where we in addition indicate the eight blocks of $Q(\Pi)$ by assigning a letter to each block. \label{fig: Q}} 
\end{figure}

Next, suppose that there is a block $q \in Q(\Pi)$ that contains two vertices $i,j \in q$ such that $\dist(i,j) = 2$. We conclude that the contribution of $\Pi$ to the right-hand side of \eqref{expansion in terms of P S} vanishes, since the indicator function $I(\f x)$ vanishes by the nonbacktracking condition $\ind{x_i \neq x_j}$. Hence we may restrict the summation over $\Pi$ in \eqref{expansion in terms of P S} to the subset of pairings
\begin{equation} \label{def_R}
\fra R \;\deq\; \hb{\Pi \in \fra M_c \col \text{if } \dist(i,j) = 2 \text{ then $i$ and $j$ belong to different blocks of $Q(\Pi)$}}\,.
\end{equation}
\begin{lemma} \label{lem:q_geq_2}
For any $\Pi \in \fra R$, all blocks of $Q(\Pi)$ have size at least two.
\end{lemma}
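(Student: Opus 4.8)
\textit{Proof proposal.} The plan is to argue by contradiction. Suppose $\Pi \in \fra R$ and some block $q$ of $Q(\Pi)$ is a singleton, $q = \{i\}$. I will show that this forces a bridge of $\Pi$ to pair the two edges incident to a single black vertex, and that such a bridge places two vertices at distance $2$ in $\cal C$ into the same block of $Q(\Pi)$, contradicting the definition of $\fra R$ in \eqref{def_R}.

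The first step is to rule out that $i$ is white. Since $\Pi \in \fra M_c$, some bridge of $\Pi$ meets both $E(\cal C_1)$ and $E(\cal C_2)$; in particular $E(\cal C_1)$ and $E(\cal C_2)$ are nonempty, so $n_1, n_2 \geq 1$ and each chain $\cal C_j$ has two distinct endpoints $a(\cal C_j) \neq b(\cal C_j)$. The white vertices of $\cal C$ are exactly these four endpoints, and by properties (ii) and (iii) in the definition of $Q(\Pi)$ the vertices $a(\cal C_j)$ and $b(\cal C_j)$ lie in the same block. Hence no white vertex forms a singleton block, so $i$ must be black.

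The second step treats the black case. Assume $i$ lies on $\cal C_1$ (the case of $\cal C_2$ is identical). Then $i$ is an interior vertex of the chain, with a unique incoming edge $e$ (so $b(e) = i$) and a unique outgoing edge $f$ (so $a(f) = i$); moreover, since $\cal C = \cal C_1 \sqcup \cal C_2$ is a disjoint union, $f$ is the \emph{only} edge of $\cal C$ whose initial vertex is $i$. Let $e'$ be the partner of $e$ in the pairing $\Pi$. Property (i) of $Q(\Pi)$, applied to the bridge $\{e,e'\}$, forces $a(e')$ to lie in the same block as $b(e) = i$; since $\{i\}$ is a singleton this means $a(e') = i$, and by the uniqueness just noted, $e' = f$. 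Therefore $\{e,f\} \in \Pi$. Applying property (i) once more, now to the bridge $\{e,f\}$, gives that $a(e)$ and $b(f)$ lie in the same block of $Q(\Pi)$. But $a(e)$ and $b(f)$ are the two neighbours of $i$ along $\cal C_1$, so $\dist(a(e), b(f)) = 2$ in $\cal C$, contradicting $\Pi \in \fra R$. This completes the proof.

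I do not anticipate a substantial obstacle here, as the argument is purely combinatorial. The only points requiring care are the orientation bookkeeping on the chains --- so that each black vertex really has exactly one incoming and one outgoing edge, and a unique outgoing edge in all of $\cal C$ --- and the correct reading of the two identifications imposed by each bridge in property (i) of the definition of $Q(\Pi)$. The conceptual content is simply that a singleton block of $Q(\Pi)$ can arise only via a bridge joining two consecutive edges of a single chain, which is precisely the configuration excluded by $\fra R$.
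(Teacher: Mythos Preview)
Your proof is correct and follows essentially the same approach as the paper's own proof: show that a singleton block $\{i\}$ forces $i$ to be black, that the two edges incident to $i$ form a bridge, and that this places the two neighbours of $i$ (at distance $2$) in the same block of $Q(\Pi)$, contradicting $\Pi\in\fra R$. The paper's argument is simply a terser version of what you wrote; in particular your explicit treatment of the white-vertex case and the orientation bookkeeping for the black case fill in details the paper leaves implicit.
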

\begin{proof}
If $\{i\} \in Q(\Pi)$ then, by definition of $Q(\Pi)$, the degree of $i$ is two and both edges incident to $i$ belong to the same bridge. This implies that the two vertices adjacent to $i$ belong to the same block of $Q(\Pi)$, which is impossible by definition of $\fra R$.
\end{proof}

At this point we introduce our final simplification.
\begin{enumerate}
\item[{\bf (S3)}]
After restriction the summation over $\Pi$ to the set $\fra R$ in \eqref{expansion in terms of P S}, we neglect the indicator function $I(\f x)$.
\end{enumerate}
Note that the main purpose of $I(\f x)$ was to restrict the summation over pairings $\Pi$ to the set $\fra R$, which is still taken into account if one assumes {\bf (S3)}. The presence of $I(\f x)$ in \eqref{expansion in terms of P S} simply results in some additional error terms $\cal E$ that are ultimately negligible. Note that $I(\f x)$ also restricts the summation to labels satisfying $x_{a(C_i)} = x_{b(C_i)}$; this condition is still imposed in the definition of $\fra R$.

Hence we get
\begin{multline} \label{expansion in terms of P fully simplified}
\avgb{\tr H^{(n_1)}\,; \tr H^{(n_2)}}
\\
=\; \sum_{\Pi \in \fra R} \ind{n_1(\Pi) = n_1} \ind{n_2(\Pi) = n_2} \sum_{\f y \in \bb T^{Q(\Pi)}} \sum_{\f x \in \bb T^{V(\Pi)}} \pBB{\prod_{q \in Q(\Pi)} \prod_{i \in q} \ind{x_i = y_q}} \pBB{\prod_{\{e, e'\} \in \Pi} S_{x_e}} + \cal E\,,
\end{multline}
where we introduced a set of independent summation labels $\f y$, indexed by the blocks of $Q(\Pi)$.

\subsection{Skeletons} \label{sec_42}

The summation in \eqref{truncated series} is highly oscillatory,
which requires a careful resummation of graphs of different order. 
We perform a local resummation procedure of the so-called \emph{ladder} subdiagrams,
which are subdiagrams with a pairing structure that consists only of parallel bridges.
This is the second resummation procedure mentioned in Section \ref{sec:struc}.
Concretely, we regroup pairings $\Pi$ into families that have a similar structure, differing only in the number of parallel bridges per ladder  subdiagram. Their common structure is represented by the simplest element of the family, the \emph{skeleton}, whose ladders consist of a single bridge.

We now introduce these concepts precisely. The skeleton of a pairing $\Pi \in \fra M_c$ is generated from $\Pi$ by collapsing parallel bridges. By definition, the bridges $\{e_1, e_1'\}$ and $\{e_2, e_2'\}$ are \emph{parallel} if $b(e_1) = a(e_2)$ and $b(e_2') = a(e_1')$. With each $\Pi \in \fra M_c$ we associate a couple $\cal S(\Pi) = (\Sigma, \f b)$, where $\Sigma \in \fra M_c$ has no parallel bridges, and $\f b = (b_\sigma)_{\sigma \in \Sigma} \in \N^\Sigma$. The pairing $\Sigma$ is obtained from $\Pi$ by successively collapsing parallel bridges until no parallel bridges remain. The integer $b_\sigma$ denotes the number of parallel bridges of $\Pi$ that were collapsed into the bridge $\sigma$. Conversely, for any given couple $(\Sigma, \f b)$, where $\Sigma \in \fra M_c$ has no parallel bridges and $\f b \in \N^{\Sigma}$, we define $\Pi = \cal G(\Sigma, \f b)$ as the pairing obtained from $\Sigma$ by replacing, for each $\sigma \in \Sigma$, the bridge $\sigma$ with $b_\sigma$ parallel bridges. Thus we have a one-to-one correspondence between pairings $\Pi$ and couples $(\Sigma, \f b)$. The map $\cal S$ corresponds to the collapsing of parallel bridges of $\Pi$, and the map $\cal G$ to the ``expanding'' of bridges of $\Sigma$ according to the multiplicities $\f b$. Instead of burdening the reader with formal definitions of the operations $\cal S$ and $\cal G$, we refer to Figures \ref{fig: pairing} and \ref{fig: skeleton} for an illustration. When no confusion is possible, in order to streamline notation we shall omit $\cal S$ and $\cal G$ and identify $\Pi$ with $(\Sigma, \f b)$.
In particular, the minimal vertex partition $Q(\Pi)$ induced by $\Pi = \cal G(\Sigma, \f b)$ is
denoted by $Q(\Sigma, \f b)$, and is not to be confused with $Q(\Sigma)$, the minimal vertex partition on
the skeleton $\Sigma$.
\begin{figure}[ht!]
\begin{center}
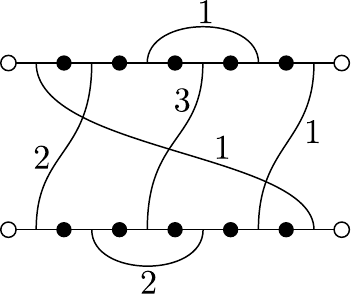
\end{center}
\caption{The skeleton $\Sigma$ of the pairing $\Pi$ from Figure \ref{fig: pairing}. Next to each skeleton bridge $\sigma \in \Sigma$ we indicate the multiplicity $b_\sigma$ describing how many bridges of $\Pi$ were collapsed into $\sigma$. \label{fig: skeleton}} 
\end{figure}

\begin{definition} \label{def:ladders}
Fix $\Sigma \in \fra M_c$ and  $\f b\in  \N^\Sigma$. As above, abbreviate $\Pi \deq \cal G(\Sigma, \f b)$.
\begin{enumerate}
\item
For $\sigma \in \Sigma$ we introduce the \emph{ladder encoded by $\sigma$}, denoted by $L_\sigma(\Sigma, \f b) \subset \Pi$ and defined as the set of bridges $\pi \in \Pi$ that are collapsed into the skeleton bridge $\sigma$ by the operation $\cal S$. Note that $L_\sigma(\Sigma,\f b)$ consists of $\abs{L_\sigma(\Sigma,\f b)} = b_\sigma$ parallel bridges.

\item
We say that a vertex $i \in V(\Pi)$ \emph{touches} the bridge $\{e,e'\} \in \Pi$ if $i$ is incident to $e$ or $e'$. We call a vertex $i$ a \emph{ladder vertex} of $L_\sigma(\Sigma, \f b)$ if it touches two bridges of $L_\sigma(\Sigma, \f b)$.
Note that a ladder consisting of $b$ parallel bridges gives rise to $2(b-1)$ ladder vertices.

\item
We say that $i \in V(\Pi)$ is a \emph{ladder vertex} of $\Pi$ if it is a ladder vertex of $L_\sigma(\Sigma, \f b)$ for some $\sigma \in \Sigma$. We decompose the vertices $V(\Pi) = V_s(\Pi) \sqcup V_l(\Pi)$, where $V_l(\Pi)$ denotes the set of ladder vertices of $\Pi$.
\end{enumerate}
\end{definition}
See Figure \ref{fig: ladder vertices} for an illustration.
\begin{figure}[ht!]
\begin{center}
\includegraphics{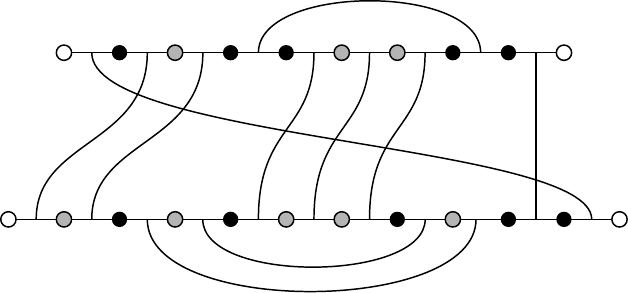}
\end{center}
\caption{The ladder vertices $V_l(\Pi)$, drawn in grey, of the pairing $\Pi$ from Figure \ref{fig: pairing}. The vertices $V_s(\Pi)$ are drawn in black or white. In this example, there are $\abs{\Sigma} = 6$ ladders. \label{fig: ladder vertices}}
\end{figure}
Due to the nonbacktracking condition and the requirement that parallel bridges are collapsed, not every pairing can be a skeleton, and not every family of multiplicities is admissible; however, the few exceptions are easy to describe. The following lemma characterizes the explicit set $\fra S$ of allowed skeletons $\Sigma$ and the set of allowed multiplicities, $B(\Sigma)$, which may arise from some graph $\Pi \in \fra R \subset \fra M_c$.

\begin{lemma} \label{lem: skeletons}
For any $\Pi \in \fra R$ with $(\Sigma, \f b) \deq \cal S(\Pi)$ we have $\Sigma \in \fra S$, where
\begin{equation*}
\fra S \;\deq\; \hb{\Sigma \in \fra M_c \col \Sigma \text{ has no parallel bridges and no block of $Q(\Sigma)$ has size one}}\,.
\end{equation*}
Moreover, defining
\begin{equation}\label{def:B}
B(\Sigma) \;\deq\; \hb{\f b \in \N^\Sigma \col \cal G(\Sigma, \f b) \in \fra R}\,,
\end{equation}
for any $\Sigma \in \fra S$,
we have that $\N^{\Sigma} \setminus B(\Sigma)$ is finite.
\end{lemma}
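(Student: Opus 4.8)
The plan is to base everything on a single structural fact describing how the minimal vertex partition behaves under the expansion $\cal G$. Fix $\Sigma \in \fra M_c$ with no parallel bridges and $\f b \in \N^\Sigma$, set $\Pi \deq \cal G(\Sigma, \f b)$, and identify $V(\Sigma)$ with the set of skeleton vertices $V_s(\Pi)$ via the obvious inclusion (expanding a bridge only inserts new vertices, which become ladder vertices). I claim that $Q(\Pi)$ is obtained from $Q(\Sigma)$ by keeping the blocks of $Q(\Sigma)$ unchanged, now viewed inside $V_s(\Pi)$, and adjoining, for each $\sigma \in \Sigma$, the $b_\sigma - 1$ ladder-vertex pairs of $L_\sigma(\Sigma, \f b)$, each a block of size exactly two. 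To prove this one simply tracks the three defining conditions of the minimal vertex partition through the expansion of a single skeleton bridge $\sigma = \{e, e'\}$: condition (i) applied to the $b_\sigma$ bridges of $L_\sigma$ produces exactly the two ``endpoint'' identifications $a(e) \sim b(e')$ and $a(e') \sim b(e)$ -- which are precisely condition (i) for $\sigma$ in $Q(\Sigma)$ -- together with the $b_\sigma - 1$ identifications of consecutive ladder vertices; conditions (ii)--(iii) are untouched since the chain endpoints are skeleton vertices. Since the ladder-vertex identifications involve only ladder vertices and do not chain together, no skeleton vertex is ever merged with a ladder vertex, and no two of the new size-two blocks merge. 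This bookkeeping is routine but is the backbone of the whole argument.

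Given this structural fact, the first assertion is short. That $\Sigma = \cal S(\Pi)$ has no parallel bridges holds by the very construction of $\cal S$, which collapses parallel bridges until none remain; and $\Sigma \in \fra M_c$ because a ladder all of whose bridges cross between $\cal C_1$ and $\cal C_2$ collapses to a crossing bridge, so connectivity of the pairing is preserved. For the size condition, since $\Pi \in \fra R$, Lemma \ref{lem:q_geq_2} gives that every block of $Q(\Pi)$ has at least two elements; by the structural fact each block of $Q(\Sigma)$ coincides with a block of $Q(\Pi)$ consisting of skeleton vertices, and hence also has at least two elements. Therefore $\Sigma \in \fra S$.

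For the second assertion, fix $\Sigma \in \fra S$, set $\Pi \deq \cal G(\Sigma, \f b)$, and suppose $\Pi \notin \fra R$, so that there are $i \neq j$ with $\dist_\Pi(i,j) = 2$ lying in a common block of $Q(\Pi)$. By the structural fact this block is either a ladder-vertex pair or a block of $Q(\Sigma)$ sitting inside $V_s(\Pi)$. In the first case the pair lies in some $L_\sigma$: if $L_\sigma$ is a crossing ladder its two ladder vertices lie on different chains, so $\dist_\Pi(i,j) = \infty$; if $L_\sigma$ lies within a single chain, I would argue that its two strands cannot be adjacent, since an adjacency would produce a bridge of $\Pi$ joining two consecutive chain edges, which upon collapsing to $\Sigma$ forces the shared vertex of the corresponding bridge of $\Sigma$ to be a singleton block of $Q(\Sigma)$, contradicting $\Sigma \in \fra S$ -- and once the strands are separated by at least one chain edge a direct position count shows that no internal ladder pair is at distance $2$. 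In the second case, $i$ and $j$ are skeleton vertices lying on a common chain (since $\dist_\Pi(i,j) = 2$ forces this), lying in a common block of $Q(\Sigma)$; using that the $\Pi$-distance between two skeleton vertices on a chain is the sum of the multiplicities $b_\sigma$ over the skeleton edges separating them, and that $\Sigma$ has only finitely many same-block skeleton-vertex pairs on its chains, one deduces that such a coincidence can occur only for $\f b$ in a finite set, and intersecting over all candidate configurations yields that $\N^\Sigma \setminus B(\Sigma)$ is finite. I expect the main obstacle to be precisely this last case analysis: one has to show that the two conditions defining $\fra S$ -- absence of parallel bridges and of singleton blocks of $Q(\Sigma)$ -- are exactly strong enough to exclude every configuration in $\Sigma$ that would force a distance-$2$ coincidence in $\cal G(\Sigma, \f b)$ for infinitely many $\f b$, which requires a careful analysis of how segments of the chains of $\Sigma$, the ladders grown on them, and the partition $Q(\Sigma)$ interact.
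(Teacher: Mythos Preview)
Your argument for the first assertion is correct. The paper takes a more direct route that bypasses the structural lemma entirely: if $Q(\Sigma)$ has a singleton block $\{i\}$, then the two edges of $\Sigma$ incident to $i$ must form a single bridge of $\Sigma$; the innermost rung of the corresponding ladder in $\Pi$ is then a bridge of $\Pi$ joining two adjacent edges, which forces the two neighbours of $i$ into a common block of $Q(\Pi)$ and contradicts $\Pi \in \fra R$.

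For the second assertion you work far harder than the paper, whose proof is a single observation: if $b_\sigma \geq 2$ for every $\sigma$ then $\cal G(\Sigma, \f b) \in \fra R$, since ladder-vertex pairs always form size-two blocks of $Q(\Pi)$. Your Case~2 conclusion, however, has a real gap: the condition $\dist_\Pi(i,j) = 2$ for two same-block skeleton vertices only pins down $\sum b_\sigma = 2$ over the skeleton edges between $i$ and $j$, leaving the remaining coordinates of $\f b$ unconstrained, so the bad set is \emph{not} finite in general. Indeed the paper's own example just above the lemma gives $\N^{D_4} \setminus B(D_4) = \{\f b : b_3 = 2\}$, an infinite set, and the paper's one-line argument shares this defect (with $\f b = (2,2,2)$ in $D_4$, two same-block skeleton vertices land at $\Pi$-distance $2$, so the claimed implication fails). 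The finiteness assertion should evidently be read as ``determined by finitely many constraints on finitely many coordinates'', which is all that is actually used later for $\cal V_1'(\Sigma)$; neither your argument nor the paper's establishes literal finiteness, because it is false.
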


Roughly, this lemma states two things. First, if a skeleton bridge $\sigma \in \Sigma$ touches two adjacent vertices of  $\Sigma$ that belong to the same block of $Q(\Sigma)$, then we have $b_\sigma \neq 2$. Second, if $Q(\Sigma)$ yields the label structure $aba$ for three consecutive vertices of $\Sigma$, then $b_\sigma + b_{\sigma'} \geq 3$ where $\sigma$ and $\sigma'$ are the two bridges touching the innermost of these three vertices (in such a situation $\sigma=\sigma'$ is impossible by nonbacktracking condition implemented by $\fra R$).
See Figure \ref{fig: D8} below for an illustration of this latter restriction. Both of these restrictions are consequences of the nonbacktracking condition implemented in the definition of $\fra R$.

For example, the skeleton $D_4$, defined in Figure \ref{fig: dumbbell} below, may arise
as a skeleton of some $\Pi$, so that $D_4\in \fra S$.
Using $b_1$, $b_2$, and $b_3$ to denote the multiplicities of the top, bottom, and middle bridges respectively, we have $B(D_4) = \hb{\f b=(b_1,b_2,b_3) \col b_1, b_2, b_3 \geq 1 \,,\, b_3 \neq 2}$.
Indeed, it is easy to check that the condition on the right-hand side of \eqref{def_R} is satisfied if and only if $b_2 \neq 2$.

\begin{proof}[Proof of Lemma \ref{lem: skeletons}]
Let $\Pi \in \fra R$ and $(\Sigma, \f b) \deq \cal S(\Pi)$. Clearly, $\Sigma$ has no parallel bridges. Moreover, if $Q(\Sigma)$ has a block of size one then $\Sigma$ must have a bridge that connects 
two adjacent edges. Hence $\Pi$ also has a bridge that connects two adjacent edges. By definition of $\fra R$, this is impossible. This proves the first claim.

In order to prove the second claim, we simply observe that if $\Sigma \in \fra S$ and $b_\sigma \geq 2$ for all $\sigma \in \Sigma$, then $\cal G(\Sigma, \f b) \in \fra R$. This  follows easily from the definition of $\fra R$ and the fact that the two vertices located between two parallel bridges of $\Pi$ always form a block of size two in $Q(\Pi)$.
\end{proof}

Lemma \ref{lem: skeletons} proves that there is a one-to-one correspondence, given by the maps $\cal S$ and $\cal G$, between pairings $\Pi \in \fra R$ and couples $(\Sigma, \f b)$ with $\Sigma \in \fra S$ and $\f b \in B(\Sigma)$.  Throughout the following, we often make use of this correspondence and tacitly identify $\Pi$ with $(\Sigma, \f b)$.  We now use skeletons to rewrite $\wt F^\eta (E_1, E_2)$: from \eqref{expansion in terms of P fully simplified} we get
\begin{align}
\wt F^\eta (E_1, E_2) &\;=\; \sum_{\Pi \in \fra R} \ind{2 \abs{\Pi} \leq M^\mu}
2 \re \pb{\wt \gamma_{n_1(\Pi)}(E_1,\phi_1)} \, 2 \re \pb{\wt \gamma_{n_2(\Pi)}(E_2, \phi_2)}
\notag \\
&\qquad \times
\sum_{\f y \in \bb T^{Q(\Pi)}} \sum_{\f x \in \bb T^{V(\Pi)}} \pBB{\prod_{q \in Q(\Pi)} \prod_{i \in q} \ind{x_i = y_q}} \pBB{\prod_{\{e, e'\} \in \Pi} S_{x_e}} + \cal E\,,
\notag \\
&\;=\; \sum_{\Sigma \in \fra S}  \sum_{\f b \in B(\Sigma)} \indBB{2 \sum_{\sigma \in \Sigma} b_\sigma \leq M^\mu } 2 \re \pb{\wt \gamma_{n_1(\Sigma, \f b)}(E_1,\phi_1)} \, 2 \re \pb{\wt \gamma_{n_2(\Sigma, \f b)}(E_2, \phi_2)}
\notag \\ \label{expansion in terms of P S 3}
& \qquad \times \sum_{\f y \in \bb T^{Q(\Sigma, \f b)}} \sum_{\f x \in \bb T^{V(\Sigma, \f b)}} \pBB{\prod_{q \in Q(\Sigma, \f b)} \prod_{i \in q} \ind{x_i = y_q}} \pBB{\prod_{\{e, e'\} \in \cal G(\Sigma, \f b)} S_{x_e}} + \cal E\,.
\end{align}

Next, we observe that we have a splitting
\begin{equation*}
Q(\Pi) \;=\; Q(\Pi) \vert_{V_s(\Pi)} \sqcup Q(\Pi) \vert_{V_l(\Pi)}\,,
\end{equation*}
so that the indicator function in \eqref{expansion in terms of P S 3} factors into an indicator function involving only labels $y_q$ and $x_i$ with $q \in Q(\Pi) \vert_{V_s(\Pi)}$ and $i \in V_s(\Pi)$, and another indicator function involving only labels $y_q$ and $x_i$ with $q \in Q(\Pi) \vert_{V_l(\Pi)}$ and $i \in V_l(\Pi)$. Summing over the latter (``ladder'') labels yields
\begin{equation} \label{F in terms of skeletons}
\wt F^\eta (E_1, E_2) \;=\; \sum_{\Sigma \in \fra S} \cal V(\Sigma) + \cal E\,,
\end{equation}
where we defined the \emph{value} of the skeleton $\Sigma \in \fra S$ as
\begin{multline} \label{def of V Sigma}
\cal V(\Sigma) \;\deq\; \sum_{\f b \in B(\Sigma)} \indBB{2 \sum_{\sigma \in \Sigma} b_\sigma \leq M^\mu } 2 \re \pb{\wt \gamma_{n_1(\Sigma, \f b)}(E_1,\phi_1)} \, 2 \re \pb{\wt \gamma_{n_2(\Sigma, \f b)}(E_2, \phi_2)}
\\
\times \sum_{\f y \in \bb T^{Q(\Sigma)}} \sum_{\f x \in \bb T^{V(\Sigma)}} \pBB{\prod_{q \in Q(\Sigma)} \prod_{i \in q} \ind{x_i = y_q}} \pBB{\prod_{\{e, e'\} \in \Sigma} (S^{b_{\{e,e'\}}})_{x_e}}\,.
\end{multline}
 Here we recall Definition \ref{def:C_Gamma} for the meaning of the vertex set $V(\Sigma)$.
The entry $(S^{b_{\{e,e'\}}})_{x_e}$ arises from summing out the $b_{\{e,e'\}} - 1$ independent labels associated with the ladder vertices of $L_{\{e,e'\}}(\Sigma, \f b)$, according to
\begin{equation}\label{intsum}
\sum_{x_1, \dots, x_{b - 1}} S_{x_0 x_1} S_{x_1 x_2} \cdots S_{x_{b - 1} x_b} \;=\; (S^b)_{x_0 x_b}\,.
\end{equation}
The labels $\f x \in \bb T^{V(\Sigma)}$ in \eqref{def of V Sigma} are not free; we use them for notational convenience. They are a function $\f x = \f x(\f y)$ of the independent labels $\f y \in \bb T^{Q(\Sigma)}$. The function $\f x(\f y)$ is defined by the indicator function in the second parentheses on the second line of \eqref{def of V Sigma},  i.e.\ $x_i(\f y) \deq y_q$ where $q \ni i$. 

In summary, we have proved that $\wt F^\eta (E_1, E_2)$ can be written as a sum of
contributions of skeleton graphs (up to errors $\cal E$ that will prove to be negligible). The value of each skeleton is computed by assigning a positive power $b_e$ of $S$ to each bridge of $\Sigma$, and summing up all powers $b_e$ and all labels that are
compatible with $\Sigma$ (in the sense that the vertices touching a bridge, on the same side of the bridge, must have identical labels).

\subsection{The leading term} \label{sec:dumbbell_C1}
We now compute the leading contribution to \eqref{F in terms of skeletons}. As it turns out, it arises from a family of eight skeleton pairings, which we call \emph{dumbbell} skeletons. They are defined in Figure \ref{fig: dumbbell}. We denote by $D_i$ the $i$-th dumbbell skeleton, where $i = 1, \dots, 8$. 
 At this point in the argument, it is not apparent why precisely these
eight skeletons yield the leading contribution. In fact, our analysis will reveal the graph-theoretic properties that single them out as the leading skeletons; see Section \ref{sec:small_skeletons} below for the details.

\begin{figure}[ht!]
\begin{center}
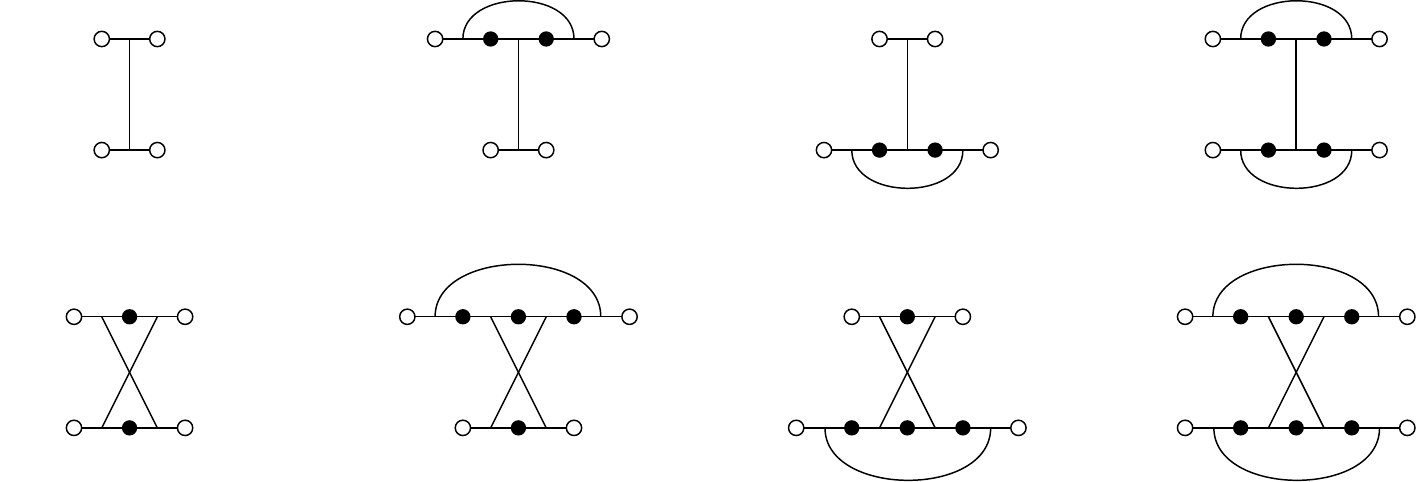
\end{center}
\caption{The eight dumbbell skeletons $D_1, \dots, D_8$. \label{fig: dumbbell}} 
\end{figure}

We now \emph{define} $\cal V_{\rm{main}}$ as the contribution of the dumbbell skeletons:
\begin{equation} \label{def_V_main}
\cal V_{\rm{main}} \;\deq\; \sum_{i = 1}^8 \cal V(D_i)\,.
\end{equation}

\begin{proposition}[Dumbbell skeletons] \label{prop: leading term}
Under the assumptions of Proposition \ref{prop: main}, the contribution of the dumbbell skeletons defined in \eqref{def_V_main} satisfies (i), and (ii), and (iii) of Proposition \ref{prop: main}.
\end{proposition}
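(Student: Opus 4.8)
The plan is to compute the eight skeleton values $\cal V(D_i)$ from \eqref{def of V Sigma} explicitly, resum them into the closed expression \eqref{common expression for V(D)}, and then extract its asymptotics in the three regimes by a spectral analysis of $S$. First I would carry out the ladder resummation: for each dumbbell $D_i$ the skeleton has only finitely many vertices, so $Q(D_i)$ is explicit and the sum $\sum_{\f y \in \bb T^{Q(D_i)}}$ in \eqref{def of V Sigma} becomes a trace of a product of matrix powers $S^{b_\sigma}$. The multiplicities range over $B(D_i)$, which by Lemma \ref{lem: skeletons} differs from $\N^{\Sigma}$ only by a finite set; the excluded multiplicities contribute a bounded number of extra traces, of size $O(N/M)$, and are absorbed into the error. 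Similarly the cutoff $\ind{2 \sum_\sigma b_\sigma \leq M^\mu}$ can be dropped up to an error $O(N M^{-q})$ for any $q$, by the decay \eqref{bound on gamma} of $\wt \gamma_n$. The remaining sums over $\f b$ are geometric; recalling $\gamma_n$ from \eqref{claim about gamma n} and the convolution identity \eqref{claim about gamma}, this produces \eqref{common expression for V(D)}, i.e.\ \eqref{skeletons_approx}, and summing the residual geometric series in $b_1, \dots, b_4$ yields the right-hand side of \eqref{F_eta_approx} for $\cal V_{\rm{main}}$ (with $A_i = \arcsin E_i$). In the $\beta = 2$ case the eight dumbbells group into the two families producing the two terms of \eqref{2re2re}; the "cooperon" term, built from $\ee^{\ii(A_1 + A_2)}$, has a non-stationary phase and contributes only $O(1)$ -- hence is subleading -- except in the $d = 2$, case \textbf{(C1)}, where it produces the extra $\pi \eta \nu(E)/(\omega^2 + 4 \eta^2)$ term of \eqref{V2C1}.

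Next I would analyse the spectrum of $S$ as in Section \ref{sec: heuristic leading term}: $S$ is diagonalized by Fourier transform with symbol $\wh S_W(q)$, which has a simple eigenvalue $1$, a spectral gap, and the small-$q$ expansion $\wh S_W(q) = 1 - q \cdot D q + O(\abs q^4)$ with $D = D_0 + O(W^{-1})$; for $d = 2$ the quartic term is needed and is controlled by $Q = Q_0 + O(W^{-1})$ from \eqref{def_Q}. Writing the trace in \eqref{F_eta_approx} as $(L/W)^d$ times an integral over the rescaled Brillouin zone, the near-singularity of the resolvent at $\ee^{\ii(A_1 - A_2)} \approx 1$ localizes the integral to $q \approx 0$ whenever $d \leq 4$, and the change of variables $q \mapsto D^{-1/2} q$ produces the factors $1/\sqrt{\det D}$.

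Then I would treat the two regimes separately. If $\omega \gg \eta$ (cases (i), (ii)): $\psi_i^\eta$ acts as $2 \pi$ times a delta on the scale of variation of the trace, so the convolutions in \eqref{F_eta_approx} drop; with $\alpha \deq \ee^{\ii(A_1 - A_2)} \approx 1 - \ii \frac{2\omega}{\pi \nu(E)}$ the trace reduces to $\pbb{\frac{2\omega}{\pi\nu(E)}}^{d/2 - 2} (\det D)^{-1/2} \int_{\R^d} \frac{\dd q}{(\ii + q^2)^2}$, and \eqref{def_K13} together with the Step-1 prefactor gives \eqref{V3C2}; for $d = 4$ the $q$-integral diverges logarithmically, giving \eqref{V4C2}. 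For $d = 2$ the leading integral vanishes since $K_2 = 0$, so I would expand $\wh S_W(q)$ to quartic order, the correction producing the $(Q - 1)\abs{\log \omega}$ term of \eqref{V2C1}--\eqref{V2C2}, with the additional near-diagonal contribution in case \textbf{(C1)} as above. If $\omega = 0$ (case (iii)): I would keep the convolutions, set $e \deq \psi_1 * \phi_2$, and use the elementary Fourier identity $\int \dd v \, e^\eta(v)\, (\ii v + q \cdot D q)^{-2} = \int_0^\infty \dd t \, t\, \ee^{-t q \cdot D q}\, \ol{\wh e(\eta t)}$; after the $q$-integration this becomes $\frac{C}{\sqrt{\det D}} \int_0^\infty \dd t \, t^{1 - d/2} \, \ol{\wh e(\eta t)}$, which rescales to $\eta^{d/2 - 2} V_d(\phi_1, \phi_2)$ by the definition \eqref{def_V_d}, the $d = 4$ case again giving $\abs{\log \eta}$. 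Tracking the explicit numerical prefactors (powers of $\pi$, $\nu(E)$, $\beta$) then gives exactly \eqref{V3C2}--\eqref{V4C2D0}.

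The main obstacle is the first step: rigorously reducing the eight dumbbells to the clean resummed formula, which requires controlling the finitely many forbidden multiplicities in each $B(D_i)$, the cross-ladder constraints dropped in Simplification \textbf{(S2)}, and -- most delicately -- proving the phase cancellation that demotes the cooperon half of the dumbbells to subleading order in the regime $\omega, \eta \ll \kappa$, except precisely in the $d = 2$, \textbf{(C1)} case. A second subtlety is the $d = 2$ cancellation $K_2 = 0$: obtaining the correct coefficient of $\abs{\log \omega}$ relies on the quartic expansion of $\wh S_W$ and careful bookkeeping of the $D^{-1/2}$-weighted fourth moment $Q$, and it is here that the discrepancy with the prediction of \cite{KrLe} appears.
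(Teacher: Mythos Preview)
Your approach is essentially the same as the paper's (which outlines the computation in Section~\ref{sec: heuristic leading term} and the paragraphs following Proposition~\ref{prop: leading term}, and defers the full details to \cite{EK4}). The reduction of the eight dumbbells to the resummed formula \eqref{common expression for V(D)}, the Fourier diagonalization of $S$, and the two-regime asymptotic analysis all match.

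There is, however, a genuine error in your treatment of the $d=2$, \textbf{(C1)} case. You attribute the extra term $\pi \eta \nu(E)/(\omega^2 + 4\eta^2)$ in \eqref{V2C1} to the second (``cooperon'') half of \eqref{2re2re}, i.e.\ the $x_1 x_2$ term built from the phase $\ee^{\ii(A_1+A_2)}$. This cannot be right: for $E_1, E_2$ in the bulk one has $\abs{1 - \ee^{\ii(A_1+A_2)}} \geq c(\kappa)$ uniformly, so the corresponding resolvent is non-singular and that half of \eqref{2re2re} contributes only $O(1)$ to $\Theta$ in \emph{every} case, including $d=2$, \textbf{(C1)}. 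The term $\eta/(\omega^2+4\eta^2)$, by contrast, can be of size $M^{\rho - 2\tau}$ and hence arbitrarily large. It comes instead from the \emph{main} term $x_1 \bar x_2$: in the Cauchy case the convolution is exact, $(\psi^\eta * \gamma_n)(E) = \gamma_n(E + \ii\eta)$ by \eqref{Cauchy case identity}, so the near-singular phase becomes $\ee^{\ii(A_1 - A_2)}$ with $A_i = \arcsin(E_i + \ii\eta)$, and $\abs{1 - \alpha}$ is of order $\sqrt{\omega^2 + 4\eta^2}$ rather than $\omega$. The step ``$\psi_i^\eta$ acts as $2\pi$ times a delta'' therefore fails in \textbf{(C1)} even when $\omega \gg \eta$; the heavy Cauchy tail survives the convolution and produces the extra term. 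This is exactly the mechanism the paper flags after Theorem~\ref{thm: Theta 1}.

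A minor point: the ``cross-ladder constraints dropped in Simplification \textbf{(S2)}'' are not part of Proposition~\ref{prop: leading term} at all --- they sit inside the error $\cal E$ of \eqref{F in terms of skeletons} and are handled by Proposition~\ref{prop:calE}, not here.
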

\begin{proof}
See \cite[Propositions 3.4 and 3.7]{EK4}. 
\end{proof}

While the proof of Proposition \ref{prop: leading term} is given in the companion paper \cite{EK4}, here we explain how to obtain the (approximate) expression \eqref{skeletons_approx} from the definition \eqref{def_V_main}. The main work, performed in \cite[Sections 3.3 and 3.4]{EK4}, is the asymptotic analysis of 
the right-hand side of \eqref{skeletons_approx}, which was outlined in Section \ref{sec: heuristic leading term}.

We first focus on the most important skeleton, $D_8$. See Figure \ref{fig: D8} for our choice of labelling the vertex labels and the multiplicities of the bridges of $D_8$.
\begin{figure}[ht!]
\begin{center}
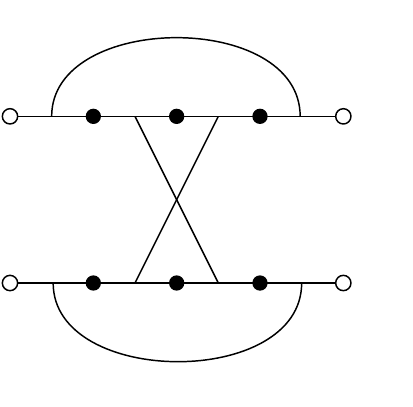
\end{center}
\caption{The skeleton $D_8$. We indicate the independent labels $y_1, \dots, y_4$ next to their associated vertices, and the multiplicities $b_1, \dots, b_4$ next to their associated bridges of $D_8$. \label{fig: D8}} 
\end{figure}
In particular, $Q(D_8)$ consists of four blocks, which are assigned the independent summation vertices $x_1, \dots, x_4$. From \eqref{def of V Sigma} we get
\begin{multline*}
\cal V(D_8) \;=\; \sum_{b_1, b_2, b_3, b_4 \geq 1} \ind{b_3 + b_4 \geq 3} \indb{b_1 + b_2 + b_3 + b_4 \leq M^\mu / 2}
\\
\times 2 \re \pb{\wt \gamma_{2 b_1 + b_3 + b_4}(E_1,\phi_1)} \, 2 \re \pb{\wt \gamma_{2 b_2 + b_3 + b_4}(E_2, \phi_2)}
\sum_{y_1, y_2, y_3, y_4 \in \bb T} (S^{b_1})_{y_1 y_3} (S^{b_2})_{y_2 y_4} (S^{b_3})_{y_3 y_4} (S^{b_4})_{y_3 y_4}\,.
\end{multline*}
Here we used that $B(D_8) = \{(b_1,b_2,b_3,b_4) \col b_1, b_2, b_3, b_4 \geq 1 \,,\, b_3 + b_4 \geq 3\}$, as may be easily checked from the definition of $\fra R$. Similarly, we may compute $\cal V(D_i)$ for $i = 1, \dots, 7$; it is not hard to see that all of them arise from the expression for $\cal V(D_8)$ by setting $b_1$, $b_2$, or $b_4$ to be zero; setting a multiplicity $b_i$ to be zero amounts to removing the corresponding bridge from the skeleton. Since the skeleton has to be connected, $b_3$ and $b_4$ cannot both be zero and
we choose to assign $b_3$ to the bridge with nonzero multiplicity.
The eight combinations generated by $b_1 = 0$ or $b_1 \neq 0$, $b_2 = 0$ or $b_2 \neq 0$, $b_4 = 0$ or $b_4 \neq 0$ correspond precisely to the eight graphs $D_1, \dots, D_8$  (the case $b_3 \geq 1$, $b_4=0$ corresponds to the first four graphs, $D_1, \dots, D_4$,
while $b_3, b_4 \geq 1$ corresponds to $D_5, \ldots, D_8$).
Moreover, recalling \eqref{general S},  we can perform the sum over $y_1, \dots, y_4$:
\begin{equation*}
\sum_{y_1, y_2, y_3, y_4 \in \bb T} (S^{b_1})_{y_1 y_3} (S^{b_2})_{y_2 y_4} (S^{b_3})_{y_3 y_4} (S^{b_4})_{y_3 y_4} \;=\; \cal I^{b_1 + b_2} \tr S^{b_3 + b_4}\,,
\end{equation*}
where we defined
\begin{equation} \label{def_I}
\cal I \;\equiv\; \cal I_M \;\deq\; \frac{M}{M - 1}\,.
\end{equation}
The choice of the symbol $\cal I$ suggests that for most purposes $\cal I$ should be thought of as $1$.
Putting everything together, we find
\begin{multline}\label{VDdef}
\cal V_{\rm{main}} \;=\; \sum_{b_1, b_2 = 0}^\infty \sum_{(b_3, b_4) \in \cal A} \indb{b_1 + b_2 + b_3 + b_4 \leq M^\mu / 2}
\\
\times 2 \re \pb{\wt \gamma_{2 b_1 + b_3 + b_4}(E_1,\phi_1)} \, 2 \re \pb{\wt \gamma_{2 b_2 + b_3 + b_4}(E_2, \phi_2)} \, \cal I^{b_1 + b_2} \tr S^{b_3 + b_4}\,, 
\end{multline}
where
\begin{equation} \label{def_cal_A}
\cal A \;\deq\; \pb{\h{1,2,\dots} \times \h{0,1,\dots}} \setminus \hb{(2,0), (1,1)}\,.
\end{equation}
Note that here we exclude the two cases where $b_3 + b_4 = 2$, since in those cases it may be easily checked that $Q(\cal G(\Sigma, \f b))$ violates the defining condition of $\fra R$. In all other cases, this condition is satisfied.

Next, we use \eqref{bound on gamma} to decouple the upper bound in the summations over $b_1$, $b_2$, $b_3$, and $b_4$. Using \eqref{bound on gamma} we easily find
\begin{multline*}
\cal V_{\rm{main}} \;=\; \sum_{b_1, b_2 = 0}^{\infty} \sum_{(b_3, b_4) \in \cal A} \, 2 \re \pb{\wt \gamma_{2 b_1 + b_3 + b_4}(E_1,\phi_1)} \, 2 \re \pb{\wt \gamma_{2 b_2 + b_3 + b_4}(E_2, \phi_2)} \, \cal I^{b_1 + b_2} \tr S^{b_3 + b_4}
\\
+ O_q(N M^{-q})\,.
\end{multline*}
Similarly, using \eqref{gamma - g} to replace $\wt \gamma$ by $\gamma$, we get
\begin{multline} \label{common expression for V(D)}
\cal V_{\rm{main}} \;=\; \sum_{b_1, b_2 = 0}^{\infty} \sum_{(b_3, b_4) \in \cal A} \, 2 \re \pb{\gamma_{2 b_1 + b_3 + b_4} * \psi_1^\eta}(E_1) \, 2 \re \pb{\gamma_{2 b_2 + b_3 + b_4} * \psi^\eta_2}(E_2) \, \cal I^{b_1 + b_2} \tr S^{b_3 + b_4}
\\
+ O_q(N M^{-q})\,.
\end{multline}
This is the precise version of \eqref{skeletons_approx}. 
For the asymptotic analysis of the right-hand side of \eqref{common expression for V(D)}, 
see  \cite[Sections 3.3 and 3.4]{EK4}.

\subsection{The error terms: large skeletons} \label{sec:large_Sigma}
We now focus on the essence of the proof of Proposition \ref{prop: main}: the estimate of the non-dumbbell 
skeletons. We have to estimate the contribution to the right-hand side of \eqref{F in terms of skeletons} of all skeletons $\Sigma$ in the set
\begin{equation}\label{frastar}
\fra S^* \;\deq\; \fra S \setminus \{D_1, \dots, D_8\}\,.
\end{equation}
It turns out that when estimating $\cal V(\Sigma)$ we are faced with two independent difficulties. First, strong oscillations  in the
$\f b$-summations in the definition of  
$\cal V(\Sigma)$  \eqref{def of V Sigma}
give rise to cancellations which have to be exploited carefully. Second, due to the combinatorial complexity of the skeletons, the size of $\fra S^*$ grows exponentially with $M$, which means that we have to deal with combinatorial estimates. It turns out that these two difficulties may be effectively decoupled: if $\abs{\Sigma}$ is small then only the first difficulty matters, and if $\abs{\Sigma}$ is large then only the second one matters. The sets of small and large skeletons are defined as
\begin{equation}\label{smalllarge}
\fra S^\leq \;\equiv\; \fra S^\leq_K \;\deq\; \hb{\Sigma \in \fra S^* \col \abs{\Sigma} \leq K}\,, \qquad
\fra S^>  \;\equiv\; \fra S^>_K \;\deq\; \hb{\Sigma \in \fra S^* \col \abs{\Sigma} > K}\,,
\end{equation}
where $K \in \N$ is a cutoff, independent of $N$, to be fixed later.

In this subsection, we deal with large $\abs{\Sigma}$, i.e.\ we estimate $\sum_{\Sigma \in \fra S^>} \cal V(\Sigma)$.  The only input on $\wt \gamma_n(E_i, \phi_i)$ that the argument of this subsection requires is the estimate \eqref{bound on gamma}. In particular, in this subsection we deal with both cases \textbf{(C1)} and \textbf{(C2)} simultaneously.

\begin{proposition} \label{prop:large_Sigma} 
For large enough $K$, depending on $\mu$, we have
\begin{equation}
\sum_{\Sigma \in \fra S^>_K} \abs{\cal V(\Sigma)} \;\leq\; C_K N M^{-2}\,.
\end{equation}
\end{proposition}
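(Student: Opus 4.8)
The plan is to estimate each $\cal V(\Sigma)$ with $\Sigma \in \fra S^>_K$ crudely, by taking absolute values inside all summations, and then to sum the resulting bounds over the (finitely many, thanks to the cutoff $\abs\Sigma \leq M^\mu/2$ forced by $b_\sigma \geq 1$) large skeletons. This brute-force approach is exactly what becomes affordable once the ladder resummation producing \eqref{def of V Sigma} has been carried out: for small skeletons one still needs to exploit oscillatory cancellations in the $\f b$-summation, but for $\abs\Sigma > K$ the combinatorial gain from having many bridges — the $2/3$-rule — outweighs everything. Concretely, starting from \eqref{def of V Sigma}, using $\absb{2\re z} \leq 2\abs{z}$, the nonnegativity of the entries of $S$, and the bound \eqref{bound on gamma}, one gets
\begin{equation*}
\abs{\cal V(\Sigma)} \;\leq\; \sum_{\f b \in \N^\Sigma} \indBB{2 \sum_{\sigma \in \Sigma} b_\sigma \leq M^\mu} 4 \,\absb{\wt \gamma_{n_1(\Sigma, \f b)}(E_1, \phi_1)} \,\absb{\wt \gamma_{n_2(\Sigma, \f b)}(E_2, \phi_2)} \sum_{\f y \in \bb T^{Q(\Sigma)}} \prod_{\{e, e'\} \in \Sigma} \pb{S^{b_{\{e,e'\}}}}_{x_e}\,,
\end{equation*}
where $\f x = \f x(\f y)$ is as in \eqref{def of V Sigma}. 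The only analytic input needed beyond this is \eqref{bound on gamma}, so both cases \textbf{(C1)} and \textbf{(C2)} are treated at once.

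Next I would estimate the spatial sum $\sum_{\f y} \prod_\sigma (S^{b_\sigma})_{x_e}$ for fixed $\f b$, using the elementary properties of $S$: the row sums satisfy $\sum_y (S^b)_{xy} = \cal I^b$, which is $O(1)$ for $b \leq M^\mu \ll M$; one has $\norm{S}_{\ell^2 \to \ell^2} \leq \cal I$; and every entry obeys $(S^b)_{xy} \leq C M^{-1}$ for all $b \geq 1$ (whence $\tr S^b \leq C N/M$ for $b \geq 2$). Summing out the labels $\f y$ one at a time along a spanning structure of the vertex-identification graph of $\Sigma$, each removal contributes either a harmless factor $\cal I^b = O(1)$ or a genuine gain $O(M^{-1})$, the pattern being dictated purely by the graph structure of $\Sigma$ and $Q(\Sigma)$; this produces a bound $\sum_{\f y}\prod_\sigma (S^{b_\sigma})_{x_e} \leq C^{\abs\Sigma} N M^{-g(\Sigma)}$, with $g(\Sigma)$ the number of ``costly'' removals. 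This is where the defining constraints of $\fra R$ — the absence of parallel bridges in $\Sigma$ and of singleton blocks in $Q(\Sigma)$, both coming from the nonbacktracking condition (cf.\ Lemma \ref{lem: skeletons} and the restrictions following it, e.g.\ $b_3 + b_4 \geq 3$ in \eqref{def_cal_A}) — are essential: the $2/3$-rule (Lemma \ref{lem:2/3_rule}) guarantees that $g(\Sigma) \geq c \abs\Sigma$ for a fixed $c > 0$ once $\abs\Sigma$ is large. Combining with the control of the $\f b$-summation coming from the decay $\absb{\wt\gamma_n} \leq C_q (\eta n)^{-q}$ in \eqref{bound on gamma} — which effectively confines the multiplicities to $\sum_\sigma b_\sigma \lesssim \eta^{-1} = M^\rho$ and, via the resulting multinomial constraint, costs only $\leq (C M^\rho)^{\abs\Sigma}/\abs\Sigma!$ — one arrives at a per-skeleton estimate $\abs{\cal V(\Sigma)} \leq N (C M^{-\delta})^{\abs\Sigma}$ for some $\delta = \delta(\mu) > 0$, positivity of $\delta$ using precisely that $\rho < \mu < 1/3$.

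Finally I would sum over $\Sigma \in \fra S^>_K$. Using the standard combinatorial bound $\absb{\hb{\Sigma \in \fra S \col \abs\Sigma = k}} \leq (Ck)^k$ on the number of skeletons of a given size (absorbed by the $1/\abs\Sigma!$ above through $k^k \leq e^k k!$, which was already folded into the constant above), one obtains
\begin{equation*}
\sum_{\Sigma \in \fra S^>_K} \abs{\cal V(\Sigma)} \;\leq\; N \sum_{k > K} \pb{C M^{-\delta}}^k \;\leq\; C_K \, N M^{-\delta K}\,,
\end{equation*}
and choosing $K$ large enough, depending on $\mu$ through $\delta(\mu)$, so that $\delta K \geq 2$ yields the claimed bound $C_K N M^{-2}$.

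The main obstacle is the middle step — the per-skeleton estimate — whose heart is the $2/3$-rule of Lemma \ref{lem:2/3_rule}: one must show that the rigid structure of skeletons arising from pairings of two chains, subject to the $\fra R$-constraints, always forces at least $\sim \abs\Sigma$ costly label removals, enough to beat simultaneously the exponential number of skeletons and the cost of the multiplicity summation. A secondary, more bookkeeping, difficulty is tracking the interplay between the soft cutoff $M^\rho$ (from the decay of $\wt\gamma$) and the hard truncation $M^\mu$ so that the net exponent $\delta(\mu)$ is genuinely positive for every admissible $\rho < \mu < 1/3$.
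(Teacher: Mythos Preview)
Your proposal is correct and follows essentially the same route as the paper: take absolute values inside everything, estimate the spatial sum $\sum_{\f y}$ via a spanning tree of the graph $\cal Y(\Sigma)$ on $Q(\Sigma)$ (non-tree edges give $M^{-1}$, tree edges give $O(1)$), invoke the $2/3$-rule to get at least $\abs{\Sigma}/3 - 5/3$ factors of $M^{-1}$, bound the $\f b$-summation by a multinomial count, and finally cancel the $(Ck)^k$ skeletons of size $k$ against the $1/k!$ from that count, leaving a geometric series $\sum_{k > K} (C M^{-\epsilon})^k$ with $\epsilon > 0$.

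The one place where you diverge from the paper is in controlling the $\f b$-sum. You propose using the \emph{decay} $\abs{\wt\gamma_n} \leq C_q (\eta n)^{-q}$ to confine $\sum_\sigma b_\sigma \lesssim M^\rho$ effectively. The paper instead uses only the trivial bound $\abs{\wt\gamma_n} \leq C$ together with the hard indicator $\indb{2\sum_\sigma b_\sigma \leq M^\mu}$ already sitting in \eqref{def of V Sigma}, which immediately yields $\sum_{\f b} \ind{\cdots} = \binom{[M^\mu/2]-1}{\abs\Sigma - 1} \leq M^{\mu\abs\Sigma}/(\abs\Sigma - 1)!$. Both work (the resulting per-term factor is $(C M^{\mu - 1/3})^k$ for the paper and $(C M^{\rho - 1/3})^k$ for you, both summable since $\rho < \mu < 1/3$), but the paper's version is cleaner: it avoids the ``soft vs.\ hard cutoff'' bookkeeping you flag at the end, and it makes transparent that the \emph{only} analytic input needed here is the uniform bound on $\wt\gamma_n$ --- the decay in $\eta n$ is irrelevant for the large-skeleton estimate. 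Your worry about $\delta(\mu)$ being positive is therefore a non-issue: the exponent is simply $1/3 - \mu > 0$, with the additive $M^{5/3}$ prefactor (from the constant in the $2/3$-rule) absorbed by taking $K$ large.
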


Recall that, according to Proposition \ref{prop: main}, the value of the main terms (the dumbbell skeletons)
 is larger than $N M^{-1}$. 
The rest of this subsection is devoted to the proof of Proposition \ref{prop:large_Sigma}.
We begin by introducing the following construction, which we shall make use of throughout the remainder of the paper. See Figure \ref{fig:Z_Sigma} for an illustration.

\begin{definition} \label{def:Z_Sigma}
Let $\Sigma \in \fra S$ be a skeleton pairing. We define a graph $\cal Y(\Sigma)$ on the vertex set $V(\cal Y(\Sigma)) \deq Q(\Sigma)$ as follows. Each bridge $\{e,e'\} \in \Sigma$ gives rise to the edge $\{q,q'\}$ of $\cal Y(\Sigma)$, where $q$ and $q'$ are defined as the blocks of $Q(\Sigma)$ that contain $a(e)$ and $b(e)$ respectively. (Note that, by definition of $Q$, we also have $a(e') \in q$ and $b(e') \in q'$).
We call  $\cal Y(\Sigma)$ the graph associated with $\Sigma$.
\end{definition}
\begin{figure}[ht!]
\begin{center}
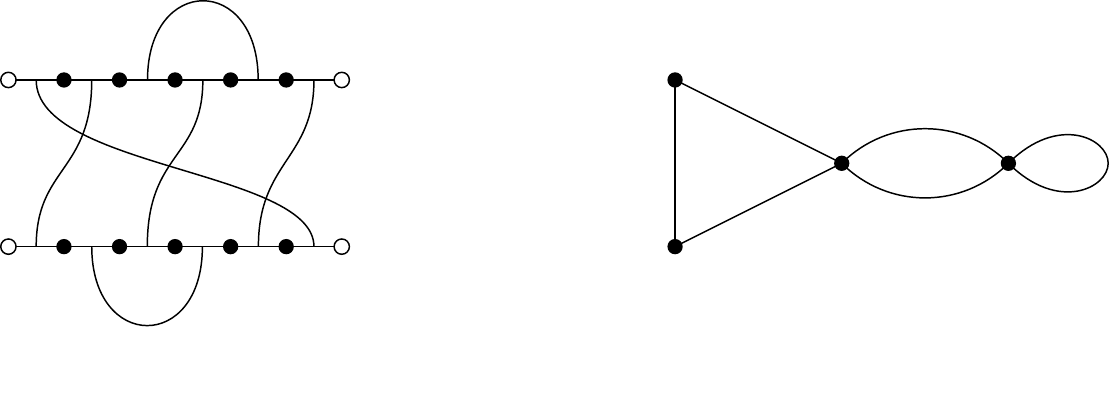
\end{center}
\caption{A skeleton pairing $\Sigma$ together with its associated graph $\cal Y(\Sigma)$.
 In $\Sigma$ we use the letters $a,b,c,d$ next to the vertices to indicate the four blocks of $Q(\Sigma)$. (We emphasize that the vertices of $\cal Y(\Sigma)$, unlike those of $\Sigma$, are not classified using colours; our use of black dots in the right-hand
picture has no mathematical relevance.) 
\label{fig:Z_Sigma}}
\end{figure}

Recall Definition \ref{def:C_Gamma} for the meaning of $\cal C(\Sigma)$. 
Then $\cal Y(\Sigma)$ is simply obtained as a minor of  $\cal C(\Sigma)$
after contracting (identifying) vertices that belong to the same blocks
of $Q(\Sigma)$ and replacing every  pair of edges of $\cal C(\Sigma)$ forming a bridge  with a
single edge. In particular, the skeleton bridges of $\Sigma$ become the edges of $\cal Y(\Sigma)$,
i.e.\ $\Sigma$ and $E(\cal Y(\Sigma))$ may be canonically identified. Similarly,
$Q(\Sigma)$ is canonically identified with $V(\cal Y(\Sigma))$, the vertex set
of the associated graph.

\begin{lemma} \label{lem:Z_Sigma_connected}
For any $\Sigma \in \fra S$ the associated graph $\cal Y(\Sigma)$ is connected.
\end{lemma}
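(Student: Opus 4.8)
The plan is to unwind the definitions, using two facts: the underlying graph $\cal C(\Sigma) = \cal C_1 \sqcup \cal C_2$ is a disjoint union of two paths, and $\Sigma$ is a \emph{connected} pairing. I would start from the elementary remark that, for any edge $e \in E(\cal C(\Sigma))$, the bridge $\{e, e'\} \in \Sigma$ containing $e$ contributes, via Definition \ref{def:Z_Sigma}, an edge of $\cal Y(\Sigma)$ joining the block of $Q(\Sigma)$ that contains $a(e)$ to the block that contains $b(e)$. Walking along the path $\cal C_i$ from $a(\cal C_i)$ to $b(\cal C_i)$ and applying this remark to each of its $n_i$ edges in succession, one sees that the blocks of $Q(\Sigma)$ containing consecutive vertices of $\cal C_i$ are equal or adjacent in $\cal Y(\Sigma)$, hence all lie in a single connected component $\cal K_i$ of $\cal Y(\Sigma)$. (Here $n_1, n_2 \geq 1$, since a bridge intersecting both $E(\cal C_1)$ and $E(\cal C_2)$ exists by $\Sigma \in \fra M_c$, recalling $\fra S \subset \fra M_c$ from Lemma \ref{lem: skeletons}.) In particular, every block of $Q(\Sigma)$ meeting $V(\cal C_1)$ lies in $\cal K_1$, and every block meeting $V(\cal C_2)$ lies in $\cal K_2$.

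The second step links $\cal K_1$ and $\cal K_2$. Since $\Sigma \in \fra M_c$, by \eqref{conn_pair} there is a bridge $\{e, e'\} \in \Sigma$ with $e \in E(\cal C_1)$ and $e' \in E(\cal C_2)$ (the two edges of the bridge must lie in different chains, as $E(\cal C_1)$ and $E(\cal C_2)$ are disjoint while both intersection conditions hold). By property (i) in the defining list of $Q(\Sigma)$, the vertices $a(e)$ and $b(e')$ belong to the same block $q \in Q(\Sigma)$. Since $a(e) \in V(\cal C_1)$ and $b(e') \in V(\cal C_2)$, the block $q$ meets both $V(\cal C_1)$ and $V(\cal C_2)$; hence $\cal K_1 = \cal K_2$, and we denote this common component by $\cal K$.

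Finally, every block of $Q(\Sigma)$ is a nonempty subset of $V(\cal C(\Sigma)) = V(\cal C_1) \sqcup V(\cal C_2)$, so it meets $V(\cal C_1)$ or $V(\cal C_2)$ and therefore belongs to $\cal K$; thus $V(\cal Y(\Sigma)) = \cal K$ and $\cal Y(\Sigma)$ is connected. There is no genuine obstacle in this argument; the only point demanding slight care is to translate correctly the defining conditions of $\fra M_c$ (existence of a cross-chain bridge) and of the minimal vertex partition $Q(\Sigma)$ into the adjacency and identification statements about $\cal Y(\Sigma)$ used in the first two steps.
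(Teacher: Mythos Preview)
Your proof is correct and follows essentially the same approach as the paper, which dispatches the lemma in a single sentence (``This follows immediately from the definition of $\cal Y(\Sigma)$ and the fact that $\Sigma \in \fra M_c$''); you have simply unpacked that sentence carefully, first showing that the blocks meeting each chain $\cal C_i$ lie in a single component of $\cal Y(\Sigma)$ and then using a cross-chain bridge from $\fra M_c$ to merge the two components.
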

\begin{proof}
This follows immediately from the definition of $\cal Y(\Sigma)$ and the fact that $\Sigma \in \fra M_c$.
\end{proof}

Next, let $\Sigma \in \fra S$ be fixed. Starting from the definition \eqref{def of V Sigma}, we use \eqref{bound on gamma} to get
\begin{equation} \label{cal_V_estimate}
\abs{\cal V(\Sigma)} \;\leq\; C \sum_{\f b \in \N^\Sigma} \indBB{2 \sum_{\sigma \in \Sigma} b_\sigma \leq M^\mu }
\sum_{\f y \in \bb T^{Q(\Sigma)}} \sum_{\f x \in \bb T^{V(\Sigma)}} \pBB{\prod_{q \in Q(\Sigma)} \prod_{i \in q} \ind{x_i = y_q}} \pBB{\prod_{\{e, e'\} \in \Sigma} (S^{b_{\{e,e'\}}})_{x_e}}\,.
\end{equation}
For future reference we note that the right-hand side of \eqref{cal_V_estimate} may also be written without the partition $Q(\Sigma)$ as
\begin{equation}
C \sum_{\f b \in \N^\Sigma} \indBB{2 \sum_{\sigma \in \Sigma} b_\sigma \leq M^\mu} \sum_{\f x \in \bb T^{V(\Sigma)}}  I_0(\f x) \prod_{\{e,e'\} \in \Sigma} J_{\{e,e'\}}(\f x) \, \pb{S^{b_{\{e,e'\}}}}_{x_e}\,,
\end{equation}
where $I_0$ was defined in \eqref{def_I_ind} and $J_{\{e,e'\}}$ in \eqref{def_I_sigma}.
Recall that the free variables in \eqref{cal_V_estimate} are $\f y$. Using $\cal Y(\Sigma)$, we may rewrite \eqref{cal_V_estimate} in the form
\begin{equation} \label{cal_V_estimate2}
\abs{\cal V(\Sigma)} \;\leq\; C \sum_{\f b \in \N^{E(\cal Y(\Sigma))}} \indBB{2 \sum_{e \in E(\cal Y(\Sigma))} b_e \leq M^\mu }
\sum_{\f y \in \bb T^{V(\cal Y(\Sigma))}} \pBB{\prod_{e \in E(\cal Y(\Sigma))} (S^{b_e})_{y_e}}\,,
\end{equation}
where we recall the convention $y_{\{q,q'\}} = (y_q, y_{q'})$.

Let
\begin{equation} \label{def_Qb}
Q_b(\Sigma) \;\deq\; \hb{q \in Q(\Sigma) \col q \text{ contains a black vertex of } V(\Sigma)}\,.
\end{equation}
It is easy to see that $\abs{Q(\Sigma) \setminus Q_b(\Sigma)} \leq 2$.
Next, we state the fundamental counting rule behind our estimates; its analogue in \cite{EK1} was called the 2/3-rule.  It says that each block of $Q(\Sigma)$ contains at least three
vertices,  with the possible exception of blocks consisting exclusively of white vertices.

\begin{lemma}[2/3-rule] \label{lem:2/3_rule}
Let $\Sigma \in \fra S$. 
For all $q \in Q_b(\Sigma)$ we have $\abs{q} \geq 3$. Moreover,
\begin{equation} \label{3Qb_estimateshort}
\abs{Q_b(\Sigma)} \;\leq \;  \frac{2}{3} \abs{\Sigma} + \frac{2}{3}\,.
\end{equation}
\end{lemma}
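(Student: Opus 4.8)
The plan is to deduce the quantitative bound \eqref{3Qb_estimateshort} immediately from the first assertion by a counting argument, so that all the work lies in showing $\abs{q} \geq 3$ for every block $q$ containing a black vertex. First I would record the elementary bookkeeping: since $\cal C(\Sigma) = \cal C_1(n_1) \sqcup \cal C_2(n_2)$ and $\Sigma$ is a pairing of its edges, $\cal C(\Sigma)$ has $n_1 + n_2 = 2\abs{\Sigma}$ edges and $n_1 + n_2 + 2 = 2\abs{\Sigma} + 2$ vertices, and all of these vertices except the four chain endpoints are black. Granting the first assertion, the blocks of $Q_b(\Sigma)$ are pairwise disjoint subsets of $V(\Sigma)$ each of size at least three, so $3\abs{Q_b(\Sigma)} \leq \abs{V(\Sigma)} = 2\abs{\Sigma} + 2$, which is exactly \eqref{3Qb_estimateshort}. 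I would also note in passing (as the paper does after the statement, though it is not needed for the lemma itself) that $\abs{Q(\Sigma) \setminus Q_b(\Sigma)} \leq 2$, since the at most four white vertices are identified in pairs by $I_0$.

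For the first assertion I would fix a block $q \in Q_b(\Sigma)$ and a black vertex $i \in q$, lying (say) in $\cal C_1$; being interior to a chain, $i$ has a unique incoming edge $e_1$ (with $b(e_1) = i$) and a unique outgoing edge $e_2$ (with $a(e_2) = i$), and I write $\{e_1, e_1'\}$ and $\{e_2, e_2'\}$ for the bridges of $\Sigma$ containing them. The point is that the definition of the minimal vertex partition $Q(\Sigma)$ forces $q$ to contain the three vertices $i$, $a(e_1')$, and $b(e_2')$ (from $\{e_1,e_1'\}$ one gets $i = b(e_1) \sim a(e_1')$, and from $\{e_2,e_2'\}$ one gets $i = a(e_2) \sim b(e_2')$), so it suffices to prove these three vertices are pairwise distinct. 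I would argue that each coincidence is exactly one of the two configurations forbidden for $\Sigma \in \fra S$: if $a(e_1') = i$ then $e_1' = e_2$ (as $e_2$ is the only edge leaving $i$), so the bridge $\{e_1, e_2\}$ joins two adjacent edges at $i$, which makes $\{i\}$ a block of $Q(\Sigma)$ of size one — contradiction; the case $b(e_2') = i$ is symmetric; and if $a(e_1') = b(e_2')$ then, together with $b(e_1) = a(e_2) = i$, the two bridges $\{e_1,e_1'\}$ and $\{e_2,e_2'\}$ satisfy the definition of being parallel — contradiction. A small preliminary remark of the same kind handles the subcase in which these two bridges coincide.

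The only genuinely delicate point is making this case analysis airtight: one must check that for a black $i$ the edges $e_1, e_2$ really are the only edges at $i$, and that a bridge joining two adjacent edges at a black vertex does produce a size-one block of $Q(\Sigma)$ (so it is excluded by membership in $\fra S$, and one need not invoke $\fra R$). Everything else is routine — the boundary cases $n_1 = 1$ or $n_2 = 1$ require no modification since the vertex $i$ in play is still black of degree two, and the concluding counting inequality is immediate. I would expect the write-up to be short: essentially two pictures (a bridge on adjacent edges, and a pair of parallel bridges) plus the disjointness count.
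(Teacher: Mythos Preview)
Your proposal is correct and follows the same line as the paper's proof. The paper argues more tersely: it first notes $\abs{q} \geq 2$ directly from the definition of $\fra S$, then observes that if $\abs{q} = 2$ with $i \in q$ black, the two bridges touching $i$ must be parallel, contradicting $\Sigma \in \fra S$; the counting step $3\abs{Q_b(\Sigma)} \leq \abs{V(\Sigma)} = 2\abs{\Sigma}+2$ is identical to yours. Your explicit identification of the three vertices $i, a(e_1'), b(e_2')$ and the case-by-case check that each coincidence forces either a size-one block or a pair of parallel bridges is exactly the content hidden in the paper's one-line assertion, so the two arguments are the same up to level of detail.
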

\begin{proof}
By definition of $\fra S$, we have that $\abs{q} \geq 2$. Now suppose that $\abs{q} = 2$. Let $i \in q$ be a black vertex of $V(\Sigma)$. Since $\abs{q} = 2$, we conclude that the two bridges of $\Sigma$ touching $i$ (see Definition \ref{def:ladders} (ii)) are parallel. This is in contradiction with the definition of $\fra S$. Finally, \eqref{3Qb_estimateshort} follows directly from $\abs {q} \geq 3$, since
\begin{equation} \label{3Qb_estimate}
3 \abs{Q_b(\Sigma)} \;\leq\; \sum_{q \in Q_b(\Sigma)} \abs{q} \;\leq\; \abs{V(\Sigma)} \;=\; 2 \abs{\Sigma} + 2\,. \qedhere
\end{equation}
\end{proof}
Since $\abs{Q(\Sigma)} \leq \abs{Q_b(\Sigma)} + 2$, we get from \eqref{3Qb_estimateshort} that
\begin{equation} \label{Q_Sigma_bound}
\abs{Q(\Sigma)} \;\leq\;  \frac{2 \abs{\Sigma}}{3} + \frac{8}{3}\,.
\end{equation}
Next, using Lemma \ref{lem:Z_Sigma_connected} we choose some (immaterial) spanning tree $\cal T$ of $\cal Y(\Sigma)$. Clearly, $\abs{E(\cal T)} = \abs{Q(\Sigma)} - 1$ and $\abs{E(\cal Y(\Sigma))} = \abs{\Sigma}$, so that \eqref{Q_Sigma_bound} yields
\begin{equation} \label{E_Z_diff_leq}
\absb{E(\cal Y(\Sigma)) \setminus E(\cal T)} \;\geq\; \frac{\abs{\Sigma}}{3} - \frac{5}{3}\,.
\end{equation}
We now sum over $\f y$ in \eqref{cal_V_estimate2}, using the estimates, valid for any $b \leq M^\mu$,
\begin{equation} \label{S_basic_bounds}
\sum_{z} (S^b)_{yz} \;\leq\; C \,, \qquad (S^b)_{yz} \;\leq\; \frac{C}{M}\,,
\end{equation}
which are easy consequences of $S_{yz} \leq C M^{-1}$ and $\sum_z S_{yz} = \cal I$. In the product on the right-hand side of \eqref{cal_V_estimate2}, we estimate each factor associated with $\{q,q'\} \notin E(\cal T)$ by $C M^{-1}$, using the second estimate of
\eqref{S_basic_bounds}. We then sum out all of the $\f y$-labels, starting from the leaves of $\cal T$ (after some immaterial choice of root), at each summation using the first estimate of \eqref{S_basic_bounds}. This yields
\begin{equation*}
\sum_{\f y \in \bb T^{V(\cal Y(\Sigma))}} \pBB{\prod_{e \in E(\cal Y(\Sigma))} (S^{b_e})_{y_e}} \;\leq\; N \pbb{\frac{C}{M}}^{\abs{E(\cal Y(\Sigma)) \setminus E(\cal T)}} \;\leq\; N \pbb{\frac{C}{M}}^{\abs{\Sigma}/3 - 5/3}\,,
\end{equation*}
where in the last step we used \eqref{E_Z_diff_leq}. The factor $N$ results from the summation over the label associated with the root of $\cal T$. Thus we find from \eqref{cal_V_estimate2}
\begin{multline*}
\abs{\cal V(\Sigma)} \;\leq\; N \pbb{\frac{C}{M}}^{\abs{\Sigma}/3 - 5/3} \sum_{\f b \in N^{E(\cal Y(\Sigma))}} \indBB{2 \sum_{e \in E(\cal Y(\Sigma))} b_e \leq M^\mu}
\\
=\; N \pbb{\frac{C}{M}}^{\abs{\Sigma}/3 - 5/3} \binom{[M^\mu/2] - 1}{\abs{\Sigma} - 1}
\;\leq\; N \pbb{\frac{C}{M}}^{\abs{\Sigma}/3 - 5/3} \frac{M^{\mu \abs{\Sigma}}}{(\abs{\Sigma} - 1) !}\,.
\end{multline*}

Next, for any $m \in \N$, a simple combinatorial argument shows that the number of skeleton pairings $\Sigma \in \fra S$ satisfying $\abs{\Sigma} = m$ is bounded by 
\begin{equation} \label{comb_bound}
(2m + 1) \, \frac{(2m)!}{m! 2^m} \;\leq\; C^m \, m!\,;
\end{equation}
here the factor $\frac{(2m)!}{m! 2^m}$ is the number of pairings of $2m$ edges, and the factor $2m + 1$ is the number of graphs $\cal C$ with $2m$ edges.
We therefore conclude that
\begin{equation*}
\sum_{\Sigma \in \fra S^>} \abs{\cal V(\Sigma)} \;\leq\; N \sum_{m = K}^\infty C^m m! \pbb{\frac{C}{M}}^{m/3 - 5/3} \frac{M^{\mu m}}{(m - 1) !} \;\leq\;
N M^{5/3} \sum_{m = K}^\infty \p{C M^{\mu - 1/3}}^{m} \;\leq\; C_K N M^{5/3 + K(\mu - 1/3)}\,.
\end{equation*}
Choosing $K$ large enough completes the proof of Proposition \ref{prop:large_Sigma}.

We conclude this subsection by summarizing the origin of the restriction $\mu < 1/3$ (and hence $\rho < 1/3$), as it appears in the preceding proof of Proposition \ref{prop:large_Sigma}. The total contribution of a skeleton is determined by a competition between its \emph{size} (given by the number of bridges) and its \emph{entropy factor} (given by the number of independent summation labels $\f y$). Each bridge yields, after resummation, a factor $(M\eta)^{-1}$, so that the size of the graph is $(M\eta)^{-s}$ where $s = \abs{\Sigma}$ is the number of ladders. The entropy factor is $M^\ell$ where $\ell = \abs{Q(\Sigma)}$ is the number independent summation labels. The 2/3-rule from Lemma \ref{lem:2/3_rule} states roughly that $\ell \leq 2b /3$. The sum of the contributions of all skeletons is convergent if $(M\eta)^{-s} M^\ell \ll 1$, which, by the 2/3-rule, holds provided that $\eta\gg M^{-1/3}$.

\subsection{The error terms: small skeletons} \label{sec:small_skeletons}
We now focus on the estimate of the small skeletons, i.e.\ we estimate $\cal V(\Sigma)$ for $\Sigma \in \fra S^\leq$
(recall the splitting \eqref{smalllarge}).
 The details of the following estimates will be somewhat different for the two cases \textbf{(C1)} and \textbf{(C2)}; for definiteness, we focus on the (harder) case \textbf{(C2)}, i.e.\ we assume that $\phi_1$ and $\phi_2$ both satisfy \eqref{non_Cauchy}. The analogue of the following result in the case \textbf{(C1)} is given in Proposition \ref{prop:small_Sigma_c1} at the end of this subsection.

\begin{proposition} \label{prop:small_Sigma}
Suppose that $\phi_1$ and $\phi_2$ satisfy \eqref{non_Cauchy}. Suppose moreover that \eqref{D leq kappa} holds for some small enough $c_* > 0$. Then for any fixed $K \in \N$ and small enough $\delta > 0$ in Proposition \ref{prop: expansion with trunction} there exists a constant $c_0 > 0$ such that 
\begin{equation} \label{sum_V_Sigma_small}
\sum_{\Sigma \in \fra S^\leq_K} \abs{\cal V(\Sigma)} \;\leq\; \frac{C_K N}{M} R_2(\omega + \eta)  M^{-c_0}\,,
\end{equation}
 where we recall the definition of $R_2$ from \eqref{def_R2}.
\end{proposition}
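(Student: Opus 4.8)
The plan is to estimate each $\cal V(\Sigma)$ for $\Sigma\in\fra S^\leq_K$ individually, exploiting the oscillations in the $\f b$-summations (here the large-$\abs\Sigma$ combinatorial catastrophe is absent, since $K$ is fixed, so we may afford term-by-term estimates over a bounded family of skeleton shapes). First I would pass, as in Section \ref{sec:large_Sigma}, from $\cal V(\Sigma)$ to the form \eqref{cal_V_estimate2} indexed by the associated graph $\cal Y(\Sigma)$, but now \emph{without} replacing $\wt\gamma$ by its absolute-value bound \eqref{bound on gamma}: the whole point is that the phases carried by the $\wt\gamma$'s produce cancellations that make $\cal V(\Sigma)$ much smaller than the naive bound $N M^{-\abs\Sigma/3}$. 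Concretely, for each skeleton bridge $\sigma$ I would resum the geometric-type series in its multiplicity $b_\sigma$ (exactly as in the passage from \eqref{def_V_main} to \eqref{common expression for V(D)}), which converts a power $S^{b_\sigma}$ together with the associated phase factors $\ee^{\ii (n_1+n_2)\arcsin E}$ into a resolvent $(1-\alpha S)^{-1}$-type kernel, where $\alpha=\ee^{\ii(A_1-A_2)}\approx 1-\ii\,\frac{2\omega}{\pi\nu}$; the key small parameter is then $1-\alpha\asymp\omega+\eta$, and each such resolvent contributes, after the $\f y$-summation against the spectral profile of $S$ described in Section \ref{sec: heuristic leading term}, a factor roughly $(LW)^{-d}(\omega+\eta)^{d/2-2}$ from the near-singular eigenvalue structure of $S$, i.e.\ exactly the $R_2$-type behaviour.

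Next I would organize the bookkeeping using the spanning tree $\cal T$ of $\cal Y(\Sigma)$ and the 2/3-rule (Lemma \ref{lem:2/3_rule}): the tree edges are summed freely using $\sum_z(S^b)_{yz}\leq C$, while each of the $\geq \abs\Sigma/3 - 5/3$ non-tree edges yields the gain $(S^b)_{yz}\leq C/M$ — but now, crucially, for the non-tree edges carrying a near-singular resolvent I would instead use the sharper $\ell^\infty\!\to\ell^\infty$ bound $\sum_z (1-\alpha S)^{-1}_{yz}\lesssim (\omega+\eta)^{-1}$ together with the diffusive decay of the resolvent kernel on scale $\ell_\eta\asymp W/\sqrt{\omega+\eta}$, established through the Fourier representation $\wh S_W(q)\approx 1-q\cdot D q$. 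The outcome of this step is, for each $\Sigma\in\fra S^\leq_K$, a bound of the form
\begin{equation*}
\abs{\cal V(\Sigma)} \;\leq\; \frac{C_K N}{M}\, M^{-c(\Sigma)} R_2(\omega+\eta)\,,
\end{equation*}
where $c(\Sigma)>0$ is a strictly positive exponent coming from the fact that $\Sigma\in\fra S^*$ is \emph{not} a dumbbell: the dumbbells $D_1,\dots,D_8$ are precisely the skeletons for which all the powers of $M^{-1/3}$ are saturated and cancel against the entropy, whereas any other admissible skeleton in $\fra S^*$ has either an extra non-tree edge beyond the minimum forced by the 2/3-rule, or a block of $Q(\Sigma)$ of size $\geq 4$, or an extra loop in $\cal Y(\Sigma)$, each of which buys a spare factor $M^{-c}$. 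Summing over the finitely many $\Sigma\in\fra S^\leq_K$ and setting $c_0\deq\min_{\Sigma\in\fra S^\leq_K} c(\Sigma)$ (which depends only on $K$, hence is admissible) gives \eqref{sum_V_Sigma_small}.

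The main obstacle I expect is the careful identification of the gain $c(\Sigma)$ for \emph{every} small non-dumbbell skeleton — in other words, proving that the dumbbells are genuinely the unique maximizers. This requires a graph-theoretic case analysis of $\fra S^\leq_K$: one must show that whenever $\Sigma$ fails to be a dumbbell, at least one of the structural deficiencies above occurs, and then quantify how it converts into a power of $M^{-c}$ after the resummation and the $\f y$-counting; the delicate points are handling resolvent kernels that are near-singular along several bridges simultaneously (where one must avoid double-counting the $(\omega+\eta)^{-1}$ gains and instead track how the diffusive profiles localize the $\f y$-sums) and controlling the cases $d=1,2$ where the extra factor $R_2(\omega+\eta)$ is itself mildly divergent as $\omega+\eta\to 0$, so that the spare $M^{-c}$ must be shown to dominate $\ind{d=1}(\omega+\eta)^{-1/2}+\ind{d=2}\abs{\log(\omega+\eta)}$ using $\eta=M^{-\rho}$ with $\rho<1/3$ and the constraint $2\delta<\mu-\rho<3\delta$ on the truncation parameters.
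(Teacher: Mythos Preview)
Your overall architecture is right---resum the $\f b$-series into resolvents of $S$, then estimate the resulting $\f y$-sum via a spanning tree of $\cal Y(\Sigma)$---but you have misidentified the phases and therefore the mechanism that separates the non-dumbbell skeletons from the dumbbells. Not every bridge carries the near-singular phase $\alpha=\ee^{\ii(A_1-A_2)}$. Recall from \eqref{claim about gamma n} that $\gamma_n$ carries a factor $(-\ii)^n\ee^{\ii n A}$. A bridge $\sigma\in\Sigma_c$ that connects the two chains contributes $b_\sigma$ to both $n_1$ and $n_2$, and in $\cal V'(\Sigma)$ (the $\gamma_{n_1}\ol{\gamma_{n_2}}$ half) this produces the phase $(\ee^{\ii(A_1-A_2)})^{b_\sigma}$ you wrote down. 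But a \emph{domestic} bridge $\sigma\in\Sigma_1$ contributes $2b_\sigma$ to $n_1$ only, hence the phase $(-\ee^{2\ii A_1})^{b_\sigma}$, and similarly $(-\ee^{-2\ii A_2})^{b_\sigma}$ for $\sigma\in\Sigma_2$. In the bulk \eqref{D leq kappa} these domestic phases satisfy $\abs{1+\ee^{\pm 2\ii A_i}}\asymp 1$, so the associated resolvent $Z(-\ee^{\pm 2\ii A_i}S)$ is \emph{regular}: one gets $\abs{Z_{yz}}\leq C/M$ and $\sum_z\abs{Z_{yz}}\leq C\log N$, without any $R_2$ or $M^\mu$ loss. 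Only the connecting bridges produce the dangerous near-singular factors.

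This distinction is exactly what makes the case analysis work, and your proposed mechanism (``extra non-tree edge, or block $\geq 4$, or extra loop'') is incomplete without it. There exist non-dumbbell skeletons that \emph{saturate} the 2/3-rule (every block of $Q(\Sigma)$ has size exactly $3$): for these there is no extra loop and no large block, so by your account they would not give any gain. The missing structural lemma is that any $\Sigma\in\ol{\fra S}^\leq$ saturating the 2/3-rule with $\abs{Q(\Sigma)}\geq 4$ must contain at least one domestic bridge (look at a block $q=\{i,j,k\}$ of three black vertices and the three bridges of $\Sigma$ touching it; whichever way $i,j,k$ are distributed between $\cal C_1$ and $\cal C_2$, one of those bridges is domestic). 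A domestic bridge replaces one of the expensive connecting factors $M^\mu$ or $R_2(\omega+\eta)$ by a harmless $\log N$ or $1/M$, and this is the gain $M^{-c_0}$. In the non-saturated case a block of size $\geq 4$ sharpens \eqref{3Qb_estimateshort} to $\abs{Q(\Sigma)}\leq\tfrac{2\abs\Sigma+1}{3}$, which supplies the extra $M^{-1/3}$ directly. So the dichotomy is: saturated $\Rightarrow$ domestic bridge $\Rightarrow$ one fewer connecting factor; non-saturated $\Rightarrow$ improved entropy bound. Once you have this, the counting via the spanning tree goes through as in \eqref{main_bound_sketch}--\eqref{V'_leq_R}.
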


Note that, by Proposition \ref{prop: leading term}, the size of the dumbbell skeletons is
\begin{equation}\label{true}
\abs{\cal V_{\rm main}} \;\asymp\;  \frac{N}{M} \pB{1 + \ind{d \leq 3} (\omega + \eta)^{d/2 - 2} + \ind{d = 4} \absb{\log (\omega + \eta)}}\,, 
\end{equation}
unless $d = 2$ and $\omega \gg \eta$, in which case we have
\begin{equation}\label{true2}
\abs{\cal V_{\rm main}} \;\asymp\;  \frac{N}{M} (1 + \abs{\log \omega})\,.
\end{equation}
We conclude that the right-hand side of \eqref{sum_V_Sigma_small} is much smaller than the contribution of the dumbbell skeletons. In particular, the proof of
Proposition \ref{prop:small_Sigma} reveals why precisely the dumbbell skeletons provide the leading contributions.

In this section we give a sketch of the proof of  Proposition \ref{prop:small_Sigma}, followed by 
the actual proof in the next section.
  As explained at the beginning of Section \ref{sec:large_Sigma}, the combinatorics of the summation over $\Sigma$ 
are now trivial, since the cardinality of the set $\fra S^\leq \equiv \fra S^\leq_K$
depends only on $K$, which is fixed. However, the brutal estimate of \eqref{cal_V_estimate}, which neglects the oscillations present in the coefficients $\gamma$, is not good enough. For small skeletons, it is essential to exploit these oscillations.

First we undo the truncation in the definition of $\wt \gamma_{n_i}$ 
and use \eqref{gamma - g} to replace $\wt \gamma_{n_i}$ with $\psi^\eta_i * \gamma_{n_i}$  in the  definition \eqref{def of V Sigma} of $\cal V(\Sigma)$.
Then we rewrite the real parts in \eqref{def of V Sigma} using \eqref{2re2re}
this gives rise to two terms, and we focus on
 the first 
one, which we call $\cal V'(\Sigma)$. (The other one may be estimate in exactly the same way and is in fact 
smaller.)  The summation over $\f b$ in \eqref{def of V Sigma} can  now be performed explicitly using geometric series. The result is that each skeleton bridge $\sigma \in \Sigma$ encodes  an entry of the quantity $\cal Z(\sigma)$, which is roughly a resolvent of $S$ multiplied by a phase $\alpha$, i.e.\ $(1-\alpha S)^{-1}$. 
It turns out that these phases $\alpha$ depend strongly on the type of bridge they belong to. We split the set of skeleton bridges $\Sigma = \Sigma_d \sqcup \Sigma_c$ into the ``domestic bridges'' which join edges within the same component of $\cal C$ and ``connecting bridges'' which join edges in different components of $\cal C$; see Definition \ref{def_split_Sigma} below for more details. The critical regime is when $\alpha \approx 1$, which yields a singular resolvent $(1 - \alpha S)^{-1}$ (see the discussion on the spectrum of $S$ in Section \ref{sec: heuristic leading term}). 
The phase $\alpha$ associated with a domestic bridge is separated away from $1$, which yields a regular resolvent.
 (This may also be interpreted as strong oscillations in the geometric series of the resolvent expansion.)
 The phase $\alpha$ associated with a connecting bridge is close to $1$ and the associated resolvent is therefore much more  singular. More precisely (see Lemma \ref{lem: bounds on S} below),  we find that these resolvents $\cal Z(\sigma)$ satisfy the bounds
\begin{equation} \label{domestic_bound_sketch}
\absb{\cal Z(\sigma)_{yz}} \;\lesssim\; M^{-1}\,, \qquad
\sum_{z} \absb{\cal Z(\sigma)_{yz}} \;\lesssim\; 1
\end{equation}
for  domestic bridges  $\sigma \in \Sigma_d$ and
\begin{equation} \label{connecting_bound_sketch}
\absb{\cal Z(\sigma)_{yz}} \;\lesssim\; M^{-1} R_2(\omega + \eta)\,, \qquad
\sum_z \absb{\cal Z(\sigma)_{yz}} \;\lesssim\; M^\mu\,,
\end{equation}
for  connecting bridges  $\sigma \in \Sigma_c$. (Recall the definition of $R_2$ from \eqref{def_R2}.)

Using the bounds \eqref{domestic_bound_sketch} and \eqref{connecting_bound_sketch} we get a simple bound on $\cal V'(\Sigma)$. The rest of the argument is purely combinatorics and power counting:
we have to make sure that for any $\Sigma \in \fra S^\leq$ this bound is small enough, i.e.\ $o(N/M)$. Without loss of generality we may assume that $\Sigma$ does not contain a bridge that touches (see Definition \ref{def:ladders}) the 
two white vertices of the same component of $\cal C$. Indeed, if $\Sigma$ contains such a bridge, we can sum
up the  (coinciding) labels of the two white vertices  using the second bound of \eqref{domestic_bound_sketch}, which effectively
removes such a bridge, as depicted in Figure \ref{fig:SSbar} below. 
 In particular, we have $Q_b(\Sigma) = Q(\Sigma)$. (Recall the definitions of $Q(\Sigma)$ after \eqref{indicator function behind Q} and of $Q_b(\Sigma)$ from \eqref{def_Qb}).

We perform the summation over the labels $\f x$ as in Section \ref{sec:large_Sigma}: by choosing a spanning tree on the graph $\cal Y(\Sigma)$. Recall that there is a canonical bijection between the edges of $\cal Y(\Sigma)$ and the bridges of $\Sigma$. Denote by $\Sigma_t$ the bridges associated with the spanning tree of $\cal Y(\Sigma)$. The combinatorics rely on the following quantities:
\begin{align*}
\ell &\;\deq\; \abs{Q(\Sigma)} \;=\; \abs{V(\cal Y(\Sigma))} \;= \;  \text{number of independent labels}\,,
\\
s_d &\;\deq\; \abs{\Sigma_d} \;=\; \text{number of domestic bridges}\,,
\\
s_{t} &\;\deq\; \abs{\Sigma_c \cap \Sigma_t} \;=\; \text{number of connecting tree bridges}\,,
\\
s_{l} &\;\deq\; \abs{\Sigma_c \setminus \Sigma_t} \;=\; \text{number of connecting loop (i.e.\ non-tree) bridges}\,.
\end{align*}
Note that the total number of bridges is $s \deq \abs{\Sigma} = s_d + s_{t} + s_{l}$.
Moreover, $s\geq \ell -1$ since  $\cal Y(\Sigma)$ is connected and $s_t\leq\ell -1$ since
$s_t$ is part of a spanning tree. 
From the 2/3-rule in \eqref{3Qb_estimateshort} we conclude that $\abs{q} \geq 3$ for all $q\in Q(\Sigma)$ and
\begin{equation} \label{2/3_rule_sketch}
\ell \;\leq\; \frac{2 (1 + s)}{3}\,.
\end{equation}
Using the bounds \eqref{domestic_bound_sketch} and
 \eqref{connecting_bound_sketch}, we sum over the labels $\f x$ associated with the vertices of $\Sigma$, and find the estimate
\begin{equation} \label{main_bound_sketch}
\abs{\cal V'(\Sigma)} \;\lesssim\; N M^{\ell - s - 1} R_2^{s_{l}} M^{\mu s_{t}}.
\end{equation}
Indeed, the root of the spanning tree gives rise to a factor $N$; each one of the $s - \ell + 1$ bridges not associated with the spanning tree gives rise to a factor $M^{-1}$; each one of the $s_{l}$ connecting loop bridges gives rise to an additional factor $R_2$; and each one of the $s_{t}$ connecting tree bridges gives rise to a factor $M^\mu$.

It is instructive to compare the upper bound \eqref{main_bound_sketch} for $\Sigma$ being a dumbbell to the true size of the dumbbell skeletons from \eqref{true}.
 Since we exclude pairings with bridges touching the two white vertices of the same component of $\cal C$, we may take $\Sigma$ to be $D_1$ or $D_5$ (see Figure \ref{fig: dumbbell}). Of these two, $D_5$ saturates the 2/3-rule and is of leading order. For $\Sigma = D_5$ we have $\ell = 2$, $s = 2$, $s_l = 1$, $s_t = 1$. Hence the bound \eqref{main_bound_sketch} reads
\begin{equation} \label{estimate_D5}
\abs{\cal V'(D_5)} \;\lesssim\; \frac{N}{M} R_2(\omega + \eta) M^\mu\,.
\end{equation}
This is in general much larger than the true size  \eqref{true}; 
they become comparable for $\omega + \eta \asymp M^{-\mu}$ (i.e.\ on very small scales), which is ruled out by our assumptions on $\omega$ and $\eta$. 

Now we explain how the estimate on $\cal V'(\Sigma)$ can be improved
if $\Sigma$ \emph{is not a dumbbell skeleton}. We rely on two simple but
fundamental observations.
 First, if $\Sigma$ does not saturate the 2/3-rule then the right-hand side of \eqref{main_bound_sketch} contains an extra power of $M^{-1/3}$ as compared to the leading term \eqref{estimate_D5}.
 Second, if $\Sigma$ saturates the 2/3-rule and is not a dumbbell skeleton
then $\Sigma$ must contain a domestic bridge (joining edges within the same component of $\cal C$).
Having a domestic bridge implies that $s_l + s_t \leq s - 1$ instead of the trivial bound $s_l + s_t \leq s$. This implies that the power of one of the large factors $R_2$ or $M^\mu$ on the right-hand side of \eqref{main_bound_sketch} will be reduced by one; as it turns out, this is sufficient to make 
the right-hand side of \eqref{main_bound_sketch} subleading.
Note that the absence of such domestic bridges in $\Sigma$ 
is the key feature that singles out the dumbbells  among all
skeletons that saturate the $2/3$-rule. This explains why the
leading contribution in \eqref{F in terms of skeletons} comes from the dumbbell skeletons.

We now  explain these two scenarios more precisely.  For the rest of this 
subsection we suppose that $\Sigma$ is not a dumbbell skeleton. Hence
\begin{equation} \label{s-lgeq1}
s \;\geq\; 3 \,, \qquad \ell \;\geq\; 2 \,, \qquad s - \ell \;\geq\; 1\,;
\end{equation}
the first two estimates are immediate, and the last one follows from the first combined with \eqref{2/3_rule_sketch} and the fact that $s - \ell \in \N$.

Suppose first that $\Sigma$ saturates the 2/3-rule \eqref{2/3_rule_sketch}. Then $\abs{q} = 3$ for all $q \in Q(\Sigma)$, and it is not too hard to see that $\Sigma$ must contain a domestic bridge, i.e.\ $s_d \geq 1$. Roughly, this follows from the observation that in order to get a block of size three, the bridges touching the vertices of this block must be as in Figure \ref{fig:three-lump} below. Plugging \eqref{2/3_rule_sketch} into \eqref{main_bound_sketch} yields
\begin{equation*}
\abs{\cal V'(\Sigma)} \;\lesssim\; \frac{N}{M} M^{2/3 - s/3} R_2^{s_{l}} M^{\mu s_{t}}
\;\leq\; \frac{N}{M} M^{2/3 - s/3} R_2^{s_{l} + s_t - \ell + 1} M^{\mu (\ell - 1)}
\;\leq\; \frac{N}{M} M^{2/3 - s/3} R_2^{s - \ell} M^{\mu (\ell - 1)}\,,
\end{equation*}
where the second step follows from $R_2 \leq M^\mu$ and the third step from $s_l + s_t \leq s - 1$ (since $s_d \geq 1$). We conclude that
\begin{equation*}
\abs{\cal V'(\Sigma)} \;\lesssim\; \frac{N}{M} M^{1/3} (M^{-1/3} R_2)^{s - \ell} (M^{-1/3} M^\mu)^{\ell - 1} \;\ll\; \frac{N}{M} R_2\,,
\end{equation*}
where we used \eqref{s-lgeq1} and  $R_2 + M^\mu \leq M^{1/3}$.

Next, consider the case where $\Sigma$ does not saturate the 2/3-rule \eqref{2/3_rule_sketch}.
In this case it may well be that $s_d = 0$. However, if \eqref{2/3_rule_sketch} is not saturated, then there must exist a $q \in Q(\Sigma)$ satisfying $\abs{q} \geq 4$. Thus \eqref{2/3_rule_sketch} improves to
\begin{equation*}
\ell \;\leq\; \frac{1}{3} + \frac{2 s}{3}\,.
\end{equation*}
Thus we find that
\begin{equation*}
\abs{\cal V'(\Sigma)} \;\lesssim\; \frac{N}{M} M^{1/3 - s/3} R_2^{s_{l}} M^{\mu s_{t}}\,.
\end{equation*}
Note that we have $s_t + s_l \leq s$ and $s_t \leq \ell - 1$.
 Using $R_2 \leq M^\mu$ we therefore get
\begin{equation*}
\abs{\cal V'(\Sigma)} \;\lesssim\; \frac{N}{M} M^{1/3 - s/3} R_2^{s_{l}} M^{\mu s_{t}} \;\leq\; \frac{N}{M} M^{1/3 - s/3} R_2^{s - \ell + 1} M^{\mu (\ell - 1)} \;=\; \frac{N}{M} M^{1/3} (M^{-1/3} R_2)^{s - \ell + 1} (M^{-1/3} M^\mu)^{\ell - 1}\,.
\end{equation*}
From \eqref{s-lgeq1} we therefore get 
$\abs{\cal V'(\Sigma)} \ll \frac{N}{M} R_2$. This concludes the sketch of the proof of Proposition \ref{prop:small_Sigma}.

\subsection{Proof of Proposition \ref{prop:small_Sigma}} \label{proof:small_Sigma}

We begin the proof by rewriting \eqref{def of V Sigma} in a form where the oscillations in the summation over $\f b$ may be effectively exploited. This consists of three steps, each of which results in negligible errors of order $C_q N M^{-q}$ for any $q > 0$.
 In the first step, we decouple the $\f b$-summations by replacing the indicator
 function $\indb{2 \sum_{\sigma \in \Sigma} b_\sigma \leq M^\mu}$ with the product 
$\prod_{\sigma \in \Sigma} \ind{b_\sigma \leq M^\mu}$, using the estimate \eqref{bound on gamma}.
 In the second step, we replace the factors $\wt \gamma_{n_i(\Sigma, \f b)}(E_1, \phi_i)$ 
with $(\gamma_{n_i(\Sigma, \f b)} * \psi_i^\eta)(E_i)$, using the estimate \eqref{gamma - g}.
These two steps are analogous to the steps from \eqref{VDdef} to
 \eqref{common expression for V(D)}.

In the third step, we truncate in the tails of the functions $\psi_i$ on the scale $M^{\delta/2}$, where $\delta > 0$ is the constant from Proposition \ref{prop: expansion with trunction}. To that end, we choose a smooth, nonnegative, symmetric function $\chi$ satisfying $\chi(E) = 1$ for $\abs{E} \leq 1$ and $\chi(E) = 0$ for $\abs{E} \geq 2$. We split $\psi_i = \psi^{\leq}_i + \psi^{>}_i$, where
\begin{equation} \label{phi_cutoff1}
\psi^{\leq}_i(E) \;\deq\; \psi_i(E) \chi(M^{-\delta/2} E)\,, \qquad
\psi^{>}_i(E) \;\deq\; \psi_i(E) \pb{1 - \chi(M^{-\delta/2} E)}\,
\end{equation}
This yields the splitting $\psi^\eta_i = \psi^{\leq, \eta}_i + \psi^{>, \eta}_i$ of the rescaled test function $\psi^\eta(E) = \eta^{-1}\psi(\eta^{-1}E)$. This splitting is done on the scale $\eta M^{\delta/2}$, and we have
\begin{equation} \label{supp_phi_trunc}
\supp \psi_i^{\leq, \eta} \;\subset\; [-2 \eta M^{\delta/2}, 2 \eta M^{\delta/2}]\,.
\end{equation}
Moreover, recalling \eqref{non_Cauchy} and using the trivial bound $\abs{\gamma_n(E)} \leq C$ we find
\begin{equation} \label{phi_geq_bound}
\absb{(\psi_i^{>,\eta} * \gamma_{n})(E_i)} \;\leq\; C_q M^{-q}
\end{equation}
for any $q > 0$.
The truncation of the third step is the replacement of $(\gamma_{n_i(\Sigma, \f b)} * \psi_i^{\eta})(E_i)$ with $(\gamma_{n_i(\Sigma, \f b)} * \psi_i^{\leq, \eta})(E_i)$, using \eqref{phi_geq_bound}.

Applying these three steps to the definition \eqref{def of V Sigma} yields
\begin{multline} \label{V_cutoff}
\cal V(\Sigma) \;=\; \sum_{\f b \in B(\Sigma)} \pBB{\prod_{\sigma \in \Sigma} \ind{b_\sigma \leq M^\mu}} \, 2 \re \pb{\gamma_{n_1(\Sigma, \f b)} * \psi_1^{\leq, \eta}}(E_1)
\, 2 \re \pb{\gamma_{n_2(\Sigma, \f b)} * \psi_2^{\leq, \eta}}(E_2)
\\
\times \sum_{\f y \in \bb T^{Q(\Sigma)}} \sum_{\f x \in \bb T^{V(\Sigma)}} \pBB{\prod_{q \in Q(\Sigma)} \prod_{i \in q} \ind{x_i = y_q}} \pBB{\prod_{\{e, e'\} \in \Sigma} (S^{b_{\{e,e'\}}})_{x_e}} + O_{q,\Sigma}(N M^{-q})\,.
\end{multline}
The errors arising from each of the three steps are estimated using \eqref{bound on gamma}, \eqref{gamma - g}, and \eqref{phi_geq_bound} respectively. The summations over $\f b$ and $\f y$ in the error terms are performed brutally, exactly as in the proof of Proposition \ref{prop:large_Sigma} (in fact here we only need that $\cal Y(\Sigma)$ be connected); we omit the details.

Next, we use \eqref{2re2re} to write $\cal V(\Sigma) =  2 \re  (\cal V'(\Sigma) + \cal V''(\Sigma)) + O_{q,\Sigma}(N M^{-q})$,  where
\begin{multline} \label{V_cutoff2}
\cal V'(\Sigma) \;\deq\; \sum_{\f b \in B(\Sigma)} \pBB{\prod_{\sigma \in \Sigma} \ind{b_\sigma \leq M^\mu}} \, \pb{\gamma_{n_1(\Sigma, \f b)} * \psi_1^{\leq, \eta}}(E_1)
\, \pb{\ol{\gamma_{n_2(\Sigma, \f b)}} * \psi_2^{\leq, \eta}}(E_2)
\\
\times \sum_{\f y \in \bb T^{Q(\Sigma)}} \sum_{\f x \in \bb T^{V(\Sigma)}} \pBB{\prod_{q \in Q(\Sigma)} \prod_{i \in q} \ind{x_i = y_q}} \pBB{\prod_{\{e, e'\} \in \Sigma} (S^{b_{\{e,e'\}}})_{x_e}}\,,
\end{multline}
and $\cal V''(\Sigma)$ is defined similarly but without the complex conjugation on $\gamma_{n_2(\Sigma, \f b)}$.
 We shall gives the details of the estimate for the larger error term, $\cal V'(\Sigma)$. The term $\cal V''(\Sigma)$ may be estimated using an almost identical argument; we sketch the minor differences below.

In order to estimate the right-hand side of \eqref{V_cutoff}, we shall have to classify the bridges of $\Sigma$ into three classes according to the following definition.

\begin{definition} \label{def_split_Sigma}
For $i = 1,2$ we define
\begin{equation*}
\Sigma_i \;\deq\; \hb{\sigma \in \Sigma \col \sigma \subset E(\cal C_i)}\,,
\end{equation*}
the set of bridges consisting only of edges of $\cal C_i$. We abbreviate $\Sigma_d \deq \Sigma_1 \cup \Sigma_2$ (the set of ``domestic bridges''). We also define $\Sigma_c \deq \Sigma \setminus \Sigma_d$, the set of bridges connecting the two components of $\cal C$. Moreover, for $\# = 1,2,c,d$ we introduce the set $E_\#(\cal Y(\Sigma))$ defined as the subset of $E(\cal Y(\Sigma))$ encoded by $\Sigma_\#$ under the canonical identification $\Sigma \simeq E(\cal Y(\Sigma))$, according to Definition \ref{def:Z_Sigma}.
\end{definition}
Since each $\sigma \in \Sigma_c$ contains one edge of $\cal C_1$ and one edge of $\cal C_2$, and each $\sigma \in \Sigma_i$ contains two edges of $\cal C_i$, we find that the number of edges in the $i$-th chain $\cal C_i(n_i)$ of the graph
$\cal C(n_1, n_2)$ with pairing
 $\Pi = (\Sigma, \f b)$ is
\begin{equation*}
n_i(\Sigma, \f b) \;=\; \sum_{\sigma \in \Sigma_c} b_\sigma + 2 \sum_{\sigma \in \Sigma_i} b_\sigma\,.
\end{equation*}
Here we identify $\Pi$ with $(\Sigma, \f b)$, as remarked after Lemma \ref{lem: skeletons}.

We may now plug into \eqref{V_cutoff2} the explicit expression for $\gamma_n$ from \eqref{claim about gamma n}, at which point it is convenient to introduce the abbreviations
\begin{equation}
T(E) \;\deq\; \frac{2}{1 - (M - 1)^{-1} \ee^{2 \ii \arcsin E}}\,, \qquad A_i \;\deq\; \arcsin E_i\,.
\end{equation}
Thus we get from \eqref{V_cutoff2} 
\begin{multline*}
\cal V'(\Sigma) \;=\; \sum_{\f b \in B(\Sigma)} \pBB{\prod_{\sigma \in \Sigma} \ind{b_\sigma \leq M^\mu}}
\\ 
\times
\pBB{T(E_1) \ol{T(E_2)} \, \ee^{\ii (A_1 - A_2)} \, \prod_{\sigma \in \Sigma_1} (-\ee^{2 \ii A_1})^{b_\sigma} \prod_{\sigma \in \Sigma_2} (-\ee^{-2 \ii A_2})^{b_\sigma} \prod_{\sigma \in \Sigma_c} \ee^{\ii (A_1 - A_2) b_\sigma}} * \psi_1^{\leq, \eta}(E_1) * \psi_2^{\leq, \eta}(E_2)
\\
\times \sum_{\f y \in \bb T^{Q(\Sigma)}} \sum_{\f x \in \bb T^{V(\Sigma)}} \pBB{\prod_{q \in Q(\Sigma)} \prod_{i \in q} \ind{x_i = y_q}} \pBB{\prod_{\{e, e'\} \in \Sigma} (S^{b_{\{e,e'\}}})_{x_e}}\,.
\end{multline*}
 Here, by a slight abuse of notation, we write $(\varphi * \chi)(E) \equiv \varphi(E) * \chi(E)$. 
Using the associated graph $\cal Y(\Sigma)$ from Definition \ref{def:Z_Sigma}, we may rewrite this as
\begin{multline} \label{V_cutoff3}
\cal V'(\Sigma) \;=\; \sum_{\f b \in \{1, \dots, [M^\mu]\}^{E(\cal Y(\Sigma))}} 
 \ind{\f b \in B(\Sigma)} 
\\ 
\times
\pBB{T(E_1) \ol{T(E_2)} \, \ee^{\ii (A_1 - A_2)} \, \prod_{e \in E_1(\cal Y(\Sigma))} (-\ee^{2 \ii A_1})^{b_e} \prod_{e \in E_2(\cal Y(\Sigma))} (-\ee^{-2 \ii A_2})^{b_e} \prod_{e \in E_c(\cal Y(\Sigma))} \ee^{\ii (A_1 - A_2) b_e}}
\\
* \psi_1^{\leq, \eta}(E_1) * \psi_2^{\leq, \eta}(E_2)
\, \sum_{\f y \in \bb T^{V(\cal Y(\Sigma))}} \pBB{\prod_{e \in E(\cal Y(\Sigma))} (S^{b_e})_{y_e}}\,,
\end{multline}
where we used the canonical identification between $\Sigma$ and $E(\cal Y(\Sigma))$ to rewrite the set $B(\Sigma) \subset \N^\Sigma$ from Lemma \ref{lem: skeletons} as a subset $B(\Sigma) \subset \N^{E(\cal Y(\Sigma))}$ (by a slight abuse of notation). Also, to avoid confusion, we emphasize that the expressions $E_i$ and $E_i(\cal Y(\Sigma))$ have nothing to do with each other.

Next, we split $\cal V'(\Sigma) = \cal V'_0(\Sigma) - \cal V'_1(\Sigma)$ using the splitting  $\ind{\f b \in B(\Sigma)} = 1 - \ind{\f b \notin B(\Sigma)}$ in \eqref{V_cutoff3}.
We first focus on main term, $\cal V_0'(\Sigma)$. In the definition of $\cal V'_0(\Sigma)$, we may sum the geometric series associated with each summation variable $b_e$ to get
\begin{multline} \label{V_cutoff4}
\cal V'_0(\Sigma) \;=\; \sum_{\f y \in \bb T^{V(\cal Y(\Sigma))}}
\Biggl(T(E_1) \ol{T(E_2)} \, \ee^{\ii (A_1 - A_2)} \, \prod_{e \in E_1(\cal Y(\Sigma))} Z(-\ee^{2 \ii A_1} S)_{y_e} \prod_{e \in E_2(\cal Y(\Sigma))} Z(-\ee^{-2 \ii A_2} S)_{y_e}
\\
\times \prod_{e \in E_c(\cal Y(\Sigma))} Z(\ee^{\ii (A_1 - A_2)} S)_{y_e} \Biggr)
* \psi_1^{\leq, \eta}(E_1) * \psi_2^{\leq, \eta}(E_2)\,,
\end{multline}
where we abbreviated
\begin{equation} \label{def_Z}
Z(x) \;\deq\; \sum_{b = 1}^{[M^\mu]} x^b \;=\; \frac{x (1 - x^{[M^\mu]})}{1 - x}\,
\end{equation}
for any quantity $x$, which may be a number or a matrix.
The explicit summation over $\f b$ exploits the cancellations associated with the highly oscillating summands. From now on, we shall freely estimate the summation over $\f y$ by taking the absolute value inside the sum.

On the right-hand side of \eqref{V_cutoff4}, each edge $e \in E(\cal Y(\Sigma))$ encodes a symmetric matrix of the form $Z(\alpha S)$, where $\abs{\alpha} = 1$. In order to estimate the right-hand side of \eqref{V_cutoff4}, we therefore need appropriate resolvent bounds on the entries of $Z(\alpha S)$. To that end, we improve the second bound of \eqref{S_basic_bounds} using the following
local decay bound. Recall the definition of $\cal I$ from \eqref{def_I}. 
\begin{lemma} \label{lem: lct}
For all $b \in \N$ we have
\begin{equation*}
(\cal I^{-1} S^b)_{yz} \;\leq\; \frac{C}{M b^{d/2}} + \frac{C}{N}
\end{equation*}
for some constant $C$ depending only on $f$.
\end{lemma}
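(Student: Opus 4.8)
The goal is to prove the local decay estimate $(\mathcal I^{-1}S^b)_{yz} \leq C/(Mb^{d/2}) + C/N$, which is a discrete heat-kernel bound for the random walk with step distribution proportional to the indicator of the $W$-ball. The plan is to pass to Fourier space on the torus $\mathbb T_L^d$. Writing $\widehat S(p) = \sum_x \mathrm e^{-\mathrm i p\cdot x}S_{x0}$ for $p$ in the dual lattice $\frac{2\pi}{L}\mathbb T_L^d$, we have $\widehat S(p) = \mathcal I\,\widehat g_W(p)$ where $\widehat g_W(p) = M^{-1}\sum_{x}\mathrm e^{-\mathrm i p\cdot x}f(x/W)$ is a normalized characteristic function, $\widehat g_W(0)=1$, and $(\mathcal I^{-1}S^b)_{yz} = L^{-d}\sum_p \mathrm e^{\mathrm i p\cdot(y-z)}\widehat g_W(p)^b$. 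Since $|\widehat g_W(p)|\leq 1$, the $p=0$ mode contributes exactly $L^{-d}=1/N$, which accounts for the $C/N$ term; the content of the lemma is that the remaining modes contribute at most $C/(Mb^{d/2})$.

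For the nonzero modes, the key is a quantitative bound on $|\widehat g_W(p)|$. First I would establish a spectral gap away from $p\approx 0$: because $f$ is bounded, nonnegative, integrable with integrable gradient, one shows $|\widehat g_W(p)|\leq 1-c$ for $|p|W\gtrsim 1$ (uniformly, for $W$ large), using a Riemann-sum comparison of $\widehat g_W(p)$ with $\int \mathrm e^{-\mathrm i p\cdot x}f(x)\,\mathrm dx$ together with the fact that the continuum Fourier transform of $f$ decays and never returns to modulus $1$; the correction to the Riemann sum is $O(W^{-1})$ by the piecewise-$C^1$ hypothesis on $f$. These modes then contribute at most $L^{-d}\cdot(\text{number of such }p)\cdot(1-c)^b \leq C(1-c)^b$, which is exponentially small in $b$ and hence dominated by $1/(Mb^{d/2})$ provided $b$ is not too large; for $b$ large this is even easier. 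The delicate regime is small momenta $|p|W \lesssim 1$. There I would use the expansion $\widehat g_W(p) = 1 - p\cdot D_0 p + O(|pW|^{2+c}/W^{?})$—more precisely, using the fourth-moment hypothesis \eqref{moment of f}, one gets $\widehat g_W(p) = 1 - \frac{1}{M}\sum_x (p\cdot x)^2 f(x/W)/2 + \text{error}$, and with the quadratic form bound \eqref{D_bounded} this yields $|\widehat g_W(p)| \leq \mathrm e^{-c|pW|^2}$ for $|pW|$ small. Hence $|\widehat g_W(p)|^b \leq \mathrm e^{-cb|pW|^2}$, and summing $L^{-d}\sum_{p:\,|pW|\lesssim 1} \mathrm e^{-cb|pW|^2}$ over the lattice of spacing $2\pi/L$, a standard Gaussian sum/integral comparison gives $\lesssim W^{-d}(b)^{-d/2} \asymp M^{-1}b^{-d/2}$, which is exactly the claimed main term.

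Assembling the three momentum ranges—$p=0$ giving $1/N$, small $p$ giving $C/(Mb^{d/2})$, and bulk $p$ giving exponentially small contributions absorbed into $C/(Mb^{d/2})$—completes the proof. One technical point to handle carefully: for very small $b$ (say $b=O(1)$) the bound $1/(Mb^{d/2})\asymp 1/M$ must still dominate, which follows from the cruder estimate $(S^b)_{yz}\leq C/M$ already recorded in \eqref{S_basic_bounds}, so one may assume $b\geq b_0$ for any fixed $b_0$ at the cost of adjusting constants. The main obstacle is the small-momentum analysis: one must show that the error in the quadratic Taylor expansion of $\widehat g_W(p)$ is genuinely controlled—uniformly in $W$ after the natural rescaling $q = pW$—by the fourth moment of $f$, so that the Gaussian upper bound $|\widehat g_W(p)|\leq \mathrm e^{-c|q|^2}$ holds on the full small-$q$ window and not merely in an infinitesimal neighborhood of $q=0$; here the piecewise-$C^1$ and moment hypotheses on $f$, rather than smoothness, are what one has to exploit. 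The rest is routine lattice-versus-integral comparison.
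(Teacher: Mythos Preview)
Your proposal is correct and is precisely the standard Fourier-analytic proof of the local central limit theorem that the paper invokes by citation; the paper gives no details beyond referring to \cite[Section 3]{So1}, and you have supplied them. One small remark: your identity $(\cal I^{-1}S^b)_{yz}=L^{-d}\sum_p \ee^{\ii p\cdot(y-z)}\widehat g_W(p)^b$ actually computes $((\cal I^{-1}S)^b)_{yz}=\cal I^{-b}(S^b)_{yz}$ rather than $\cal I^{-1}(S^b)_{yz}$; the two differ by a factor $\cal I^{b-1}=(1+1/(M-1))^{b-1}$, which is $1+o(1)$ in the regime $b\leq M^\mu$ where the lemma is applied (see \eqref{Z_no_osc}), so the distinction is harmless in practice, but you may want to note it explicitly.
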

\begin{proof}
This follows from a standard local central limit theorem; see for instance the proof in \cite[Section 3]{So1}. 
\end{proof}
In particular, for $1 \leq b \leq (L / W)^2$ we have
\begin{equation} \label{lclt}
(S^b)_{yz} \;\leq\; \frac{C}{M b^{d/2}}\,.
\end{equation}
Recalling \eqref{LW_assump} and \eqref{def_R2},  we find from \eqref{lclt} that for $\abs{\alpha} \leq 1$ we have
\begin{equation} \label{Z_no_osc}
\abs{Z(\alpha S)_{yz}} \;\leq\; \frac{C}{M} R_2(M^{-\mu})\,.
\end{equation}

The bound \eqref{Z_no_osc} is sharp if $\alpha = 1$, i.e.\ if the sum in \eqref{def_Z} is not oscillating. If oscillations are present, we get better bounds which we record in the following lemma. It is a special case of  \cite[Proposition  3.5]{EK4}.
\begin{lemma} \label{lem: bounds on S}
Let $S$ be as in \eqref{general S} and $\alpha \in \C$ satisfy $\abs{\alpha} \leq 1$ and $\abs{1 - \alpha} \geq 4 / M + (W/L)^2$.
There exists a constant $C > 0$, depending only on $d$ and the profile function $f$, such that
\begin{equation} \label{bound res 1}
\normbb{\frac{1}{1 - \alpha S}}_{\ell^\infty \to \ell^\infty} \;\leq\; \frac{C \log N}{2 - \abs{1 + \alpha}}\,.
\end{equation}
Under the same assumptions we have
\begin{equation} \label{bound res 2}
\sup_{x,y} \absbb{\pbb{\frac{S}{1 - \alpha S}}_{xy}} \;\leq\; \frac{C}{M} R_2(\abs{1 - \alpha})\,,
\end{equation}
where the constant $C$ depends only on $d$ and $f$.
\end{lemma}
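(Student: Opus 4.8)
The plan is to exploit the translation invariance of $S$ and run the whole estimate on the Fourier side. First I would diagonalise $S$ by the characters $e_q(x) = N^{-1/2} \ee^{2 \pi \ii q \cdot x / L}$, $q \in \bb T$, with (real, since $f$ is even) eigenvalues $\lambda_q = \sum_x \ee^{-2 \pi \ii q \cdot x / L} S_{x0}$, and record the spectral picture of $S$: a simple top eigenvalue $\lambda_0 = \cal I = M/(M-1)$ with constant eigenvector, $\abs{\lambda_q} \leq 1 - c$ when $\abs{q}/L$ is macroscopic, and -- by Taylor-expanding $\lambda_q$ at $q = 0$, using \eqref{moment of f} to control the remainder and \eqref{D_bounded} for nondegeneracy of the covariance $D$ from \eqref{definition of D} -- a quadratic lower bound $1 - \lambda_q \asymp \min\{1, (W \abs{q}/L)^2\}$ for $q \neq 0$. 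Feeding in the hypotheses $\abs{\alpha} \leq 1$ and $\abs{1 - \alpha} \geq 4/M + (W/L)^2$, I would extract the two inequalities that drive everything: the elementary chain $2 - \abs{1 + \alpha} \leq 1 - \re \alpha \leq \abs{1 - \alpha} \leq C \abs{1 - \alpha \cal I}$, and a uniform lower bound $\abs{1 - \alpha \lambda_q} \gtrsim \abs{1 - \alpha} + (W \abs{q}/L)^2$ for the subcritical $q \neq 0$ (in particular $1 \notin \alpha \cdot \operatorname{spec}(S)$, so the resolvents are well defined).

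For \eqref{bound res 2} I would split off the rank-one top part, writing $\tfrac{S}{1 - \alpha S} = \tfrac{\cal I}{1 - \alpha \cal I} P + (1 - P) \tfrac{S}{1 - \alpha \wt S}$, with $P_{xy} = 1/N$ and $\wt S = S(1 - P)$. The first summand has entries of size $\leq \cal I / (N \abs{1 - \alpha \cal I}) \leq C / (N \abs{1 - \alpha}) \leq C/M$, using $N \geq M$ and $\abs{1 - \alpha} \geq 4/M$. The second I would bound in the eigenbasis by $\tfrac1N \sum_{q \neq 0} \tfrac{\abs{\lambda_q}}{\abs{1 - \alpha \lambda_q}}$, compare this sum with the integral $\tfrac{C}{M} \int_{\R^d} \tfrac{\dd p}{\abs{1 - \alpha} + p \cdot D_0 p}$ (a Riemann-sum step of the kind in Section \ref{sec: heuristic leading term}), and evaluate the integral: bounded for $d \geq 3$, $\asymp \abs{\log \abs{1 - \alpha}}$ for $d = 2$, $\asymp \abs{1 - \alpha}^{-1/2}$ for $d = 1$ -- which is precisely the definition \eqref{def_R2} of $R_2$. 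Equivalently, in the range where $\abs{\alpha \lambda_q} < 1$ for all $q \neq 0$ one may expand $\tfrac{S}{1 - \alpha \wt S} = \sum_{b \geq 1} \alpha^{b-1} \wt S^b$ and apply Lemma \ref{lem: lct} termwise, summing $\sum_{b \geq 1} r^b / (M b^{d/2}) \asymp M^{-1} R_2(1 - r)$ with $r = \abs{\alpha} \cal I$.

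The bound \eqref{bound res 1} is the main obstacle, and is where the naive route fails: with the same splitting the top part contributes $\abs{1 - \alpha \cal I}^{-1} \norm{P}_{\ell^\infty \to \ell^\infty} \leq C (2 - \abs{1 + \alpha})^{-1}$ by Step 1, but for the bulk resolvent $(1 - P)(1 - \alpha \wt S)^{-1}$ a termwise Neumann-series estimate diverges, since $\sum_z (S^b)_{0z} = \cal I^b$, so the oscillatory cancellation in $b$ must be kept. My plan is to use that this bulk kernel is translation invariant, write it as $g(x - y)$ with $\wh g(q) = (1 - \alpha \lambda_q)^{-1} \ind{q \neq 0}$, and estimate $\norm{g}_{\ell^1(\bb T)}$ -- which is exactly the $\ell^\infty \to \ell^\infty$ norm -- by a weighted Cauchy--Schwarz inequality $\norm{g}_{\ell^1} \leq (\sum_z (1 + \abs{z})^{-2 s})^{1/2} (\sum_z (1 + \abs{z})^{2 s} \abs{g(z)}^2)^{1/2}$ with $s$ slightly above $d/2$: the first factor is $O(1)$, and the second is (up to constants) a Sobolev norm of the symbol $q \mapsto (1 - \alpha \lambda_q)^{-1}$, controlled from the smoothness of $\lambda_q$ together with the lower bound $\abs{1 - \alpha \lambda_q} \gtrsim \abs{1 - \alpha} + (W \abs{q}/L)^2$, with the factor $\log N$ entering from the borderline exponent $s = d/2$.

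I expect the genuinely delicate point to be the bookkeeping inside this last step. One must separate the macroscopic frequencies $\abs{\lambda_q} \leq 1 - c$, where every factor is bounded and the contribution is $O(1)$, from the near-critical low frequencies; and -- in the regime $L \gg W^{1 + d/2}$, which \eqref{LW_assump} permits -- one must additionally peel off the finitely many ``supercritical'' modes with $\lambda_q \geq 1$, whose spectral projection has rank $O\pb{(L / W^{1 + d/2})^d}$ and whose contribution is handled directly from this rank bound and $\abs{1 - \alpha \lambda_q} \gtrsim 1/M$. Tracking the sharp power of $\log N$ and disposing of the supercritical modes is the part that needs care; this is exactly the content of \cite[Proposition 3.5]{EK4}, of which the present lemma is the special case corresponding to the translation-invariant $S$ of \eqref{general S}.
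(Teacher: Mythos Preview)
The paper does not prove this lemma at all; it merely records it as a special case of \cite[Proposition 3.5]{EK4} and moves on. So there is no in-paper argument to compare against, and you correctly note this yourself at the end. Your Fourier-diagonalisation sketch -- spectral picture of $S$, rank-one splitting for \eqref{bound res 2}, and a Sobolev/weighted Cauchy--Schwarz bound on the convolution kernel for \eqref{bound res 1} -- is a reasonable outline of the kind of argument one expects in the companion paper.

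One small slip worth flagging: your bound on the rank-one piece in \eqref{bound res 2} does not follow from the reasons you give. From $N \geq M$ and $\abs{1-\alpha} \geq 4/M$ you only get $N\abs{1-\alpha} \geq 4$, hence $\cal I/(N\abs{1-\alpha\cal I}) = O(1)$, not $O(1/M)$. To absorb this term into the target $C M^{-1} R_2(\abs{1-\alpha})$ you actually need the second half of the hypothesis, $\abs{1-\alpha} \geq (W/L)^2$, together with $N=L^d$, $M\asymp W^d$, and the dimension-dependent size of $R_2$; a short case check in $d=1,2,\geq 3$ then shows the rank-one piece is dominated. This is a bookkeeping oversight, not a structural flaw in your plan.
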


From \eqref{S_basic_bounds}, \eqref{Z_no_osc}, and Lemma \ref{lem: bounds on S} we get
\begin{equation} \label{edge_bounds}
\abs{Z(\alpha S)_{yz}} \;\leq\; \frac{C}{M} \min \hb{R_2(\abs{1 - \alpha}) \,,\, R_2(M^{-\mu})} \,, \qquad
\sum_{z} \abs{Z(\alpha S)_{yz}} \;\leq\; C \min \hBB{\frac{\log N}{2 - \abs{1 + \alpha}} \,,\, M^\mu}\,,
\end{equation}
We apply \eqref{edge_bounds}  to estimating \eqref{V_cutoff4} via the following key estimate. 

\begin{lemma}
Let $v = (v_1, v_2)$ and denote by $A_{i, v} \deq \arcsin(E_i - v_i)$ the value of $A_i$ in the convolution integral \eqref{V_cutoff4}. For small enough $\delta > 0$ and $\abs{v_1}, \abs{v_2} \leq 2 \eta M^{\delta/2}$ (i.e.\ $v_1$ and $v_2$ in the support of the convolution integral \eqref{V_cutoff4}) we have \begin{equation} \label{domestic_bound}
\absb{Z(-\ee^{\pm 2 \ii A_{i,v}} S)_{yz}} \;\leq\; \frac{C}{M}\,, \qquad
\sum_{z} \absb{Z(-\ee^{\pm 2 \ii A_{i,v}} S)_{yz}} \;\leq\; C \log N
\end{equation}
and
\begin{equation} \label{connecting_bound}
\absb{Z(\ee^{\ii (A_{1,v} - A_{2,v})} S)_{yz}} \;\leq\; \frac{C}{M} M^{2 \delta} R_2(\omega + \eta)\,, \qquad
\sum_z \absb{Z(\ee^{\ii (A_{1,v} - A_{2,v})} S)_{yz}} \;\leq\; C M^\mu\,.
\end{equation}
\end{lemma}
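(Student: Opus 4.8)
The plan is to apply Lemma~\ref{lem: bounds on S} to the two relevant choices of the phase $\alpha$ and to use the elementary estimates \eqref{edge_bounds} (equivalently \eqref{bound res 1}--\eqref{bound res 2}, \eqref{S_basic_bounds}, \eqref{Z_no_osc}), after checking that the required separation of $\alpha$ from $1$ holds in each case. First I would fix $v_1,v_2$ with $\abs{v_i}\le 2\eta M^{\delta/2}$ and set $A_{i,v}=\arcsin(E_i-v_i)$. Since $E_i\in[-1+\kappa,1-\kappa]$ and $\eta M^{\delta/2}\le M^{-\rho+\delta/2}$ is tiny, $E_i-v_i$ stays in $[-1+\kappa/2,1-\kappa/2]$, so $A_{i,v}$ is a genuine (real) angle bounded away from $\pm\pi/2$; in particular $\re(\ee^{2\ii A_{i,v}})$ and hence $\sin(2A_{i,v})$ need not vanish, but what matters is that $A_{i,v}\in[-\pi/2+c,\pi/2-c]$ for some $c=c(\kappa)>0$. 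This is the only place the assumption \eqref{D leq kappa} enters.

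For the \emph{domestic} bound \eqref{domestic_bound}, take $\alpha=-\ee^{\pm 2\ii A_{i,v}}$, so $\abs\alpha=1$. Because $A_{i,v}$ is bounded away from $\pm\pi/2$, we have $2-\abs{1+\alpha}=2-\absb{1-\ee^{\pm2\ii A_{i,v}}}\ge c'>0$, uniformly in $v$, and likewise $\abs{1-\alpha}=\absb{1+\ee^{\pm2\ii A_{i,v}}}\ge c'>0$ (here we use $2A_{i,v}$ bounded away from $\pm\pi$, which is the same condition). Thus the hypotheses of Lemma~\ref{lem: bounds on S} are met, and \eqref{bound res 2} gives $\absb{(S(1-\alpha S)^{-1})_{yz}}\le (C/M)R_2(\abs{1-\alpha})\le C/M$ since $R_2$ is bounded on $[c',2]$; summing the geometric series as in \eqref{def_Z} (which only changes things by a bounded factor, since $\abs\alpha\le 1$ and the tail $x^{[M^\mu]}$ is harmless) yields the first inequality of \eqref{domestic_bound}. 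Similarly \eqref{bound res 1} gives $\norma{(1-\alpha S)^{-1}}_{\ell^\infty\to\ell^\infty}\le C\log N/(2-\abs{1+\alpha})\le C\log N$, hence $\sum_z\abs{Z(\alpha S)_{yz}}\le C\log N$, the second inequality of \eqref{domestic_bound}.

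For the \emph{connecting} bound \eqref{connecting_bound}, take $\alpha=\ee^{\ii(A_{1,v}-A_{2,v})}$. Now $\abs{1-\alpha}$ is \emph{not} bounded below; instead $\abs{1-\alpha}\asymp\abs{A_{1,v}-A_{2,v}}$, and since $A_i$ is Lipschitz on the relevant interval and $E_2-E_1=\omega$, $\abs{v_i}\le 2\eta M^{\delta/2}$, we get $\abs{A_{1,v}-A_{2,v}}\le C(\omega+\eta M^{\delta/2})$, and also $\abs{1-\alpha}\ge 4/M+(W/L)^2$ will hold in the regime under consideration (using \eqref{LW_assump} and $\eta\gg M^{-1/3}$ — if it fails one falls back on \eqref{Z_no_osc}). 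Applying \eqref{bound res 2} gives $\absb{(S(1-\alpha S)^{-1})_{yz}}\le (C/M)R_2(\abs{1-\alpha})$; here the monotonicity of $R_2$ near $0$ and the bound $\abs{1-\alpha}\le C(\omega+\eta M^{\delta/2})\le CM^{2\delta}(\omega+\eta)$ (crudely absorbing $M^{\delta/2}$ into $M^{2\delta}$, and using $R_2(cs)\le C R_2(s)$ for $s$ small and $R_2(M^{2\delta}s)\le M^{2\delta}R_2(s)$ by the explicit form \eqref{def_R2}) yields $\absb{Z(\ee^{\ii(A_{1,v}-A_{2,v})}S)_{yz}}\le (C/M)M^{2\delta}R_2(\omega+\eta)$, the first inequality. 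For the row sum, here $\alpha$ may be close to $1$ so \eqref{bound res 1} is not useful and \eqref{edge_bounds} forces us onto the trivial bound: $\sum_z\abs{Z(\alpha S)_{yz}}\le\sum_{b=1}^{[M^\mu]}\sum_z (S^b)_{yz}\le\sum_{b=1}^{[M^\mu]}\cal I^b\le CM^\mu$ by \eqref{S_basic_bounds}, the second inequality of \eqref{connecting_bound}.

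\textbf{Main obstacle.} The delicate point is the connecting case: one must track how the convolution variables $v_i$ enter the phase $A_{1,v}-A_{2,v}$ and verify that the extra spread $\eta M^{\delta/2}$ from the truncated test-function support is genuinely negligible compared to $\omega+\eta$, i.e.\ that it only costs an $M^{O(\delta)}$ factor and does not change the power of $\omega$ in $R_2$. For $d=1$ this means controlling $(\omega+\eta M^{\delta/2})^{-1/2}$ against $M^{2\delta}(\omega+\eta)^{-1/2}$, which is where the bookkeeping of $\delta$ matters; everything else reduces to monotonicity and scaling properties of the explicit function $R_2$ and to the arcsine Lipschitz bound. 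The domestic case, by contrast, is essentially immediate once the separation $2-\abs{1+\alpha}\ge c>0$ is recorded.
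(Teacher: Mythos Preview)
Your treatment of the domestic bound \eqref{domestic_bound} is correct and matches the paper: once $A_{i,v}$ is bounded away from $\pm\pi/2$, both $2-\abs{1+\alpha_i}$ and $\abs{1-\alpha_i}$ are bounded below by a positive constant, and \eqref{edge_bounds} gives the result directly.

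The connecting bound, however, has a genuine gap. You bound $\abs{1-\alpha}$ from \emph{above} by $C(\omega+\eta M^{\delta/2})\le CM^{2\delta}(\omega+\eta)$ and then invoke monotonicity of $R_2$ to conclude $R_2(\abs{1-\alpha})\le M^{2\delta}R_2(\omega+\eta)$. But $R_2$ is \emph{decreasing}, so an upper bound on $\abs{1-\alpha}$ yields only a \emph{lower} bound on $R_2(\abs{1-\alpha})$ --- the wrong direction. What you would need is a lower bound $\abs{1-\alpha}\ge cM^{-C\delta}(\omega+\eta)$, and no such bound is available: when $\omega\lesssim\eta M^{\delta/2}$ the convolution variables $v_1,v_2$ can make $E_1-v_1=E_2-v_2$, hence $A_{1,v}=A_{2,v}$ and $\abs{1-\alpha}=0$. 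Your parenthetical remark about falling back on \eqref{Z_no_osc} is the right instinct but is not carried out.

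The paper resolves this by a case split. If $\eta\le M^{-\delta}\omega$, then $\abs{v_i}\le 2\eta M^{\delta/2}\le 2M^{-\delta/2}\omega\ll\omega$, so $\abs{1-\alpha}\asymp\omega$ genuinely holds and the first branch of \eqref{edge_bounds} gives $(C/M)R_2(\omega)\le(C/M)R_2(\omega+\eta)$. If instead $\omega<M^{\delta}\eta$, one abandons any lower bound on $\abs{1-\alpha}$ and uses the non-oscillating estimate $\abs{Z(\alpha S)_{yz}}\le(C/M)R_2(M^{-\mu})$ from \eqref{Z_no_osc}; the point is then the arithmetic
\[
R_2(M^{-\mu})\;\le\;\bigl((\omega+\eta)M^{\mu}\bigr)^{1/2}R_2(\omega+\eta)\;\le\;M^{(\delta+\mu-\rho)/2}R_2(\omega+\eta)\;\le\;M^{2\delta}R_2(\omega+\eta)\,,
\]
where the first step uses $(\omega+\eta)M^\mu\ge\eta M^\mu\ge1$ together with the explicit form of $R_2$, and the last uses the calibration $\mu-\rho<3\delta$ from Proposition~\ref{prop: expansion with trunction}. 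This is where the choice of $\delta$ actually enters, and it is the missing ingredient in your argument.
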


\begin{proof}
To prove \eqref{domestic_bound}, we set $\alpha_i  = -\ee^{\pm 2 \ii A_{i,v}}$, in which case an elementary estimate 
yields $2 - \abs{1 + \alpha_i} \geq c$.
Similarly,
we have $\abs{1 - \alpha_i} \geq c$, which yields $R_2(\abs{1 - \alpha_i}) \leq C$.
Now \eqref{domestic_bound} follows from \eqref{edge_bounds} and \eqref{D leq kappa}.

To prove \eqref{connecting_bound}, we set $\alpha = \ee^{\ii (A_{1,v} - A_{2,v})}$. In order to estimate $Z(\alpha S)_{yz}$, we distinguish two cases according to whether  $\eta \leq M^{-\delta} \omega$. Suppose first that $\eta \leq M^{-\delta} \omega$. Then we have  $\abs{1 - \alpha} \asymp \omega (1 + O(\omega)) \geq c\omega$. We therefore find from the first inequality of \eqref{edge_bounds} that
\begin{equation*}
\abs{Z(\alpha S)_{yz}} \;\leq\; \frac{C}{M} R_2(\omega) \;\leq\; \frac{C}{M} R_2(\omega + \eta)\,,
\end{equation*}
where in the second step we used that $\omega + \eta \leq 2 \omega$ and that $R_2$ is monotone decreasing for small enough arguments 
 (see its definition in \eqref{def_R2}).
 On the other hand, if $\omega < M^\delta \eta$ then we get from \eqref{edge_bounds} that
\begin{multline*}
\abs{Z(\alpha S)_{yz}} \;\leq\; \frac{C}{M} R_2(M^{-\mu}) \;\leq\; \frac{C}{M} \pb{(\omega + \eta)  M^{\mu}}^{1/2} \, R_2(\omega + \eta)
\\
\leq\; \frac{C}{M} M^{(\delta + \mu - \rho)/2} R_2(\omega + \eta) \;\leq\; \frac{C}{M} M^{2 \delta} R_2(\omega + \eta)\,,
\end{multline*}
where in the second step we used that $(\omega + \eta) M^\mu \geq 1$, and  in the last step that $\mu - \rho < 3 \delta$. 
Putting both cases together we get \eqref{connecting_bound}.
\end{proof}

Next, we plug the estimates \eqref{domestic_bound} and \eqref{connecting_bound} into \eqref{V_cutoff4} and sum over $\f y$; we use \eqref{domestic_bound} for $e \in E_i(\cal Y(\Sigma))$ with $i = 1,2$, and \eqref{connecting_bound} for $e \in E_c(\cal Y(\Sigma))$. 
We perform the summation over $\f y$ as in Section \ref{sec:large_Sigma}: by choosing an arbitrary spanning tree $\cal T$ of $\cal Y(\Sigma)$ along with an arbitrary root of $\cal T$. In the summation over $\f y$ on the right-hand side of \eqref{V_cutoff4}, each edge $e \in E(\cal Y(\Sigma))$ encodes a matrix entry that we estimate as follows. For $e \in E_d(\cal Y(\Sigma)) \setminus E(\cal T)$ we use the first estimate of \eqref{domestic_bound}, for $e \in E_d(\cal Y(\Sigma)) \cap E(\cal T)$ the second estimate of \eqref{domestic_bound}, for $e \in E_c(\cal Y(\Sigma)) \setminus E(\cal T)$ the first estimate of \eqref{connecting_bound}, and for $e \in E_c(\cal Y(\Sigma)) \cap E(\cal T)$ the second estimate of \eqref{connecting_bound}. The result is
\begin{align*}
\abs{\cal V'_0(\Sigma)} &\;\leq\; C^{\abs{\Sigma}} N M^{-\abs{E_d(\cal Y(\Sigma)) \setminus E(\cal T)}} \p{\log N}^{\abs{E_d(\cal Y(\Sigma)) \cap E(\cal T)}}
\\
&\qquad \times \pbb{\frac{M^{2 \delta} R_2(\omega + \eta)}{M}}^{\abs{E_c(\cal Y(\Sigma)) \setminus E(\cal T)}} M^{\mu \abs{E_c(\cal Y(\Sigma)) \cap E(\cal T)}}
\\
&\;\leq\; \frac{N}{M} \, \frac{(\log N)^{\abs{\Sigma}}}{M^{\abs{\Sigma} - \abs{Q(\Sigma)}}} \,\pb{ M^{2 \delta} R_2(\omega + \eta)}^{\abs{E_c(\cal Y(\Sigma)) \setminus E(\cal T)}} \, M^{\mu \abs{E_c(\cal Y(\Sigma)) \cap E(\cal T)}}\,,
\end{align*}
where we used that $\abs{E(\cal Y(\Sigma))} = \abs{\Sigma}$ and $\abs{E(\cal T)} = \abs{Q(\Sigma)} - 1$. As before, the factor $N$ arises from the summation over the label of $\f y$ associated with the root of $\cal T$.

Next, we remark that the above proof may be repeated verbatim for the other error term, $\cal V_1'(\Sigma)$. This case is in fact easier: since  $\N^{E(\cal Y(\Sigma))} \setminus B(\Sigma)$  is a finite set  (see Lemma \ref{lem: skeletons}),  we do not have to exploit the cancellations from the summation over $\f b$. Repeating the above argument for $\cal V_1'(\Sigma)$, with the right-hand sides of the corresponding estimates from \eqref{domestic_bound} and \eqref{connecting_bound} replaced with $C/M$, $C$, $C/M$, and $C$ respectively, we find
\begin{equation} \label{V'estimate1}
\abs{\cal V'(\Sigma)} \;\leq\; \cal R(\Sigma) \;\deq\; \frac{N}{M} \, \frac{ M^{3 \delta \abs{\Sigma}}}{M^{\abs{\Sigma} - \abs{Q(\Sigma)}}} \,  R_2(\omega + \eta)^{\abs{E_c(\cal Y(\Sigma)) \setminus E(\cal T)}} \, M^{\mu \abs{E_c(\cal Y(\Sigma)) \cap E(\cal T)}}\,.
\end{equation}
In order to show that $\cal R(\Sigma)$ is small enough, we shall use a graph-theoretic argument to derive appropriate bounds on the exponents. It relies on the following further partition of the set $\Sigma_d$ according to whether a bridge touches both endpoints (white vertices) of a chain.

\begin{definition} \label{def_S^1}
We partition $\Sigma_d = \Sigma_d^0 \sqcup \Sigma_d^1$, where
\begin{equation*}
\Sigma_d^1 \;\deq\; \hb{\sigma \in \Sigma_d \col \text{$\sigma$ touches $a(\cal C_i)$ and $b(\cal C_i)$ for some $i = 1,2$}}\,.
\end{equation*}
We also use $E_d^0(\cal Y(\Sigma))$ and $E_d^1(\cal Y(\Sigma))$ to denote the corresponding disjoint subsets of $E_d(\cal Y(\Sigma))$.
\end{definition}
Note that $\Sigma_d^1$ may contain at most two bridges: one only touching 
the white vertices of $\cal C_1$ and one only touching the white vertices of $\cal C_2$. See Figure \ref{fig:bridges} for an illustration of these three types of bridges.
\begin{figure}[ht!]
\begin{center}
\includegraphics{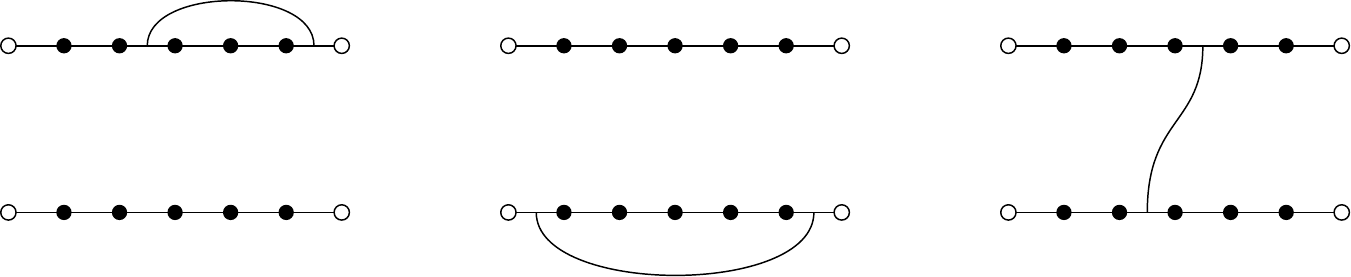}
\end{center}
\caption{A bridge in $\Sigma_d^0$ (left), $\Sigma_d^1$ (centre), and $\Sigma_c$ (right). \label{fig:bridges}}
\end{figure}

For the following counting arguments, for definiteness it will be convenient to assume that $\Sigma^1_d = \emptyset$. Hence, we first show that skeleton
pairings with $\Sigma^1_d \ne \emptyset$ can be easily estimated by
those with $\Sigma^1_d = \emptyset$, at the expense of an unimportant factor.
The following lemma states this fact precisely.
Let
\begin{equation*}
\ol{\fra S}^{\leq} \;\deq\; \hb{\Sigma \in \fra S^\leq \col \Sigma_d^1 = \emptyset}\,.
\end{equation*}
\begin{lemma} \label{lem:two_e_edges}
For each $\Sigma \in \fra S^\leq$ there exists a $\ol \Sigma \in \ol{\fra S}^\leq$ such that and $\cal R(\Sigma) \leq (\log N)^2 \cal R(\ol \Sigma)$.
\end{lemma}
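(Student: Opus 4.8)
The plan is to show that a bridge in $\Sigma_d^1$ can be ``absorbed'' by summing out the coinciding labels of the two white vertices it touches, using the second bound in $\eqref{S_basic_bounds}$ (or, after the convolution and resummation, the second bound of $\eqref{domestic_bound}$). Concretely, suppose $\sigma \in \Sigma_d^1$ touches $a(\cal C_i)$ and $b(\cal C_i)$ for some $i$. By the definition of $Q(\Sigma)$, properties (ii) and (iii) of the minimal vertex partition force $a(\cal C_i)$ and $b(\cal C_i)$ to lie in the same block $q_0 \in Q(\Sigma)$; moreover, since $\sigma$ is a bridge joining the two edges incident to these white endpoints, the block structure around $\sigma$ is completely determined. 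In the graph $\cal Y(\Sigma)$, the edge encoded by $\sigma$ is therefore either a self-loop at $q_0$ or an edge joining $q_0$ to one other block. Either way, one can delete this edge and the block $q_0$ (re-attaching, if necessary, the one other edge of $\cal C_i$ that was incident to a white vertex) to obtain a smaller skeleton pairing $\ol\Sigma$; iterating over the at most two bridges of $\Sigma_d^1$ yields $\ol\Sigma \in \ol{\fra S}^{\leq}$.

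First I would make precise what $\ol\Sigma$ is as a combinatorial object: it is obtained from $\Sigma$ by removing each $\sigma \in \Sigma_d^1$ together with performing the summation $\sum_z Z(\alpha S)_{\,\cdot\, z} \leq C \log N$ over the label attached to the freed white vertices. One must check that $\ol\Sigma$ is still a legitimate skeleton in $\fra S^\leq$, i.e.\ it is still a connected pairing in $\fra M_c$ (removing a domestic bridge cannot disconnect $\cal C_1$ from $\cal C_2$, since connectivity is witnessed by a connecting bridge $\sigma \in \Sigma_c$, which is untouched), it still has no parallel bridges, no block of $Q(\ol\Sigma)$ has size one, and $\abs{\ol\Sigma} = \abs\Sigma - \abs{\Sigma_d^1} \leq K$. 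The only subtlety is that after removing $\sigma$ the chain $\cal C_i$ loses two edges and the vertex count changes, so one should verify directly that the defining conditions of $\fra S$ and $\fra R$ are preserved; this is where I'd be careful, but it is a routine case check.

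The estimate itself then follows by bookkeeping: in the expression $\eqref{V'estimate1}$ for $\cal R(\Sigma)$, removing a bridge $\sigma \in \Sigma_d^1$ decreases $\abs\Sigma$ by one and (because $\sigma \in \Sigma_d$, so it contributes neither an $R_2$ nor an $M^\mu$ factor, and because the freed white-vertex label summation gives only a $\log N$) decreases $\abs{Q(\Sigma)}$ by at most one while contributing at most one factor $\log N$ and no other change to the $R_2$ or $M^\mu$ exponents. Since there are at most two such bridges, we pick up at most $(\log N)^2$; one must also confirm that the exponent $\abs\Sigma - \abs{Q(\Sigma)}$ does not decrease (it stays the same or increases, which only helps), so that $\cal R(\Sigma) \leq (\log N)^2 \cal R(\ol\Sigma)$.

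The main obstacle I anticipate is the careful verification that after deleting a $\Sigma_d^1$ bridge the resulting pairing genuinely lies in $\ol{\fra S}^{\leq}$ and that the partition $Q(\ol\Sigma)$ and the graph $\cal Y(\ol\Sigma)$ behave as claimed under this surgery — in particular that one does not accidentally create a block of size one or parallel bridges, and that the count $\abs\Sigma - \abs{Q(\Sigma)}$ is monotone under the operation. Everything else is elementary power counting against $\eqref{V'estimate1}$.
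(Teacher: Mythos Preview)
Your approach is essentially the same as the paper's: remove the (at most two) bridges of $\Sigma_d^1$, observe that the corresponding edges in $\cal Y(\Sigma)$ are leaves of the spanning tree, and track the exponents in \eqref{V'estimate1}. Two minor sharpenings worth noting: the block $q_0$ containing $a(\cal C_i), b(\cal C_i)$ is always exactly $\{a(\cal C_i), b(\cal C_i)\}$ (white vertices have degree one, so the only bridge touching them is $\sigma$ itself), hence the edge in $\cal Y(\Sigma)$ is never a self-loop and $\abs{Q}$ drops by exactly one per removed bridge, giving $\abs{\Sigma} - \abs{Q(\Sigma)} = \abs{\ol\Sigma} - \abs{Q(\ol\Sigma)}$ exactly rather than merely an inequality. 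Also, among the checks you flag, the one that $\ol\Sigma \notin \{D_1,\dots,D_8\}$ deserves explicit mention; the paper handles it by observing that the only skeletons mapped to $D_1$ or $D_5$ under this surgery are the other dumbbells.
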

\begin{proof}
The operation $\Sigma \mapsto \ol \Sigma$ amounts to simply removing all bridges of $\Sigma_d^1$ from $\Sigma$. Instead of a formal definition, we refer to Figure \ref{fig:SSbar} for a graphical depiction of this operation.
\begin{figure}[ht!]
\begin{center}
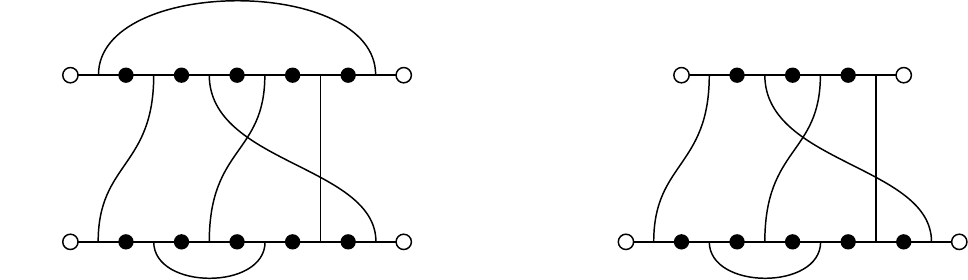
\end{center}
\caption{The operation $\Sigma \mapsto \ol \Sigma$. \label{fig:SSbar}}
\end{figure}
By definition of $Q(\cdot)$, we find that the operation $\Sigma \mapsto \ol \Sigma$ amounts to removing  any of the two vertices $\{a(\cal C_1), b(\cal C_1)\}$ and $\{a (\cal C_2), b(\cal C_2)\}$ that belongs to $Q(\Sigma)$. 
This results in a removal of the corresponding number of leaves from the spanning tree $\cal T$. (The removed bridges always correspond to leaves
in $\cal T$. In particular, $\abs{E_c(\cal Y(\Sigma)) \setminus E(\cal T)}$
and  $\abs{E_c(\cal Y(\Sigma)) \cap E(\cal T)}$
are remain unchanged by this removal.) Note that if $\Sigma \in \fra S^{\leq}$ then $\ol \Sigma \in \ol {\fra S}^\leq$, since by construction if $\Sigma \notin \{D_1, \dots, D_8\}$ then $\ol \Sigma \notin \{D_1, \dots, D_8\}$. The claim now follows easily from  the bound \eqref{V'estimate1} with argument $\Sigma$, as well as the observations  that $\abs{\Sigma} - \abs{Q(\Sigma)} = \abs{\ol \Sigma} - \abs{Q(\ol \Sigma)}$, that $\abs{\Sigma_d} \leq \abs{\ol \Sigma_d} + 2$, that $\abs{\Sigma} \leq \abs{\ol \Sigma} + 2$, and that the two last exponents on the right-hand side of \eqref{V'estimate1} are the same for $\Sigma$ and $\ol \Sigma$. 
\end{proof}

By Lemma \ref{lem:two_e_edges}, it suffices to estimate $\cal R(\Sigma)$ for $\Sigma \in \ol {\fra S}^\leq$. For $\Sigma \in \ol {\fra S}^\leq$ we have $\Sigma_d^0 = \Sigma_d$. Moreover, if there is a bridge touching $a(\cal C_1)$ and $a(\cal C_2)$ as well as a bridge touching $b(\cal C_1)$ and $b(\cal C_2)$, we find that all four white vertices constitute a single block of $Q(\Sigma)$. Otherwise, since $\Sigma_d^1 = \emptyset$, every block of $Q(\Sigma)$ contains a black vertex, so that $Q_b(\Sigma) = Q(\Sigma)$, where $Q_b(\Sigma)$ was defined in \eqref{def_Qb}. Either way, recalling Lemma \ref{lem:2/3_rule}, we conclude for $\Sigma \in \ol{\fra S}^\leq$ and $q \in Q(\Sigma)$ that
\begin{equation} \label{2/3-Sbar}
\abs{q} \;\geq\; 3\,.
\end{equation}

In order to complete the estimate of \eqref{V'estimate1}, and hence the proof of Proposition \ref{prop:small_Sigma}, we shall have to distinguish between the case where $\abs{q} = 3$ for all $q \in Q(\Sigma)$ and the case where there exists a $q \in Q(\Sigma)$ with $\abs{q} > 3$.

\begin{lemma} \label{lem:saturated2/3}
Suppose that $\Sigma \in \ol{\fra S}^\leq$ and $\abs{q} = 3$ for all $q \in Q(\Sigma)$.
Then
\begin{equation} \label{V'saturated}
\cal R(\Sigma) \;\leq\; \frac{N}{M} R_2(\omega + \eta) M^{3 \mu - 1}  M^{C \delta }\,.
\end{equation}
\end{lemma}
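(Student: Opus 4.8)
The plan is to estimate the right-hand side of \eqref{V'estimate1} by carefully bounding the two exponents $\abs{E_c(\cal Y(\Sigma)) \setminus E(\cal T)}$ (the number of connecting loop bridges) and $\abs{E_c(\cal Y(\Sigma)) \cap E(\cal T)}$ (the number of connecting tree bridges), using the structural constraint that $\abs{q} = 3$ for all blocks $q \in Q(\Sigma)$. Introduce the shorthand $\ell \deq \abs{Q(\Sigma)} = \abs{V(\cal Y(\Sigma))}$, $s \deq \abs{\Sigma} = \abs{E(\cal Y(\Sigma))}$, $s_d \deq \abs{\Sigma_d}$, $s_t \deq \abs{E_c(\cal Y(\Sigma)) \cap E(\cal T)}$, and $s_l \deq \abs{E_c(\cal Y(\Sigma)) \setminus E(\cal T)}$. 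Since $\abs{q} = 3$ for all blocks and $\sum_q \abs{q} = \abs{V(\Sigma)} = 2s + 2$, we get the saturated identity $3\ell = 2s + 2$, i.e.\ $s - \ell = \ell/2 - 1$ and $\ell - 1 = (2s-2)/3$. Also $\abs{E(\cal T)} = \ell - 1$, so $s_t \leq \ell - 1$, and $s_l + s_t = \abs{E_c(\cal Y(\Sigma))} \leq s - s_d$.

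First I would observe, as anticipated in the sketch in Section \ref{sec:small_skeletons}, that a saturated skeleton in $\ol{\fra S}^\leq$ that is not a dumbbell must contain at least one domestic bridge, i.e.\ $s_d \geq 1$; this is the key point that separates $D_5$ (and the other dumbbells) from all other saturated skeletons. The reason is that in order for every block of $Q(\Sigma)$ to have size exactly three while respecting the nonbacktracking constraint encoded in $\fra R$, the local bridge structure around each block must be of the restricted form depicted in Figure \ref{fig:three-lump}, and a short case analysis shows that avoiding all domestic bridges forces $\Sigma$ to be one of $D_1, \dots, D_8$. Granting $s_d \geq 1$, we then have $s_l + s_t \leq s - 1$. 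Now plug into \eqref{V'estimate1}: using $R_2(\omega+\eta) \leq M^\mu$ to trade a power of $R_2$ for a power of $M^\mu$ where convenient, bound $s_t \leq \ell - 1$ and $s_l \leq s - 1 - s_t$, so
\begin{equation*}
\cal R(\Sigma) \;\leq\; \frac{N}{M} M^{3\delta s} M^{-(s - \ell)} R_2(\omega+\eta)^{s_l} M^{\mu s_t}
\;\leq\; \frac{N}{M} M^{3\delta s} M^{-(s-\ell)} R_2(\omega+\eta) \pb{R_2(\omega+\eta) M^\mu}^{s - 1 - \ell} M^{\mu(\ell - 1)}\,,
\end{equation*}
wait — more carefully: I would first extract a single factor $R_2(\omega+\eta)$ (there is at least one connecting bridge since $\Sigma \in \fra M_c$, and at least one of them is a loop bridge or else $s_t \geq 1$ contributes; in the latter case use $M^\mu \geq R_2$). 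The cleanest route is: since $s_l + s_t \leq s - 1$ and $s_t \leq \ell - 1$, write $s_l \leq (s-1) - s_t$ and $R_2^{s_l} M^{\mu s_t} \leq R_2 \cdot (R_2 M^{\mu})^{?}$... I would organize the bookkeeping so that exactly one factor $R_2(\omega+\eta)$ survives and the remaining factors of $M$ collect into an exponent of the form $(s - \ell) \cdot(\text{something} \leq 0)$ plus $\mu(\ell - 1) \leq \mu \cdot (2s-2)/3 \leq \mu \cdot (s-1)$... The target bound \eqref{V'saturated} has $M^{3\mu - 1}$, which suggests the intended estimate isolates three connecting bridges worth of $M^\mu$ factors (one for each of at most $\ell - 1 \leq$ a constant on small skeletons, but more sharply, using $s_t \leq \ell - 1$ and $s_l \leq s - 1 - s_t$ together with $s - \ell = \ell/2 - 1 \geq 0$). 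Concretely, $M^{-(s-\ell)} (R_2 M^\mu)^{s_l} M^{\mu s_t}$ with $s_l \leq s - \ell + (\ell - 1 - s_t)$; substituting and using $R_2 \leq M^\mu$ again gives an exponent $\leq 3\mu - 1$ after using $\ell \leq$ the dumbbell bound $\ell = 2$ fails here, so one uses $s - \ell \geq 1$ to absorb the extra $M^\mu$ powers down to three. I would then bound $(\log N)^{\abs{\Sigma}} \leq M^{C\delta}$ since $\abs{\Sigma} \leq K$ is bounded and $\log N \leq C \log M \leq M^{c\delta}$ for any $\delta > 0$ and $M$ large (using $L \leq W^C$ from \eqref{LW_assump}); similarly $M^{3\delta \abs{\Sigma}} \leq M^{C\delta}$.

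The main obstacle I expect is the combinatorial claim that a saturated non-dumbbell skeleton in $\ol{\fra S}^\leq$ must contain a domestic bridge, and — more delicately — getting the exponent bookkeeping to land exactly on $3\mu - 1$ rather than, say, $4\mu - 1$ or worse. The exponent count requires simultaneously exploiting three facts: (i) the saturation identity $3\ell = 2s + 2$; (ii) $s_t \leq \ell - 1$ (spanning tree); and (iii) $s_l + s_t \leq s - 1$ (existence of a domestic bridge). One must arrange the inequalities $R_2 \leq M^\mu$ and $M^{-(s-\ell)} \leq 1$ so that after cancellation only $M^{3\mu}$ and a single $R_2$ remain, with everything else bounded by $M^{C\delta}$. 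I anticipate the tightest case is a small skeleton like the "theta graph" with $s = 3$, $\ell = 2$ — but $\ell = 2$ is excluded from $\fra S^\leq$ being non-dumbbell unless... actually for $\ell = 2$ one needs $s = 2$ by saturation, which is a dumbbell; so the smallest genuinely new case has $\ell = 4$, $s = 5$, and there $s_t \leq 3$, $s_l \leq s - 1 - s_t$; plugging in $M^{-(5-4)} = M^{-1}$ and $M^{\mu s_t}$ with $s_t \leq 3$ gives precisely the $M^{3\mu - 1}$, with the $R_2$ coming from $s_l \geq 1$. Larger saturated skeletons only gain additional $M^{-1/3}$-type suppression from the excess of $M^{-(s-\ell)}$ over $M^{\mu(\ell-1)}$, so the bound \eqref{V'saturated} is uniform. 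I would present the argument by first recording the three inequalities, then doing the one-line exponent computation, then absorbing the logarithms and $\delta$-powers into $M^{C\delta}$.
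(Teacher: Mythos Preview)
Your approach is the same as the paper's: establish that a saturated non-dumbbell skeleton in $\ol{\fra S}^\leq$ must have a domestic bridge ($s_d \geq 1$), combine this with the saturation identity $3\ell = 2s+2$ and the spanning-tree bound $s_t \leq \ell - 1$, and feed everything into \eqref{V'estimate1}. You have correctly identified the three structural inputs and the critical case $\ell = 4$, $s = 5$.

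There are two places where your execution is incomplete. First, the claim $s_d \geq 1$ needs more than a reference to Figure~\ref{fig:three-lump}. The paper's argument runs: from $3\ell = 2s+2$ and parity, $\abs{V(\Sigma)}$ is a multiple of $6$; one checks by exhaustion that the only connected pairings with $\abs{V(\Sigma)} = 6$ are dumbbells, so $\abs{V(\Sigma)} \geq 12$ and hence $\ell = \abs{Q(\Sigma)} \geq 4$. Since at most two blocks of $Q(\Sigma)$ contain white vertices, some block $q$ consists entirely of black vertices; the three bridges touching $q$ then form the configuration of Figure~\ref{fig:three-lump}, and the case analysis on which component of $\cal C$ each vertex of $q$ lies in forces one of these bridges to be domestic. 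You should state and use $\ell \geq 4$ explicitly, since it is also needed in the exponent count below.

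Second, your exponent bookkeeping never converges. The clean way (the paper's) is to first use $R_2 \leq M^\mu$ and $s_t \leq \ell - 1$ to get $R_2^{s_l} M^{\mu s_t} \leq R_2^{s_l + s_t - (\ell - 1)} M^{\mu(\ell-1)} \leq R_2^{s - \ell} M^{\mu(\ell-1)}$, the last step using $s_l + s_t \leq s - 1$. Then, with the saturation identity $s - \ell = s/3 - 2/3$, rewrite \eqref{V'estimate1} as
\begin{equation*}
\cal R(\Sigma) \;\leq\; \frac{N}{M} \, M^{1/3} \, \pb{M^{-1/3 + 3\delta} R_2(\omega+\eta)}^{s - \ell} \, \pb{M^{\mu - 1/3 + 3\delta}}^{\ell - 1} \, M^{3\delta}\,.
\end{equation*}
For $\delta$ small enough that $\mu < 1/3 - 3\delta$, both parenthesized factors are at most $1$. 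Now use the lower bounds $s - \ell \geq 1$ (from $\ell \geq 4$ and saturation) and $\ell - 1 \geq 3$ to keep exactly one factor of $R_2$ and exactly three factors of $M^{\mu - 1/3}$, giving $\frac{N}{M} R_2(\omega+\eta) M^{3\mu - 1} M^{C\delta}$. This is the computation you were circling around; the factorization into two sub-unit factors is what makes it land cleanly.
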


\begin{lemma} \label{lem:non_saturated2/3}
Suppose that $\Sigma \in \ol{\fra S}^\leq$ and there exists a $\ol q \in Q(\Sigma)$ with $\abs{\ol q} > 3$. 
Then
\begin{equation} \label{V'nonsaturated}
\cal R(\Sigma) \;\leq\; \frac{N}{M} R_2(\omega + \eta) M^{\mu - 1/3}  M^{C \delta}\,.
\end{equation}
\end{lemma}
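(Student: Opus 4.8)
### Proof Proposal for Lemma \ref{lem:non_saturated2/3}

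The plan is to bound the exponents of $M$ appearing in the definition of $\cal R(\Sigma)$ in \eqref{V'estimate1}, using the hypothesis that the $2/3$-rule is \emph{not} saturated (i.e.\ some block $\ol q \in Q(\Sigma)$ has $\abs{\ol q} \geq 4$) together with the tree/cotree bookkeeping on $\cal Y(\Sigma)$. Recall the abbreviations $s \deq \abs{\Sigma}$, $\ell \deq \abs{Q(\Sigma)}$, $s_t \deq \abs{E_c(\cal Y(\Sigma)) \cap E(\cal T)}$, $s_l \deq \abs{E_c(\cal Y(\Sigma)) \setminus E(\cal T)}$, and $s_d \deq \abs{\Sigma_d}$. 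Since $\Sigma \in \ol{\fra S}^\leq$, every block of $Q(\Sigma)$ has size at least $3$ by \eqref{2/3-Sbar}, and since $\abs{\ol q} \geq 4$ the counting in \eqref{3Qb_estimate} improves to $3 \ell \leq \abs{V(\Sigma)} - 1 = 2 s + 1$, i.e.\ $\ell \leq \frac{1}{3} + \frac{2 s}{3}$, which is equivalent to $s - \ell \geq \frac{s}{3} - \frac{1}{3}$.

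The first step is to rewrite \eqref{V'estimate1} in terms of these quantities. Using $\abs{E_c(\cal Y(\Sigma))} = s_t + s_l$ and $\abs{E(\cal T)} = \ell - 1$, together with the trivial bound $R_2(\omega + \eta) \geq 1$ and the fact (from the lower bound in \eqref{LW_assump}, \eqref{def_R2}, and $\eta \gg M^{-1/3}$) that $R_2(\omega + \eta) \leq C M^{\mu}$, we estimate
\begin{equation*}
\cal R(\Sigma) \;\leq\; \frac{N}{M} \, M^{3 \delta s} \, M^{-(s - \ell)} \, R_2(\omega + \eta) \, M^{\mu(s_t + s_l - 1)}\,,
\end{equation*}
where I have pulled out one factor $R_2(\omega + \eta)$ and bounded the remaining $R_2$-factors on the cotree by $M^\mu$ each. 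Now I use the two structural constraints $s_t \leq \ell - 1$ (connecting tree bridges are part of a spanning tree) and $s_t + s_l \leq s$ (total bridges), which give $s_t + s_l - 1 \leq s - 1$; but the sharper route is to combine $s_t \leq \ell - 1$ with $s_l \leq s - \ell + 1 - s_t$ — no, more simply: $s_t + s_l - 1 \leq (s_t) + (s - s_t) - 1 = s - 1$ is too crude. Instead I keep the split: the $M^\mu$ factors come only from $s_t$ connecting tree bridges, so the exponent of $M^\mu$ is exactly $s_t \leq \ell - 1$, while the $s_l$ cotree connecting bridges each contribute $R_2(\omega+\eta) \leq M^\mu$ as well; so the honest bound is $\cal R(\Sigma) \leq \frac{N}{M} M^{3\delta s} M^{\ell - s} R_2(\omega+\eta)^{s_l} M^{\mu s_t}$, and using $R_2 \leq M^\mu$ on all but one of the $s_l$ factors and $s_t \leq \ell - 1$,
\begin{equation*}
\cal R(\Sigma) \;\leq\; \frac{N}{M} R_2(\omega + \eta) \, M^{3 \delta s} \, M^{\ell - s} \, M^{\mu(s_l - 1)} \, M^{\mu(\ell - 1)}\,.
\end{equation*}

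The second step is to feed in $\ell - s \leq \frac{1}{3} - \frac{s}{3}$ and $\ell \leq \frac{1}{3} + \frac{2s}{3}$, and to observe $s_l \leq s - \ell + 1 \leq \frac{2}{3} - \frac{s}{3} \cdot(-1)$... more carefully $s_l = \abs{E_c(\cal Y(\Sigma)) \setminus E(\cal T)} \leq \abs{E(\cal Y(\Sigma)) \setminus E(\cal T)} = s - (\ell - 1) = s - \ell + 1$. Thus $M^{\ell - s} M^{\mu(s_l - 1)} M^{\mu(\ell-1)} \leq M^{\ell - s} M^{\mu(s - \ell)} M^{\mu(\ell - 1)} \cdot M^{-\mu} = M^{\ell - s} M^{\mu(s-1)}$... this is going the wrong way because $s_l - 1 + \ell - 1 = s_l + \ell - 2$ and $s_l + \ell \leq s + 1$. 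So the combined exponent of $M^\mu$ is $s_l + s_t \leq s$, hence $\leq M^{\mu s} M^{-\mu}$ after extracting the $-1$; combining with $M^{\ell - s} M^{3\delta s}$ and $\ell - s \leq \frac13 - \frac s3$ gives an exponent $\frac13 - \frac s3 + \mu s - \mu + 3\delta s = (\mu - \frac13 + 3\delta)s + \frac13 - \mu$. Since $\mu < 1/3$ and $\delta$ is small, $\mu - \frac13 + 3\delta < 0$, so this is maximized at the smallest admissible $s$; here $\Sigma \notin \{D_1,\dots,D_8\}$ forces $s \geq 3$ (as recorded in \eqref{s-lgeq1}), giving exponent at most $3\mu - 1 + C\delta < \mu - \frac13 + C\delta$ after using $\mu - \frac13 < 0$ twice more. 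Packaging, $\cal R(\Sigma) \leq \frac{N}{M} R_2(\omega+\eta) M^{\mu - 1/3} M^{C\delta}$, which is \eqref{V'nonsaturated}.

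The main obstacle is the correct accounting of the $M^\mu$ and $R_2$ factors: one must be careful that the $M^\mu$ factors appear \emph{only} from connecting bridges on the spanning tree (bounded by $\ell - 1 \leq $ the sharper value from the non-saturation) and from the cotree $R_2$-factors, and one must use the improved inequality $\ell \leq \frac13 + \frac{2s}{3}$ — rather than the generic $\ell \leq \frac23(1+s)$ — in exactly the place where it produces the crucial extra $M^{-1/3}$ relative to Lemma \ref{lem:saturated2/3}. A clean way to organize this is to set $r \deq s_l - 1$ (number of "extra" cotree connecting bridges beyond one) and $t \deq s_t$, note $t \leq \ell - 1$ and $r + t \leq s - \ell$ (since the $s_l + s_t$ connecting bridges minus one, together with the $\ell - 1 \geq s_t$ tree constraint, leave at most $s - \ell$ after removing the tree's worth), then bound every remaining $M^\mu$ and $R_2$ factor by $M^\mu \leq M^{1/3 - c}$ and use $3\delta$ small. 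Once the exponent of $M$ is shown to be $(\mu - 1/3) + O(\delta)$ uniformly over $\ol{\fra S}^\leq \setminus \{D_1,\dots,D_8\}$ under the non-saturation hypothesis, the lemma follows, and combining with Lemma \ref{lem:saturated2/3} and Lemma \ref{lem:two_e_edges} completes the proof of Proposition \ref{prop:small_Sigma}.
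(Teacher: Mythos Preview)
Your approach --- exploit the improved inequality $\ell \leq \tfrac{2s}{3} + \tfrac{1}{3}$ coming from the block of size $\geq 4$, then balance powers in \eqref{V'estimate1} --- is exactly the paper's. However, your write-up is a sequence of false starts that never quite settles, and the argument you finally land on has two defects. First, bounding $R_2^{s_l} M^{\mu s_t} \leq R_2 \cdot M^{\mu(s_l + s_t - 1)}$ by ``pulling out one $R_2$'' tacitly assumes $s_l \geq 1$, which you never verify; when $s_l = 0$ the inequality still holds (because $s_t \leq \ell - 1 \leq s - 1$ via the improved $2/3$-rule and $s \geq 2$, and $R_2 \geq 1$), but this must be said. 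Second, your arithmetic at $s = 3$ yields $2\mu - \tfrac{2}{3} + 9\delta$, not $3\mu - 1 + C\delta$ as you write; the conclusion survives since $2\mu - \tfrac{2}{3} \leq \mu - \tfrac{1}{3}$. Your suggested ``clean way'' at the end is also wrong: $r + t = s_l - 1 + s_t$ is bounded by $s - 1$, not by $s - \ell$ (take $s_d = 0$, $s_l = s - \ell + 1$, $s_t = \ell - 1$).

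The paper's organization avoids all of this. It first uses $R_2 \leq M^\mu$ in the direction $R_2^{s_l} M^{\mu s_t} \leq R_2^{\,s - \ell + 1} M^{\mu(\ell - 1)}$ (valid for all $s_l \geq 0$, from $s_t \leq \ell - 1$ and $s_l + s_t \leq s$), and then factors
\[
\cal R(\Sigma) \;\leq\; \frac{N}{M}\, M^{1/3} \pb{M^{-1/3 + 3\delta} R_2}^{\,s - \ell + 1} \pb{M^{\mu - 1/3 + 3\delta}}^{\ell - 1}\,.
\]
Both bases are at most $1$ for small $\delta$; since $s - \ell + 1 \geq 1$ (from $\ell \leq \tfrac{2s}{3} + \tfrac{1}{3}$ and $s \geq 3$) and $\ell - 1 \geq 1$, the bound \eqref{V'nonsaturated} follows in one line with no case distinction and no optimization over $s$.
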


\begin{proof}[Proof of Lemma \ref{lem:saturated2/3}]
We first claim that there is at least
one domestic bridge, i.e.\ that $\Sigma_d \neq \emptyset$.

Clearly, $\abs{V(\Sigma)}$ is even. Recall that $\bigcup_{q \in Q(\Sigma)} q = V(\Sigma)$. Since each block of $Q(\Sigma)$ has size $3$, we conclude that $\abs{V(\Sigma)}$ is multiple of $3$, and hence of $6$. A simple exhaustion of all possible pairings $\Sigma \in \ol{\fra S}^\leq$ that saturate the first inequality in \eqref{3Qb_estimate} shows that there is no such $\Sigma$ satisfying $\absb{\bigcup_{q \in Q(\Sigma)} q} = 6$. (In fact, any connected pairing with at most six vertices is a dumbbell pairing,
which are excluded by the definition \eqref{frastar} of $\fra S^*$.)
 Hence we find that $\absb{\bigcup_{q \in Q(\Sigma)} q} \geq 12$, so that $\abs{Q(\Sigma)} \geq 4$.

Next, note that $Q(\Sigma)$ contains  at most  two blocks $q$ that contain white vertices of $\Sigma$,  since $\Sigma$ contains four white vertices, and, for each $i \in \{1,2\}$, those of $\cal C_i$ are in the same block of $Q(\Sigma)$. (Recall items (ii) and (iii) after \eqref{indicator function behind Q}). 
Since $\abs{Q(\Sigma)} \geq 4$, we find that there is a block $q \in Q(\Sigma)$ that contains only black vertices. Let $\Sigma(q)$ be the set of bridges of $\Sigma$ touching a vertex of $q$; see Figure \ref{fig:three-lump}. By definition of $Q(\Sigma)$, we have $\abs{\Sigma(q)} = 3$. Now if all vertices of $q$ belong to the same connected component of $\cal C$, then $\Sigma(q) \subset \Sigma_d$. Otherwise, let $q = \{i,j,k\}$ with $j$ and $k$ belonging to the same connected component of $\cal C$. Then both bridges touching $i$ are in $\Sigma_c$; the remaining bridge of $\Sigma(q)$ must touch both $j$ and $k$, and is therefore in $\Sigma_d$. Either way, we find that $\Sigma_d \neq \emptyset$, as claimed above.
\begin{figure}[ht!]
\begin{center}
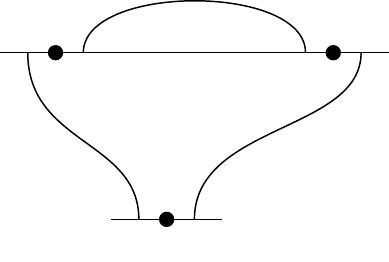
\end{center}
\caption{A block $q = \{i,j,k\} \in Q(\Sigma)$ along with the three bridges of $\Sigma(q)$. We do not draw the other vertices or bridges. \label{fig:three-lump}}
\end{figure}

For the following, abbreviate $s_l \deq \abs{E_c(\cal Y(\Sigma)) \setminus E(\cal T)}$ and $s_t \deq \abs{E_c(\cal Y(\Sigma)) \cap E(\cal T)}$.
From the saturated inequality  \eqref{3Qb_estimate} we find
\begin{equation} \label{QSS}
\abs{Q(\Sigma)} \;=\; \frac{2 \abs{\Sigma}}{3}  + \frac{2}{3}\,.
\end{equation}
Plugging this into \eqref{V'estimate1} yields
\begin{equation} \label{R_Sigma_bound}
\cal R(\Sigma) \;=\; \frac{N}{M} \, M^{3 \delta \abs{\Sigma}}  M^{2/3 - \abs{\Sigma}/3} \, R_2(\omega + \eta)^{s_l} \, M^{\mu s_t}\,.
\end{equation}

Recall that $\abs{\Sigma_d} \geq 1$ and $\abs{\Sigma_d} + s_l + s_t = \abs{\Sigma}$. Moreover, $s_t \leq \abs{E(\cal Y(\Sigma)) \cap E(\cal T)} = \abs{Q(\Sigma)} - 1$. Since $R_2(\omega + \eta)  \leq M^\mu$, we conclude
\begin{multline*}
R_2(\omega + \eta)^{s_l} M^{\mu s_t} \;\leq\;
R_2(\omega + \eta)^{s_l + s_t - \abs{Q(\Sigma)} + 1} M^{\mu (\abs{Q(\Sigma)} - 1)}
\\
\;=\; R_2(\omega + \eta)^{\abs{\Sigma} - \abs{\Sigma_d} - \abs{Q(\Sigma)} + 1} M^{\mu (\abs{Q(\Sigma)} - 1)}
\;\leq\; R_2(\omega + \eta)^{\abs{\Sigma} - \abs{Q(\Sigma)}} M^{\mu (\abs{Q(\Sigma)} - 1)}\,.
\end{multline*}

Thus we get
\begin{align*}
\cal R(\Sigma) &\;\leq\; \frac{N}{M} \, M^{3 \delta \abs{\Sigma}} M^{2/3 - \abs{\Sigma}/3} R_2(\omega + \eta)^{\abs{\Sigma} - \abs{Q(\Sigma)}} \, M^{\mu (\abs{Q(\Sigma)} - 1)}
\\
&\;=\; \frac{N M^{3 \delta}  M^{1/3}}{M} \, \pb{M^{-1/3 + 3 \delta} R_2(\omega + \eta)}^{\abs{\Sigma} - \abs{Q(\Sigma)}} \, (M^{\mu - 1/3 + 3 \delta})^{\abs{Q(\Sigma)} - 1}
\\
&\;\leq\; \frac{N}{M} R_2(\omega + \eta) M^{3 \mu - 1}  M^{15 \delta}\,,
\end{align*}
where in the last step we used \eqref{QSS} to get $\abs{\Sigma} - \abs{Q(\Sigma)} = \abs{\Sigma}/3 - 2/3 \geq 1$, as well as $\abs{Q(\Sigma)} \geq 4$ and $\mu <1/3$. Here we chose $\delta > 0$ in Proposition \ref{prop: expansion with trunction} small enough that $\mu < 1/3 - 3\delta$. 
We also used that 
$M^{-1/3 + 3 \delta} R_2(\omega + \eta) \leq 1$. This concludes the proof.
\end{proof}

\begin{proof}[Proof of Lemma \ref{lem:non_saturated2/3}]
Since $\abs{\ol q} \geq 4$ and all other blocks of $Q(\Sigma)$ have size at least 3 by \eqref{2/3-Sbar}, we find that
\begin{equation} \label{QSS_4}
\abs{Q(\Sigma)} \;\leq\; 1 + \frac{2 \abs{\Sigma} + 2 - \abs{\ol q}}{3} \;\leq\; \frac{2 \abs{\Sigma}}{3} + \frac{1}{3}\,,
\end{equation}
where $2 \abs{\Sigma} + 2 - \abs{\ol q}$ is the number of vertices of $\Sigma$ not in $\ol q$. Note the improvement of \eqref{QSS_4} over \eqref{QSS}. Using the notation of the proof of Lemma \ref{lem:saturated2/3}, we get from \eqref{V'estimate1}, in analogy to \eqref{R_Sigma_bound},
\begin{equation*}
\cal R(\Sigma) \;\leq\; \frac{N}{M} \, M^{3 \delta \abs{\Sigma}}  M^{1/3 - \abs{\Sigma}/3} \,  R_2(\omega + \eta)^{s_l} \, M^{\mu s_t}\,.
\end{equation*}
Now we proceed as in the proof of Lemma \ref{lem:saturated2/3}, using $\abs{\Sigma_d} \geq 0$, $\abs{\Sigma_d} + s_l + s_t = \abs{\Sigma}$, and $s_t \leq \abs{Q(\Sigma)} - 1$. We get
\begin{align*}
\cal R(\Sigma) &\;\leq\; \frac{N M^{1/3}}{M}  \, \pb{M^{-1/3 + 3 \delta} R_2(\omega + \eta)}^{\abs{\Sigma} - \abs{Q(\Sigma)} + 1} \, (M^{\mu- 1/3 + 3 \delta})^{\abs{Q(\Sigma)} - 1}
\\
&\;\leq\; \frac{N}{M} R_2(\omega + \eta) M^{\mu - 1/3}   M^{6 \delta} \,.
\end{align*}
In the last step we used that $\abs{\Sigma} - \abs{Q(\Sigma)} \geq 0$, which follows from \eqref{QSS_4} and from
 $\abs{\Sigma} \geq 3$ for $\Sigma \in \ol {\fra S}^{\leq}$, and that $\abs{Q(\Sigma)} \geq 2$. (In fact, since $\Sigma \notin \{D_1, \dots, D_8\}$ one may easily check that $\abs{Q(\Sigma)} \geq 3$.)
This concludes the proof.
\end{proof}

From Lemmas \ref{lem:saturated2/3}, \ref{lem:non_saturated2/3}, and \ref{lem:two_e_edges}, we conclude that for all $\Sigma \in \fra S^\leq$ we have
\begin{equation} \label{V'_leq_R}
\abs{\cal V'(\Sigma)} \;\leq\; \cal R(\Sigma) \;\leq\; \frac{N}{M} R_2(\omega + \eta) M^{3 \mu - 1}  M^{4 \delta K}\,.
\end{equation}
In order to conclude the proof of Proposition \ref{prop:small_Sigma}, we need an analogous estimate of $\cal V''(\Sigma)$. This may be obtained by repeating the above argument almost verbatim; the only nontrivial difference is that, on the right-hand side of \eqref{V_cutoff4}, the factor $Z(\ee^{\ii (A_1 - A_2)} S)_{y_e}$ associated with the edge $e \in E_c(\cal Y(\Sigma))$ is replaced with $Z(\ee^{\ii (A_1 + A_2)} S)_{y_e}$. 
Since $\abs{1 - \ee^{\ii (A_1 + A_2)}} \geq c$ on the support of the convolution integral, 
 we replace \eqref{connecting_bound} with
\begin{equation*}
\absb{Z(-\ee^{\ii (A_1 + A_2)} S)_{yz}} \;\leq\; \frac{C}{M} \;\leq\; \frac{C}{M} R_2(\omega + \eta)  \,, \qquad
\sum_z \absb{Z(\ee^{\ii (A_1 - A_2)} S)_{yz}} \;\leq\; C M^\mu\,.
\end{equation*}
Thus we find, for any $\Sigma \in \fra S^\leq$, that
\begin{equation} \label{V''_leq_R}
\abs{\cal V''(\Sigma)} \;\leq\; \cal R(\Sigma)\,.
\end{equation}
Hence Proposition \ref{prop:small_Sigma} follows from \eqref{V'_leq_R}, \eqref{V''_leq_R}, and the observation that $\fra S^\leq$ is a finite set that is independent of $N$.
This concludes the proof of Proposition \ref{prop:small_Sigma}.

We conclude this subsection with an analogue of Proposition \ref{prop:small_Sigma} in the case \textbf{(C1)}. Its proof follows along the same lines as that of Proposition \ref{prop:small_Sigma}, and is omitted.
\begin{proposition} \label{prop:small_Sigma_c1}
Suppose that $\phi_1$ and $\phi_2$ satisfy \eqref{Cauchy}. Suppose moreover that \eqref{D leq kappa} holds for some small enough $c_* > 0$. Then for any fixed $K \in \N$ we have \eqref{sum_V_Sigma_small}.
\end{proposition}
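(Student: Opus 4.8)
The plan is to transcribe the proof of Proposition \ref{prop:small_Sigma} almost verbatim, exploiting two features special to the case \textbf{(C1)}. First, $\phi(E) = \frac{2}{E^2+1}$ is analytic in the strip $\absb{\im E} < 1$, so Proposition \ref{prop: expansion with trunction} applies with exponentially small errors. Second, and more importantly, \eqref{Cauchy case identity} collapses the convolution $\psi_i^\eta * \gamma_n$ to the explicit complex shift $\gamma_n(E_i + \ii\eta)$; consequently the delicate truncation of the tails of $\psi_i$ on the scale $\eta M^{\delta/2}$ -- the third reduction step in the proof of Proposition \ref{prop:small_Sigma}, and the \emph{sole} source of the factors $M^{C\delta}$ there -- is unnecessary, since there is no convolution variable left to truncate. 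So, starting from \eqref{def of V Sigma}, I would perform only the first two reduction steps: decouple the upper bound on the $\f b$-summation by replacing $\indb{2\sum_\sigma b_\sigma \leq M^\mu}$ with $\prod_\sigma \ind{b_\sigma \leq M^\mu}$ using \eqref{bound on gamma}, and replace $\wt\gamma_{n_i(\Sigma,\f b)}(E_i,\phi_i)$ with $\gamma_{n_i(\Sigma,\f b)}(E_i+\ii\eta)$ using \eqref{gamma - g} together with \eqref{Cauchy case identity}; after the brutal $\f b$- and $\f y$-summations both steps contribute only $O_{q,\Sigma}(NM^{-q})$.

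Next I would split $\cal V(\Sigma) = 2\re\pb{\cal V'(\Sigma) + \cal V''(\Sigma)} + O_{q,\Sigma}(NM^{-q})$ via \eqref{2re2re}, where $\cal V'(\Sigma)$ carries the factor $\gamma_{n_1(\Sigma,\f b)}(E_1+\ii\eta)\,\ol{\gamma_{n_2(\Sigma,\f b)}(E_2+\ii\eta)}$ and $\cal V''(\Sigma)$ the factor $\gamma_{n_1(\Sigma,\f b)}(E_1+\ii\eta)\,\gamma_{n_2(\Sigma,\f b)}(E_2+\ii\eta)$, insert the explicit form \eqref{claim about gamma n} of $\gamma_n$, and sum the geometric series over each $b_\sigma \in \{1,\dots,[M^\mu]\}$ exactly as in the passage from \eqref{V_cutoff2} to \eqref{V_cutoff4} (the correction from the finite set $\N^\Sigma \setminus B(\Sigma)$, cf.\ Lemma \ref{lem: skeletons}, being handled by trivial bounds as for $\cal V'_1(\Sigma)$ there). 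The outcome is that each bridge $\sigma \in \Sigma$ contributes an entry of $Z(\alpha_\sigma S)$ with $Z$ as in \eqref{def_Z}; writing $A_i \deq \arcsin(E_i+\ii\eta)$, which has $\im A_i > 0$, a short computation with the phases in \eqref{claim about gamma n} gives $\alpha_\sigma = -\ee^{2\ii A_1}$ resp.\ $-\ee^{-2\ii\ol{A_2}}$ for a domestic bridge in $\Sigma_1$ resp.\ $\Sigma_2$, $\alpha_\sigma = \ee^{\ii A_1}\ol{\ee^{\ii A_2}}$ for a connecting bridge in $\cal V'(\Sigma)$, and $\alpha_\sigma = -\ee^{\ii(A_1+A_2)}$ for a connecting bridge in $\cal V''(\Sigma)$; in every case $\absb{\alpha_\sigma} < 1$, so Lemma \ref{lem: lct}, \eqref{lclt} and Lemma \ref{lem: bounds on S} apply.

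I would then verify the analogues of the key bounds \eqref{domestic_bound}--\eqref{connecting_bound}, with $A_{i,v}$ replaced by $A_i$ and \emph{without} the factor $M^{2\delta}$. For a domestic bridge, \eqref{D leq kappa} gives $\absb{1-\alpha_\sigma} \geq c$ and $2 - \absb{1+\alpha_\sigma} \geq c$ (using $\absb{1-\ee^{-2\ii\arcsin E_2}} = 2\abs{E_2} \leq 2(1-\kappa)$ for the $\Sigma_2$ case); the same holds for a connecting bridge in $\cal V''(\Sigma)$, since $A_1+A_2 = 2\arcsin E + O(\eta)$ keeps $\ee^{\ii(A_1+A_2)}$ bounded away from $-1$, so $\absb{1-\alpha_\sigma} = \absb{1+\ee^{\ii(A_1+A_2)}} \geq c$. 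For a connecting bridge in $\cal V'(\Sigma)$ a first-order Taylor expansion of $\arcsin$ around $E$ yields $A_1 - \ol{A_2} = \frac{2}{\pi\nu(E)}(2\ii\eta - \omega) + O\pb{(\omega+\eta)^2}$, whence $\absb{1-\alpha_\sigma} \asymp \omega+\eta$ and $2 - \absb{1+\alpha_\sigma} \asymp \eta$. Combined with \eqref{edge_bounds} and the monotonicity of $R_2$ (see \eqref{def_R2}), this gives $\absb{Z(\alpha_\sigma S)_{yz}} \leq C/M$, $\sum_z \absb{Z(\alpha_\sigma S)_{yz}} \leq C\log N$ for the non-singular bridges and $\absb{Z(\alpha_\sigma S)_{yz}} \leq \frac{C}{M}R_2(\omega+\eta)$, $\sum_z \absb{Z(\alpha_\sigma S)_{yz}} \leq CM^\mu$ for the connecting bridges of $\cal V'(\Sigma)$.

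From this point the combinatorial argument is identical to the one following \eqref{V_cutoff4}: reduce to $\Sigma_d^1 = \emptyset$ via the obvious analogue of Lemma \ref{lem:two_e_edges}, choose a spanning tree of the connected graph $\cal Y(\Sigma)$ (Lemma \ref{lem:Z_Sigma_connected}), sum the labels $\f y$ using the entrywise bounds off the tree and the row-sum bounds on the tree, and apply the $2/3$-rule (Lemma \ref{lem:2/3_rule}) exactly as in Lemmas \ref{lem:saturated2/3} and \ref{lem:non_saturated2/3}. Because no factor $M^{C\delta}$ now enters, this produces $\absb{\cal V'(\Sigma)} + \absb{\cal V''(\Sigma)} \leq \frac{C_K N}{M}R_2(\omega+\eta)(\log N)^{C_K}M^{3\mu-1}$ for every $\Sigma \in \fra S^{\leq}_K$, with $\cal V''(\Sigma)$ in fact of size $O(NM^{-2}(\log N)^{C_K})$ since all of its bridges are non-singular. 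Since $\mu < 1/3$, since $\log N \leq C\log M$ (or $\leq CW^c$ in the regime $L \leq \exp(W^c)$, where the exponential errors above still dominate), and since $\fra S^{\leq}_K$ is a finite set independent of $N$, summing over $\Sigma \in \fra S^{\leq}_K$ yields \eqref{sum_V_Sigma_small} for any $c_0 < 1 - 3\mu$. The only genuinely new point -- and hence the part needing care rather than mere bookkeeping -- is the identification of the connecting-bridge phases together with the elementary expansion showing $\absb{1-\alpha_\sigma} \asymp \omega+\eta$ in $\cal V'(\Sigma)$; everything else is the $d$-dimensional power counting already carried out for \textbf{(C2)}.
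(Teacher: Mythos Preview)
Your proposal is correct and is precisely the argument the paper has in mind: reproduce the proof of Proposition \ref{prop:small_Sigma}, replacing the three-step reduction by the two-step one (decoupling of $\f b$ and undoing the truncation of $\wt\gamma$), since in the case \textbf{(C1)} the identity \eqref{Cauchy case identity} turns the convolution into the complex shift $E_i \mapsto E_i + \ii\eta$ and makes the tail truncation of $\psi_i$ (hence the factors $M^{C\delta}$) superfluous. Your identification of the bridge phases $\alpha_\sigma$, the verification that $\abs{1-\alpha_\sigma} \asymp \omega+\eta$ for the connecting bridges of $\cal V'(\Sigma)$ (now directly, without splitting into the two subcases $\eta \lessgtr M^{-\delta}\omega$), and the subsequent appeal to Lemmas \ref{lem:two_e_edges}--\ref{lem:non_saturated2/3} are all exactly as intended.
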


For future reference we emphasize that the only information about the matrix entries of $Z(\cdot)$ that is required for the estimate \eqref{sum_V_Sigma_small} to hold is \eqref{domestic_bound} and \eqref{connecting_bound}. Thus, the conclusion of the above argument may be formulated in the following more general form.

\begin{lemma}
Let $\Sigma \notin \{D_1, \dots D_8\}$, and suppose that we have a family of matrices $\cal Z(\sigma, E_1, E_2, L) \equiv Z(\sigma)$ parametrized by $\sigma \in \Sigma$ satisfying
\begin{equation} \label{domestic_bound2}
\abs{\cal Z_{xy}(\sigma)} \;\leq\; \frac{C}{M}\,, \qquad
\sum_{y} \abs{\cal Z_{xy}(\sigma)} \;\leq\; C \log N
\end{equation}
for $\sigma \in \Sigma_1 \cup \Sigma_2$ and
\begin{equation} \label{connecting_bound2}
\abs{\cal Z_{xy}(\sigma)} \;\leq\; \frac{C}{M} M^{2 \delta} R_2(\omega + \eta)\,, \qquad
\sum_y \abs{\cal Z_{xy}(\sigma)} \;\leq\; C M^\mu\,
\end{equation}
for $\sigma \in \Sigma_c$.

Then for small enough $\delta$ there exists a $c_0 > 0$ such that
\begin{multline*}
\sum_{\f y \in \bb T^{Q(\Sigma)}} \sum_{\f x \in \bb T^{V(\Sigma)}} \pBB{\prod_{q \in Q(\Sigma)} \prod_{i \in q} \ind{x_i = y_q}} \pBB{\prod_{\{e, e'\} \in \Sigma} \cal Z_{x_e}(\{e,e'\})}
\\
=\;
\sum_{\f x \in \bb T^{V(\Sigma)}} I_0(\f x)
\pBB{\prod_{\{e,e'\} \in \Sigma} J_{\{e,e'\}} \cal Z_{x_e}(\{e,e'\})} \;\leq\; \frac{C_\Sigma N}{M} R_2(\omega + \eta)  M^{-c_0}\,.
\end{multline*}
\end{lemma}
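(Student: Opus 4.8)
The plan is to observe that this statement is an abstraction of the estimates already established in the proof of Proposition \ref{prop:small_Sigma}, and to reproduce that argument using only the input bounds \eqref{domestic_bound2} and \eqref{connecting_bound2}. First, the claimed identity between the two forms of the left-hand side is purely definitional: the indicator $\prod_{q \in Q(\Sigma)} \prod_{i \in q} \ind{x_i = y_q}$ forces all vertices in a common block of $Q(\Sigma)$ to carry the same label, and by the defining properties (i)--(iii) of $Q(\Sigma)$ after \eqref{indicator function behind Q}, after summing out the free labels $\f y$ this is precisely the constraint $I_0(\f x) \prod_{\{e,e'\} \in \Sigma} J_{\{e,e'\}}(\f x)$, with $I_0$ from \eqref{def_I_ind} and $J_{\{e,e'\}}$ from \eqref{def_I_sigma}. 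So it suffices to bound the first form.

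Next I would carry out the summation over $\f y \in \bb T^{Q(\Sigma)}$ using the associated graph $\cal Y(\Sigma)$ from Definition \ref{def:Z_Sigma}, which is connected by Lemma \ref{lem:Z_Sigma_connected}. As a preliminary step, I would reduce to the case $\Sigma_d^1 = \emptyset$ exactly as in Lemma \ref{lem:two_e_edges}: the operation $\Sigma \mapsto \ol\Sigma$ of removing the (at most two) bridges touching both white vertices of a single chain costs only a factor $(\log N)^2$ in the final bound (here using the second estimate of \eqref{domestic_bound2} to sum out the coinciding white labels), and $\ol\Sigma \notin \{D_1,\dots,D_8\}$ by construction. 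Then I fix a spanning tree $\cal T$ of $\cal Y(\Sigma)$ with an arbitrary root and sum the labels $\f y$ starting from the leaves: the root contributes a factor $N$; each edge in $E_d(\cal Y(\Sigma)) \setminus E(\cal T)$ contributes $C/M$ by the first bound of \eqref{domestic_bound2}; each edge in $E_d(\cal Y(\Sigma)) \cap E(\cal T)$ contributes $C \log N$ by the second bound of \eqref{domestic_bound2}; each edge in $E_c(\cal Y(\Sigma)) \setminus E(\cal T)$ contributes $\frac{C}{M} M^{2\delta} R_2(\omega + \eta)$ by the first bound of \eqref{connecting_bound2}; and each edge in $E_c(\cal Y(\Sigma)) \cap E(\cal T)$ contributes $C M^\mu$ by the second bound of \eqref{connecting_bound2}. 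Using $\abs{E(\cal Y(\Sigma))} = \abs{\Sigma}$ and $\abs{E(\cal T)} = \abs{Q(\Sigma)} - 1$, this produces exactly the upper bound $\cal R(\Sigma)$ of \eqref{V'estimate1}.

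Finally I would invoke Lemmas \ref{lem:saturated2/3} and \ref{lem:non_saturated2/3}, which bound $\cal R(\Sigma)$ purely in terms of the graph-theoretic data of $\Sigma \in \ol{\fra S}^\leq$ and hence apply verbatim: together they give $\cal R(\Sigma) \leq \frac{N}{M} R_2(\omega + \eta) M^{3\mu - 1} M^{C\delta}$, and choosing $\delta$ small enough (so that $\mu < 1/3 - 3\delta$) and using $\mu < 1/3$ yields the claimed bound $\frac{C_\Sigma N}{M} R_2(\omega + \eta) M^{-c_0}$ for some $c_0 > 0$, with $C_\Sigma$ absorbing the factors $C^{\abs{\Sigma}}$, $(\log N)^{\abs{\Sigma}+2}$, and the fixed cardinality of $\ol{\fra S}^\leq$ (though here $\Sigma$ is a single fixed skeleton, so only $C^{\abs\Sigma}$ and logarithmic factors appear, which are polynomially bounded and absorbed into $M^{-c_0}$ after slightly shrinking $c_0$). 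There is essentially no genuine obstacle here: the only point requiring care is to verify that the arguments of Lemmas \ref{lem:saturated2/3}--\ref{lem:non_saturated2/3}, and of the reduction in Lemma \ref{lem:two_e_edges}, use nothing about the matrices $\cal Z(\sigma)$ beyond the two-sided bounds \eqref{domestic_bound2}--\eqref{connecting_bound2} and the combinatorial structure of $\Sigma$ and $\cal Y(\Sigma)$ --- which is indeed the case, since those lemmas are stated and proved in terms of the abstract quantity $\cal R(\Sigma)$ from \eqref{V'estimate1}.
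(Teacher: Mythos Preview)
Your proposal is correct and matches the paper's approach exactly: the paper does not give a separate proof of this lemma but introduces it as a direct abstraction of the preceding argument (the proof of Proposition \ref{prop:small_Sigma}), noting that only the bounds \eqref{domestic_bound} and \eqref{connecting_bound} were used. Your plan---verifying the definitional identity, reducing to $\Sigma_d^1=\emptyset$ via Lemma \ref{lem:two_e_edges}, summing over $\f y$ along a spanning tree of $\cal Y(\Sigma)$ to reach the bound $\cal R(\Sigma)$ of \eqref{V'estimate1}, and then invoking Lemmas \ref{lem:saturated2/3} and \ref{lem:non_saturated2/3}---is precisely that argument.
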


\subsection{Conclusion of the proof of Proposition \ref{prop: main} and Theorems \ref{thm: main result}--\ref{thm: Theta 1}} \label{sec:conclusion}

We may now conclude the proof of Proposition \ref{prop: main}. 
As indicated before, the error terms $\cal E$ resulting from 
the simplifications {\bf (S1)}--{\bf (S3)} are small; the precise 
statement is the following proposition that is proved in \cite{EK4}.
\begin{proposition} \label{prop:calE}
The error term $\cal E$ in \eqref{F in terms of skeletons} arising from the simplifications {\bf (S1)}--{\bf (S3)} satisfies
\begin{equation}
\abs{\cal E} \;\leq\; \frac{CN}{M} M^{-c_0} R_2(\omega + \eta)\,,
\end{equation}
for some constant $c_0 > 0$.
\end{proposition}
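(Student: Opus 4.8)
The final statement to prove is Proposition \ref{prop:calE}, which asserts that the error terms $\cal E$ arising from the three simplifications {\bf (S1)}--{\bf (S3)} are negligible. Since the excerpt explicitly defers this to the companion paper \cite{EK4}, I will sketch the proof strategy that such a bound would require.

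\textbf{Overview of the approach.} The plan is to track each of the three simplifications separately and bound the discrepancy it introduces. Recall that {\bf (S1)} discards all partitions $\Pi \in \fra P(E(\cal C)) \setminus \fra M(E(\cal C))$ that contain a block of size $\geq 3$; {\bf (S2)} enlarges the indicator $\ind{P(\f x) = \Pi}$ to $\prod_{\{e,e'\} \in \Pi} \ind{[x_e] = [x_{e'}]}$, i.e.\ it ignores the constraint that labels attached to \emph{different} bridges be distinct; and {\bf (S3)} drops the nonbacktracking indicator $I(\f x)$ after restricting to $\fra R$. In each case, the error term is itself a sum over graphs of the same type as in \eqref{F in terms of skeletons}, but with some structural defect: a high-degree block, or a coincidence of labels on edges that are not paired, or a violated nonbacktracking constraint. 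The key point is that each such defect costs an extra power of $M^{-1}$ in the entropy/size competition described at the end of Section \ref{sec:large_Sigma}, which is more than enough to beat the target $N M^{-1-c_0} R_2(\omega + \eta)$.

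\textbf{Key steps in order.} First, for {\bf (S1)}: group the non-pairing partitions by their skeleton after collapsing ladders, exactly as in Section \ref{sec_42}, but now some skeleton ``bridges'' are in fact blocks of size $b \geq 3$. Such a block identifies $b$ pairs of unordered labels, so the associated Fourier coefficient involves $S$-matrix products with more constraints; crucially, a block of size $b$ on a chain of length $n$ removes at least one independent label compared with what a pairing would give, improving the 2/3-rule bound \eqref{2/3_rule_sketch} by a constant. Combined with the moment assumption \eqref{moment of f} on $f$ (needed because non-Gaussian moments of $A_{xy}$ now appear, but these are $O(1)$ since $\abs{A_{xy}}=1$), one re-runs the large-skeleton estimate of Proposition \ref{prop:large_Sigma} and the small-skeleton estimate of Proposition \ref{prop:small_Sigma} with one extra factor of $M^{-1/3+C\delta}$, yielding a bound $\leq C N M^{-1} M^{-c_0} R_2(\omega+\eta)$. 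Second, for {\bf (S2)}: by inclusion-exclusion, the error is a sum over pairings $\Pi$ together with a nonempty collection of ``forced coincidences'' $[x_e]=[x_{e'}]$ for $\{e,e'\}$ in \emph{different} bridges. Each such forced coincidence identifies two blocks of $Q(\Pi)$, hence reduces $\abs{Q(\Sigma)}$ by one relative to the skeleton count, again producing an extra $M^{-1}$ per coincidence after the $\f y$-summation over the spanning tree; the combinatorial cost (choosing which edges coincide) is bounded as in \eqref{comb_bound}. Third, for {\bf (S3)}: the error is the sum over $\Pi \in \fra R$ of the contribution with $I(\f x)$ replaced by $1 - I(\f x)$, i.e.\ the sum is restricted to label configurations violating at least one nonbacktracking constraint $\ind{x_i \neq x_j}$ with $\dist(i,j)=2$. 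Forcing $x_i = x_j$ for a distance-two pair merges two blocks of $Q(\Sigma)$ (note: since $\Pi \in \fra R$ this was not already forced), once more gaining a factor $M^{-1}$ in the entropy count; one then repeats the spanning-tree summation verbatim. In all three cases the final geometric-series summation over multiplicities $\f b$ is controlled using \eqref{bound on gamma} and, where cancellations are needed, \eqref{edge_bounds}/Lemma \ref{lem: bounds on S}, exactly as in Section \ref{proof:small_Sigma}.

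\textbf{Main obstacle.} The hard part will be the {\bf (S2)} error in the regime of \emph{small} skeletons, because there one cannot afford the brutal estimate \eqref{cal_V_estimate}: the oscillations in the $\f b$-summation must be exploited just as for the dumbbells, but now on a graph whose $Q$-structure has been deformed by the forced coincidences, so the resolvent bounds \eqref{domestic_bound}--\eqref{connecting_bound} have to be re-derived on the deformed associated graph $\cal Y$. One must check that merging two $Q$-blocks never turns a domestic bridge into a connecting one in a way that would spoil the phase separation $\abs{1-\alpha}\geq c$ for domestic bridges; the nonbacktracking structure preserved in $\fra R$ is what guarantees this. Once that structural stability is established, the counting arguments of Lemmas \ref{lem:saturated2/3} and \ref{lem:non_saturated2/3} apply with the extra $M^{-1}$ in hand, and summing over the finitely many small skeletons and the $O(1)$-many coincidence patterns completes the bound. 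Combining the three contributions and absorbing the constants gives $\abs{\cal E} \leq C N M^{-1-c_0} R_2(\omega + \eta)$ for a suitable $c_0 > 0$, as claimed; the details are carried out in \cite{EK4}.
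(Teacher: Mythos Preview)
The paper's own proof of this proposition is a bare citation: it invokes Propositions 4.5, 4.6, and 4.15 of the companion paper \cite{EK4} and nothing more. You correctly recognize this deferral and instead offer a sketch of the strategy those propositions presumably implement. Your high-level picture --- that each simplification introduces a structural defect (a block of size $\geq 3$, a forced label coincidence between edges in different bridges, or a violated nonbacktracking constraint) which removes at least one free summation label and hence gains a power of $M^{-1}$ over the leading skeleton bound --- is the right intuition and is consistent with the size/entropy power counting of Sections \ref{sec:large_Sigma}--\ref{proof:small_Sigma}. In that sense your proposal and the paper agree: both point to \cite{EK4}.

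One point in your sketch is off. In the ``main obstacle'' paragraph you worry that merging two $Q$-blocks might ``turn a domestic bridge into a connecting one'' and spoil the phase bound $\abs{1-\alpha}\geq c$. That cannot happen: by Definition \ref{def_split_Sigma} the domestic/connecting classification of a bridge $\sigma = \{e,e'\}$ depends only on which chains $\cal C_1, \cal C_2$ the edges $e,e'$ lie in, hence on the pairing $\Sigma$ alone, not on $Q(\Sigma)$ or on any label coincidences. The phase $\alpha$ attached to each bridge in the geometric series over $\f b$ is therefore unaffected by merging $Q$-blocks; what changes is only the graph $\cal Y$ on which the spanning-tree summation runs. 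The genuine difficulty for {\bf (S2)} and {\bf (S3)} in the small-skeleton regime is rather that a forced coincidence can effectively produce, after the ladder collapse, a decorated skeleton whose combinatorial structure (e.g.\ which bridges are parallel, which blocks have size two) differs from anything in $\fra S$, so the resummation and the counting Lemmas \ref{lem:saturated2/3}--\ref{lem:non_saturated2/3} have to be redone on a larger class of graphs. The paper itself warns (end of Section \ref{sec:struc} and beginning of Section \ref{sec: main argument}) that these steps require ``significant additional efforts'' that are ``largely unrelated'' to the argument here, so your expectation that one simply re-runs Propositions \ref{prop:large_Sigma} and \ref{prop:small_Sigma} with one extra $M^{-1/3}$ is optimistic; the actual work in \cite{EK4} is more involved than your sketch suggests.
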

\begin{proof}
This is an immediate consequence of Propositions 4.5, 4.6, and 4.15 
 in \cite{EK4}.
\end{proof}
Combining Propositions \ref{prop:large_Sigma}, \ref{prop:small_Sigma}, \ref{prop:small_Sigma_c1}, and \ref{prop:calE} yields
\begin{equation*}
\wt F^\eta(E_1, E_2) \;=\; \cal V_{\rm{main}} + \frac{N}{M} \pbb{O \pB{M^{-1} + M^{-c_0} R_2(\omega + \eta) } + O_q(N M^{-q})}\,.
\end{equation*}
Together with Proposition \ref{prop: leading term}, this concludes the proof of Proposition \ref{prop: main}. 

 Using \eqref{F - wt F}, \eqref{LW_assump}, and Proposition \ref{prop: main} we therefore get, for $H$ as in Section \ref{sec: setup},
\begin{equation} \label{F_eta_main}
F^\eta(E_1, E_2) \;=\; \cal V_{\rm{main}} + \frac{N}{M} \pbb{O \pB{M^{-1} + M^{-c_0} R_2(\omega + \eta) } + O_q(N M^{-q})}\,.
\end{equation}

In order to compute the left-hand side of \eqref{EY_result}, and hence conclude the proof of Theorems \ref{thm: main result}--\ref{thm: Theta 1}, we need to control the denominator of \eqref{EY_result} using the following result.

\begin{lemma} \label{lem:EY}
For $E \in [-1+ \kappa, 1 - \kappa]$ we have
\begin{equation}\label{semicir}
\E \, Y^\eta_\phi(E) \;=\; 4 \sqrt{1 - E^2} + O(\eta) \; = \; 2\pi\nu(E) + O(\eta) \,.
\end{equation}
\end{lemma}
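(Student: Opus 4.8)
\textbf{Proof plan for Lemma \ref{lem:EY}.}

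The plan is to reduce the computation of $\E\, Y^\eta_\phi(E)$ to the local semicircle law for band matrices, which controls the expected Stieltjes transform $\E\, m(z) \deq \E\, N^{-1}\tr (H/2 - z)^{-1}$ with error terms that are uniform down to spectral scales $\im z \gg 1/N$; since here $\eta = M^{-\rho} \gg 1/N$, we are comfortably in the regime where the local law applies. First I would treat the Cauchy kernel case \textbf{(C1)} directly: there $\phi^\eta(H/2 - E) = \eta^{-1}\phi(\eta^{-1}(H/2-E))$ and $\frac{1}{N}\tr\phi^\eta(H/2-E) = 2\im\, m(E + \ii\eta)$, using $\phi(x) = \im\frac{2}{x-\ii}$ so that $\eta^{-1}\phi(\eta^{-1}x) = \im\frac{2}{x - \ii\eta}$. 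Taking expectations and invoking the local semicircle law (e.g.\ from \cite{EKYY3} or \cite{EKYY4}), we get $\E\, m(E+\ii\eta) = m_{\rm sc}(E+\ii\eta) + O(\ldots)$, where $m_{\rm sc}$ is the Stieltjes transform of the semicircle law of density $\nu$. Since $\im\, m_{\rm sc}(E+\ii\eta) = \pi\nu(E) + O(\eta)$ for $E$ in the bulk $[-1+\kappa, 1-\kappa]$ (an elementary computation with the explicit formula for $m_{\rm sc}$, using that the semicircle density is Lipschitz away from the edges), this yields $\E\, Y^\eta_\phi(E) = 2\pi\nu(E) + O(\eta)$, which is \eqref{semicir} after recalling $\nu(E) = \frac{2}{\pi}\sqrt{1-E^2}$.

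For the general case \textbf{(C2)}, I would write $\phi^\eta(H/2-E) = \int \dd t\, \wh\phi(\eta t)\, \ee^{-\ii t(H/2-E)}$ and instead use the representation $\phi = \int \mu(\dd s)\, \phi_s$ where $\phi_s(x) = s^{-1}\im\frac{2}{s^{-1}x - \ii} = \im\frac{2}{x - \ii s}$ is a rescaled Cauchy kernel — more concretely, since $\phi$ is smooth with rapid decay, one can represent $\phi^\eta(H/2-E)$ as a superposition of resolvents at various scales via the Helffer–Sj\"ostrand formula (or simply by writing $\phi^\eta$ as an integral of Poisson kernels). Then $\frac1N\tr\phi^\eta(H/2-E)$ becomes an integral against $\im\, m$ at spectral scales $\gtrsim\eta$, and again the local law gives $\E$ of this equal to the same integral against $\im\, m_{\rm sc}$, up to errors that are $o(1)$ and in fact $O(\eta)$ after using the bulk regularity of $\nu$; the leading term is $\int \phi^\eta(x - E)\,\pi\nu(x)\,\dd x$, which by a Taylor expansion of the Lipschitz (in fact smooth) density $\nu$ around $E$ and $\int\phi = 2\pi$ equals $2\pi\nu(E) + O(\eta)$ (the $O(\eta)$ rather than $O(\eta^2)$ comes from the contribution of the tails of $\phi^\eta$, which are integrable but not compactly supported; the moments of $\phi$ need not vanish, so one only gets first-order accuracy unless $\phi$ is chosen symmetric, but $O(\eta)$ suffices).

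The main obstacle — really the only point requiring care — is bookkeeping the error terms in the local semicircle law and checking that, after integrating $\im\, m - \im\, m_{\rm sc}$ against the test function $\phi^\eta$, they are genuinely $O(\eta)$ rather than merely $o(1)$. The local law for band matrices typically gives a bound of the form $|\E\, m(E+\ii s) - m_{\rm sc}(E+\ii s)| \lesssim \frac{1}{Ns} + \frac{1}{(Ms)}$ or similar in the bulk (the precise form depending on the reference invoked); since $s\gtrsim\eta = M^{-\rho}$ with $\rho < 1/3 < 1$ and $N = L^d \gg M = W^d$, both error terms are much smaller than $\eta$, so integrating them against $\phi^\eta$ (whose $L^1$ norm is $O(1)$) still leaves an error $\ll \eta$, hence absorbable into the stated $O(\eta)$. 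For case \textbf{(C1)} this is immediate; for \textbf{(C2)} one needs the error bound to hold uniformly over the range of scales $s\in[\eta,\infty)$ appearing in the resolvent representation, which is standard. I do not expect this to be delicate, only tedious, so I would cite the relevant local law and state the estimate, omitting the routine verification.
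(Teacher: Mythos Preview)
Your proposal is correct and follows essentially the same approach as the paper: for case \textbf{(C1)} the paper also writes $Y^\eta_\phi(E)$ directly as $2\im m(E+\ii\eta)$ and invokes the local semicircle law from \cite{EKYY4}, and for case \textbf{(C2)} it likewise passes through the Helffer--Sj\"ostrand representation (after a preliminary truncation of the tails of $\phi$) to reduce to resolvent estimates. The only minor difference is that the paper quotes the local law error as $O(M^{-2/3+c})$ rather than the $(Ns)^{-1}+(Ms)^{-1}$ form you wrote, but either suffices since both are $\ll\eta$.
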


\begin{proof}
In the case \textbf{(C1)} we have
\begin{equation*}
\E \, Y^\eta_\phi(E) \;=\; \E \, \frac{1}{N} \tr \phi^\eta(H/2 - E) \;=\; \E \frac{1}{N} \im \tr \frac{4}{H - 2 (E + \ii \eta)} \;=\; 4 \im m(2E + 2 \ii \eta) + O(M^{-2/3 + c})
\end{equation*}
for any $c > 0$. Here in the last step we introduced the Stieltjes transform of the semicircle law, $m(z)$, and invoked \cite[Theorem 2.3 and Equation (7.6)]{EKYY4}. The claim then follows from the estimate
$4 \im m(2 E + 2 \ii \eta) = 4 \sqrt{1 - E^2} + O(\eta)$, 
which itself follows from \cite[Equations (3.3) and (3.5)]{EKYY3}.

In the case \textbf{(C2)}, we first split $\phi^\eta = \phi^{\leq, \eta} + \phi^{>, \eta}$ as in \eqref{phi_cutoff1}. The contribution of $\phi^{>, \eta}$ is small by the strong decay of $\phi$. The error in the main term,
\begin{equation*}
\E \, \frac{1}{N} \im \tr \phi^{\leq,\eta}(H/2 - E) - \frac{1}{2\pi}\int_{-2}^2 \dd x \, \sqrt{4 - x^2} \, \phi^{\leq, \eta}(x/2 - E)\,,
\end{equation*}
may be estimated using \cite[Theorem 2.3]{EKYY4} and Helffer-Sj\"ostrand functional calculus, as in e.g.\ \cite[Section 7.1]{EKYY4}; we omit the details. Then the claim follows from $\frac{1}{2\pi}\int_{-2}^2 \dd x \, \sqrt{4 - x^2} \, \phi^{\leq, \eta}(x/2 - E) = 4 \sqrt{1 - E^2} + O(\eta)$.
\end{proof}

Now we define
\begin{equation} \label{def:theta}
\Theta_{\phi_1,\phi_2}^\eta(E_1, E_2) \;\deq\; \frac{(LW)^d}{N^2} \, \frac{\cal V_{\mathrm{main}}}{\E Y^\eta_{\phi_1}(E_1) \E Y^\eta_{\phi_2}(E_2)}\,.
\end{equation}
Then Theorems \ref{thm: Theta 2} and \ref{thm: Theta 1} follow from Lemma \ref{lem:EY} and \eqref{F_eta_main}, 
recalling \eqref{Dconst} and \eqref{Qconst}. 
 Moreover, Theorem \ref{thm: main result} follows from \eqref{ThetatoF} and Lemma \ref{lem:EY}.
This concludes the proof of Theorems \ref{thm: main result}--\ref{thm: Theta 1} under the simplifications {\bf (S1)}--{\bf (S3)}.

\section{The real symmetric case ($\beta = 1$)} \label{sec:sym}

In this section we explain the changes needed to the arguments of Section \ref{sec: main argument} to prove Theorems \ref{thm: main result}, \ref{thm: Theta 2}, and \ref{thm: Theta 1} for $\beta = 1$ instead of $\beta = 2$. 
The  difference is that for $\beta = 1$ we have $\E H_{xy}^2 = S_{xy}$, while for $\beta = 2$ we have $\E H_{xy}^2 = 0$ (in addition to $\E H_{xy} H_{yx} = \E \abs{H_{xy}}^2 = S_{xy}$, which is valid in both cases).
 This leads to additional terms for $\beta = 1$, which may be conveniently tracked in our graphical notation by introducing \emph{twisted bridges}, in analogy to Section 9 of \cite{EK1}. As it turns out, allowing twisted bridges results in eight new dumbbell skeletons, called $\wt D_1, \dots, \wt D_8$ below, each of which has the same value $\cal V(\cdot)$ as its counterpart without a tilde. 
Hence, for $\beta = 1$  the leading term  is simply twice the leading term of $\beta = 2$, 
 which accounts for the trivial prefactor $2/\beta$ in the
final formulas.
Any other skeleton may be estimated by a trivial modification of the argument from Sections \ref{sec:large_Sigma}--\ref{proof:small_Sigma}.
As in Section \ref{sec: main argument}, we make the simplifications {\bf(S1)}--{\bf(S3)}, and do not deal with the errors terms $\cal E$ resulting from them. They are handled in \cite{EK4}.

We now give a more precise account of the proof for $\beta = 1$. We start from \eqref{sum over pairings after simplification}, which remains unchanged. Since $\E H_{xy} H_{xy} = \E H_{xy} H_{yx} = S_{xy}$, \eqref{sum over pairings after simplification 2} holds for $\beta = 1$ without the indicator function $\ind{x_e \neq x_{e'}}$ that was present for $\beta = 2$. Hence \eqref{sum over pairings after simplification 3} also holds without the indicator function $\ind{x_e \neq x_{e'}}$. We now write
\begin{equation*}
\ind{[x_e] = [x_{e'}]} \;=\; \ind{[x_e] = [x_{e'}]} \ind{x_e \neq x_{e'}} + \ind{x_e = x_{e'}} \;\eqd\; J_{\{e,e'\}}(\f x) + \wt J_{\{e,e'\}}(\f x)\,,
\end{equation*}
in self-explanatory notation (recall that $J_{\{e,e'\}}(\f x)$ was already defined in \eqref{def_I_sigma}). Thus \eqref{expansion in terms of P S} becomes
\begin{equation} \label{expansion for beta1}
\avgb{\tr H^{(n_1)}\,; \tr H^{(n_2)}} \;=\; \sum_{\Pi \in \fra M_c(E(\cal C))} \sum_{\f x} I(\f x) \pBB{\prod_{\{e, e'\} \in \Pi} \pb{J_{\{e,e'\}}(\f x) + \wt J_{\{e,e'\}}(\f x)} \, S_{x_e}} + \cal E\,.
\end{equation}
Multiplying out the parentheses in \eqref{expansion for beta1} yields $2^{\abs{\Pi}}$ terms, each of which is characterized by the set of bridges of $\Pi$ associated with a factor $J$; the other bridges are associated with a factor $\wt J$. We call the former \emph{straight bridges} and the latter \emph{twisted bridges}. 
This terminology originates from the fact that a twisted bridge forces the labels of the adjacent vertices to
coincide on opposite sides of the bridge; see Figure \ref{fig:twisted bridges} for an illustration. 
\begin{figure}[ht!]
\begin{center}
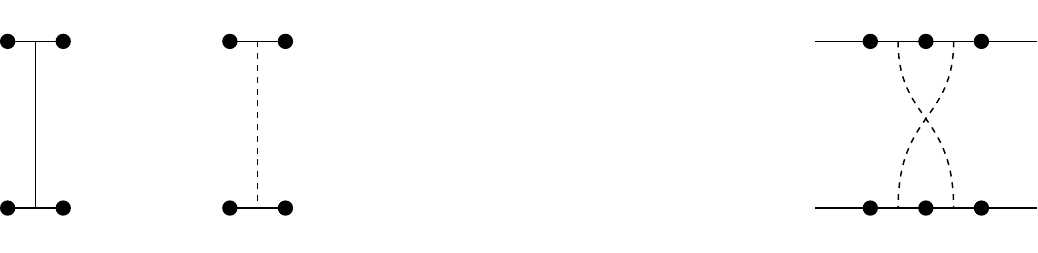
\end{center}
\caption{Left picture: a straight bridge (left) and a twisted bridge (right); labels with the same name are forced to coincide by the bridge. Right picture:
two antiparallel twisted bridges, which form an antiladder of size two.
\label{fig:twisted bridges}} 
\end{figure}
More formally, we assign to each bridge of $\Pi$ a binary tag, \emph{straight} or \emph{twisted}. We represent straight bridges (as before) by solid lines and twisted bridges by dashed lines.

Next, we extend the definition of skeletons from Section \ref{sec_42} to pairings containing twisted bridges. Recall that the key observation behind the definition of a skeleton was that parallel straight bridges yield a large contribution but a small combinatorial complexity. Now \emph{antiparallel} twisted bridges behave analogously, whereby two bridges $\{e_1, e_1'\}$ and $\{e_2, e_2'\}$ are \emph{antiparallel} if $b(e_1) = a(e_2)$ and $b(e_1') = a(e_2')$.
(Recall that they are parallel if $b(e_1) = a(e_2)$ and $b(e_2') = a(e_1')$.) See Figure \ref{fig:twisted bridges} for an illustration. An \emph{antiladder} is a sequence of bridges such that two consecutive bridges are antiparallel. All of Section \ref{sec_41}, in particular the partition $Q(\Pi)$, may now be taken over with trivial modifications.

As in Section \ref{sec_42}, to each tagged pairing $\Pi$ we assign a tagged skeleton $\Sigma$ with associated multiplicities $\f b$. The skeleton $\Sigma$ is obtained from $\Pi$ by successively collapsing \emph{parallel straight bridges} and \emph{antiparallel twisted bridges} until none remains. 
 Parallel twisted bridges and antiparallel straight bridges remain unaltered. The skeleton $\Sigma$ inherits the tagging of its bridges in the natural way: two parallel straight bridges are collapsed into a single straight bridge, and two antiparallel twisted bridges are collapsed into a single twisted bridge. See Figure \ref{fig: twisted skeleton} for an illustration.
\begin{figure}[ht!]
\begin{center}
\includegraphics{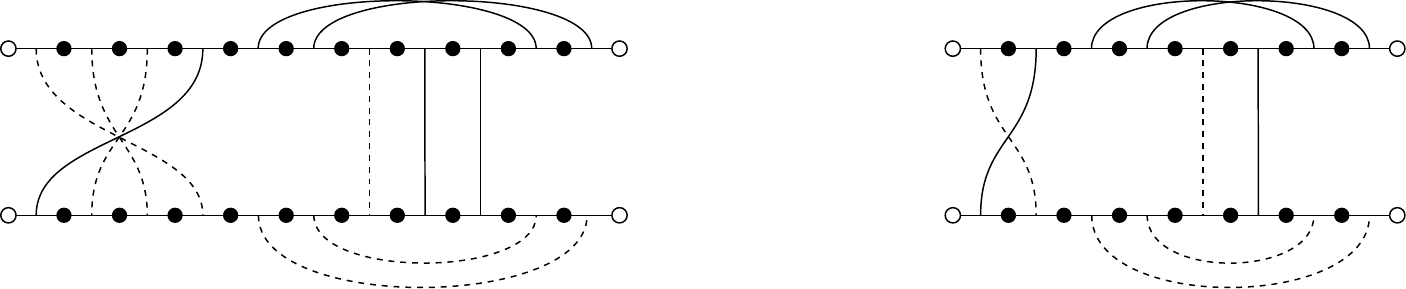}
\end{center}
\caption{A tagged pairing (left) and its tagged skeleton (right). \label{fig: twisted skeleton}} 
\end{figure}
We take over all notions from Section \ref{sec_42}, such as $\cal V(\cdot)$, with the appropriate straightforward modifications for tagged skeletons.

Allowing twisted bridges leads to a further eight skeleton graphs, which we denote by $\wt D_1, \dots, \wt D_8$, 
whose contribution is of leading order. They are the same graphs as $D_1, \dots, D_8$
from Figure \ref{fig: dumbbell}, except that the (one or two) vertical antiparallel straight bridges 
(depicted by solid lines) are replaced with the same number of vertical parallel twisted bridges (depicted by dashed lines).
We use the notations
\begin{equation*}
\cal V_{\rm{main}} \;\deq\; \sum_{i = 1}^8 \cal V(D_i) \,, \qquad \wt {\cal V}_{\rm{main}} \;\deq\; \sum_{i = 1}^8 \cal V(\wt D_i)\,.
\end{equation*}
We record the following simple result, whose proof is immediate.
\begin{lemma}
If $\beta = 1$ then for $i = 1, \dots, 8$ we have $\cal V(D_i) = \cal V(\wt D_i)$.
\end{lemma}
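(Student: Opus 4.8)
The claim is that for $\beta = 1$, each dumbbell skeleton $D_i$ with one or two vertical antiparallel straight bridges has the same value $\cal V(\cdot)$ as its tilded counterpart $\wt D_i$, in which those vertical bridges are replaced by the same number of parallel twisted bridges. The plan is to trace through the definition \eqref{def of V Sigma} of $\cal V(\Sigma)$ and check that replacing an antiparallel straight bridge pair by a parallel twisted bridge pair does not change the minimal vertex partition $Q(\Sigma)$, the set of admissible multiplicities $B(\Sigma)$, nor the resulting sum over labels.

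First I would recall the label constraints imposed by the two kinds of bridge. A straight bridge $\{e,e'\}$ forces $x_{a(e)} = x_{b(e')}$ and $x_{a(e')} = x_{b(e)}$; a twisted bridge forces $x_{a(e)} = x_{a(e')}$ and $x_{b(e)} = x_{b(e')}$ (see Figure \ref{fig:twisted bridges} and the discussion of $J$ and $\wt J$). The key combinatorial observation is that within a dumbbell, the vertical bridges always connect the two "rungs" of the loop, and their two endpoints on each side are the distinguished vertices $y_3, y_4$ in the labelling of Figure \ref{fig: D8}. When one has a pair of \emph{parallel} twisted bridges versus a pair of \emph{antiparallel} straight bridges joining the same pairs of rung-vertices, a direct check shows the induced identifications on $V(\Sigma)$ coincide: in both cases the four endpoints collapse to the two blocks corresponding to $y_3$ and $y_4$. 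Hence $Q(D_i) = Q(\wt D_i)$ as partitions of (canonically identified) vertex sets, and consequently the index sets $\fra R$, $\fra S$, and the multiplicity constraints $B(D_i) = B(\wt D_i)$ (in particular the exclusion \eqref{def_cal_A} of $b_3 + b_4 = 2$) are literally the same.

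Next I would observe that the analytic weights in \eqref{def of V Sigma} are insensitive to the straight/twisted distinction: the factors $2\re(\wt\gamma_{n_1}(E_1,\phi_1))$ and $2\re(\wt\gamma_{n_2}(E_2,\phi_2))$ depend only on the edge-counts $n_1(\Sigma,\f b), n_2(\Sigma,\f b)$, which are determined by the multiplicities $\f b$ and by which chain $\cal C_i$ each bridge lies in — and a twisted bridge occupies exactly the same two edges as the straight bridge it replaces, one in each chain for the connecting case. Likewise the product $\prod_{\{e,e'\}\in\Sigma}(S^{b_{\{e,e'\}}})_{x_e}$ contributes the entry $(S^b)_{yz}$ regardless of the tag, since $S$ is symmetric ($S_{yz} = S_{zy}$): for a straight bridge the entry is $(S^b)_{x_{a(e)} x_{b(e)}}$ and for the corresponding twisted bridge it is $(S^b)_{x_{a(e)} x_{a(e')}}$, but after imposing the (identical) partition $Q$ these are the same number. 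Summing out the ladder labels via \eqref{intsum} then gives identical expressions for $\cal V(D_i)$ and $\cal V(\wt D_i)$.

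The only genuinely delicate point — and the part I would write out most carefully — is verifying that "parallel twisted" and "antiparallel straight" really do produce the same vertex identification for each of the eight dumbbell shapes, including the degenerate ones ($D_1,\dots,D_4$, where one of the two vertical bridges is absent, i.e.\ $b_4 = 0$). This requires keeping track of the orientation conventions on $\cal C_1,\cal C_2$ fixed in Figure \ref{fig: cal C}, since "parallel" and "antiparallel" are defined relative to these orientations, and the two chains run in opposite directions in the picture. I expect this to reduce to a short case check, exactly the computation that justifies why antiparallel twisted bridges play the role for $\beta=1$ that parallel straight bridges play for $\beta=2$ (as already noted in the paragraph preceding Figure \ref{fig: twisted skeleton}); once that correspondence is established at the level of a single bridge pair, the equality $\cal V(D_i) = \cal V(\wt D_i)$ follows from the preceding two paragraphs, and summing over $i$ gives $\wt{\cal V}_{\rm main} = \cal V_{\rm main}$, hence the factor $2$ in the $\beta = 1$ formulas.
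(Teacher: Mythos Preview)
Your proposal is correct and follows exactly the approach one would take to verify the claim: you unfold the definition \eqref{def of V Sigma} and check that the tagging change leaves $Q(\cdot)$, $B(\cdot)$, the edge-counts $n_i(\cdot,\f b)$, and the $S$-weights unchanged. The paper itself offers no argument at all beyond declaring the proof ``immediate'', so your write-up is in fact more detailed than what appears there; the underlying reasoning is the same elementary verification you describe.
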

For $\beta = 1$ we may therefore write
$\cal V_{\rm{main}} + \wt{\cal V}_{\rm{main}} = 2 \cal V_{\rm{main}}$. 
Thus, the main term for $\beta = 1$ is simply twice the main term for $\beta = 2$.

What remains is the estimate of $\cal V(\Sigma)$ for $\Sigma \notin \{D_1, \dots, D_8, \wt D_1, \dots, \wt D_8\}$. We proceed exactly as in Sections \ref{sec:large_Sigma}--\ref{proof:small_Sigma}. The key observation is that the 2/3-rule from Lemma \ref{lem:2/3_rule} remains true thanks to the definition of skeletons. When estimating the large skeletons (without making use of oscillations) in Section \ref{sec:large_Sigma}, we get an extra factor $2^m$ to the left-hand side of \eqref{comb_bound} arising from the sum over all possible taggings of a skeleton; this factor is clearly immaterial. Finally, the argument of Sections \ref{sec:small_skeletons} and \ref{proof:small_Sigma} may be taken over with merely cosmetic changes.  The set $\Sigma_d^1$ from Definition \ref{def_S^1} remains unchanged, and in particular only contains straight bridges. Note that the basic graph-theoretic argument from Lemmas \ref{lem:saturated2/3} and \ref{lem:non_saturated2/3} remains unchanged. In particular, exactly as in the proof of Lemma \ref{lem:saturated2/3}, if all blocks of $Q(\Sigma)$ have size three and $\Sigma$ is not a dumbbell skeleton, then $\Sigma$ contains a domestic bridge (which may be straight or twisted). 
 This concludes the proof of Theorems \ref{thm: main result}--\ref{thm: Theta 1} for the case $\beta = 1$ under the simplifications {\bf (S1)}--{\bf (S3)}.

{\small
\providecommand{\bysame}{\leavevmode\hbox to3em{\hrulefill}\thinspace}
\providecommand{\MR}{\relax\ifhmode\unskip\space\fi MR }
\providecommand{\MRhref}[2]{%
  \href{http://www.ams.org/mathscinet-getitem?mr=#1}{#2}
}
\providecommand{\href}[2]{#2}

}

\end{document}